\newcommand {\C} {{\mathbb C}}
\newcommand {\R} {{\mathbb R}}
\newcommand {\Z} {{\mathbb Z}}
\newcommand {\Q} {{\mathbb Q}}
\newcommand {\dt} {\bullet}
\newcommand {\F} {{\mathcal F}}
\newcommand {\OO} {{\mathcal O}}
\DeclareMathOperator{\im}{im}
\newtheorem{thm}[subsection]{Theorem}
\newtheorem{cor}[subsection]{Corollary}
\newtheorem{lemma}[subsection]{Lemma}
\newtheorem{prop}[subsection]{Proposition}
\newtheorem{rmk}[subsection]{Remark}
\newtheorem{ex}[subsection]{Example}
\newtheorem{conj}[subsection]{Conjecture}
\numberwithin{equation}{section}
\begin{document}

%%%%%%%%%%%%%%%%%%%%%%

\title{Euler characteristics of Koll\'ar-hyperbolic varieties }

\author{Donu Arapura}
\thanks{Author supported by a grant from the Simons foundation}
\address{Department of Mathematics\\ Purdue University\\
West Lafayette, IN 47907-2067}
\email{arapura@purdue.edu}

\maketitle

\begin{abstract}
 Call a normal complex  projective variety $X$  Koll\'ar-hyperbolic   if   any nonconstant map from a  smooth projective curve to $X$ induces a nontrivial homomorphism of \'etale fundamental groups.  Examples include (a) smooth varieties with finite Albanese  map, (b) normalizations of subvarieties
 of hermitian locally symmetric varieties of noncompact type, and (c) higher dimensional Kodaira fibrations. We conjecture that
 Koll\'ar-hyperbolic varieties
satisfy a vanishing theorem, which says roughly that if $P$ is perverse sheaf underlying  a mixed Hodge module
on such a variety then the limit of  normalized  dimensions of the cohomology groups of $P$ are zero in nonzero degrees, where the limit is taken over a suitable
tower of \'etale covers. We call such varieties $V$-hyperbolic. $V$-hyperbolic varieties satisfy a Gromov type vanishing theorem for $L^2$ cohomology, the inequalities
$(-1)^d\chi(X) \ge 0$ and $(-1)^{d-p}\chi(\Omega_X^p)\ge 0$ in the smooth case, and more generally,  an  inequality for mixed Hodge modules conjectured under related assumptions by Maxim, Wang and the author. We prove that examples of type (a) and (c) listed above are $V$-hyperbolic.
\end{abstract}

In short,  the initial motivation for this  paper  was to understand  some deeper reasons for why a conjecture of  Maxim, Wang and the author \cite{amw} ought to be true. 
Before saying more about this,
we need to explain a few notions.
We will call a normal complex projective variety  $X$ {\em Koll\'ar-hyperbolic}   if   any nonconstant map from a  smooth projective curve to $X$ induces a non trivial homomorphism
of \'etale fundamental groups, i.e. of  profinite completions of the usual fundamental groups. This notion (in an equivalent formulation) was introduced by Koll\'ar \cite{kollar, kollar2} under the  name of  ``varieties with large algebraic fundamental group". We prefer the present term, because it is more succinct  and
 it also happens to sound similar to closely related term. The class of Koll\'ar-hyperbolic varieties is fairly natural, and it  includes:
\begin{enumerate}
 \item[(a)] Smooth projective varieties whose Albanese map is finite over its image, 
 \item[(b)] normalizations of subvarieties of hermitian locally symmetric spaces of noncompact type, and
 \item[(c)] Kodaira fibrations (defined inductively as projective manifolds admitting a smooth projective map to a curve of positive genus with Kodaira fibrations as fibres).
\end{enumerate}
%If $X$ is a smooth Koll\'ar-hyperbolic $d$-dimensional variety then we will prove that for any $p$, 
% $(-1)^{d-p}\chi(\Omega_X^p)\ge 0$, and consequently that the topological Euler characteristic satisfies $(-1)^d \chi(X)\ge 0$.
%In fact, we prove a much stronger inequality for Euler characterisitcs of mixed Hodge modules conjectured, under related assumptions, by Maxim, Wang and the author \cite{amw}. We will say more about this below.

 Gromov introduced the related class of  K\"ahler-hyperbolic  manifolds in \cite{gromov}.
 We will recall the definition in the second section. For the moment, we just point out that a K\"ahler-hyperbolic
 manifold with residually finite fundamental group  is Koll\'ar-hyperbolic. Gromov proved that the $L^2$-cohomology of the universal cover of a compact K\"ahler-hyperbolic manifold vanishes in degrees different from $d=\dim X$. When combined with the Hodge decomposition and Atiyah's index theorem \cite{atiyah}, this implies the inequality $(-1)^{d-p}\chi(\Omega_X^p)\ge 0$ above.
 Loosely inspired by Gromov's theorem, we define a normal projective variety to be $V$-hyperbolic (V for ``vanishing") if it satisfies the following condition:
 There exists a subgroup  $H\subset \pi_1(X)$ of infinite index such  that 
for  any  tower $\pi_n:X_n\to X$ of conected \'etale covers with $\bigcap \pi_1(X_n)=H$, and any perverse sheaf $P$ underlying a mixed Hodge module,
$$\lim_{n\to \infty} \frac{\dim H^i(X_n, \pi_n^*P)}{\deg \pi_n} = 0$$
 for $i \not= 0$.  When $X$ is a smooth  $V$-hyperbolic variety of dimension $d$, and the above condition is applied to  $P= \Q[d]$,  we easily  deduce that $L^2$-cohomology
vanishes in degrees different from $d$. The key point is that in this case, L\"uck \cite{luck} proved  the limit appearing in the theorem  is an  $L^2$-Betti number.
One can then obtain the inequality$(-1)^{d-p}\chi(\Omega_X^p)\ge 0$ as above.
However, we give a different argument which  shows more generally that
\begin{equation}
  \label{eq:AMV}
\chi(Gr^p_F DR(M))\ge 0  
\end{equation}
for any mixed Hodge module $M$.
This is  in accordance with a conjecture of \cite{amw}. This can be spelled out
 in the simplest case where $M$ arises from a variation of Hodge structure on the complement
of a normal crossing divisor $D$ with unipotent local monodromy about it.
If    $( V, F^\dt)$ is the  filtered vector bundle associated to the  VHS, \eqref{eq:AMV} is equivalent to 
$$ \sum_k (-1)^{d-k}\chi(\Omega_X^k(\log D)\otimes  Gr_{ F}^{p-k}  V)\ge 0$$
 This  refines  the Arakelov inequalities of Eyssidieux \cite{eyss}, who proved this when $D=0$.
 We also obtain a topological corollary that if $P$ is a perverse sheaf associated to a Hodge module on a  Koll\'ar-hyperbolic variety, then  $\chi(X, P)\ge 0$.  
 In particular, $(-1)^d\chi(X)\ge 0$, when $X$ is smooth of dimension $d$. For related results in this direction, see the papers of
 Liu, Maxim, Wang \cite{lmw}, Wang and the author \cite{aw}, Patel and the author \cite{ap} and Deng and Wang \cite{dw}. However, these papers use  very different methods which do not imply  \eqref{eq:AMV}.
%As another  set of corollaries,  we deduce  various generic vanishing theorems. We obtain a special case of a generic vanishing theorem of Popa and Schnell \cite{ps}: if $X$ is smooth and the Albanese map $alb$ is finite, then  $H^i(X,Gr^p_F DR(M)\otimes L)=0$ for general $L\in Pic^0(X)$ and $i\not=0$. We also give  a refinement of the original generic vanishing theorem of Green and Lazarsfeld \cite{gl} that $H^i(X, \omega_X \otimes L)=0$ for $i> \dim X-\dim alb(X)$, where $X$ need not be Koll\'ar-hyperbolic. In our refinement, $\omega_X$ is replaced by the ``canonical sheaf" associated
%to a variation of Hodge structure.

%For an extremal value of $p$, $Gr^p_F DR(M)$ becomes a sheaf denoted by $S_X(M)$. 
%The canonical sheaf $\omega_X$ is an example of $S_X(M)$
% We also recover a refinement of the original generic vanishing theorem of Green and Lazarsfeld
%for extremal $p$ $H^i(X, S_X(M) \otimes L)=0$ for $i> \dim X-\dim alb(X)$.

 We show that $V$-hyperbolic varieties are Koll\'ar-hyperbolic, and we conjecture the converse.
Evidence is provided by the following corollaries of our main theorems, which show that two of three classes
of Koll\'ar-hyperbolic varieties listed above are in fact $V$-hyperbolic.

\begin{cor}\label{cor:intro1}
 Smooth projective varieties whose Albanese map is finite over its image are V-hyperbolic.
\end{cor}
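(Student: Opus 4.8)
The plan is to transport the whole question to the Albanese variety $A=\mathrm{Alb}(X)$ and there exploit the multiplication maps. Write $a\colon X\to A$ for the Albanese map; by hypothesis it is finite onto its image, and composing with the closed immersion of that image into $A$ we may regard $a$ itself as a finite morphism. Since a finite morphism is affine and proper, $a_*$ is exact for the perverse $t$-structure and carries mixed Hodge modules to mixed Hodge modules; hence for any perverse sheaf $P$ underlying an MHM the complex $Q:=a_*P$ is again a perverse sheaf underlying an MHM, supported on $a(X)\subset A$. For the tower I would take the multiplication maps $[m]\colon A\to A$, which form a cofinal inverse system on setting $m=n!$, and pull them back: $X_n:=X\times_{A,[m]}A$ with $m=n!$. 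On fundamental groups $a_*\colon\pi_1(X)\to\pi_1(A)=\Z^{2g}$ is surjective (the Albanese map induces $H_1(X,\Z)\twoheadrightarrow H_1(A,\Z)$), so these covers are connected, the limiting subgroup is $H=\ker a_*$, which has infinite index, and $\deg\pi_n=\deg[m]=m^{2g}$.

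The computation then collapses to a character sum. The maps $[m]$ are \'etale Galois with group the $m$-torsion $A[m]$, so $[m]_*\Q=\bigoplus_\chi L_\chi$, the sum running over the rank-one local systems $L_\chi$ attached to the characters $\chi\in\widehat{A[m]}$. Base change along the Cartesian square defining $X_n$ gives $\pi_{n*}\Q=a^*[m]_*\Q=\bigoplus_\chi a^*L_\chi$, whence by the projection formula
$$H^i(X_n,\pi_n^*P)=\bigoplus_{\chi\in\widehat{A[m]}}H^i\!\bigl(A,\,Q\otimes L_\chi\bigr).$$
To meet the definition I must control this limit for every tower converging to $H$, not only this one; here I would invoke the $L^2$ framework of the paper (Lück's approximation), which identifies the normalized limit as an invariant of $H$ alone, reducing me to the multiplication tower above.

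The crux is then generic vanishing for perverse sheaves on abelian varieties (Gabber--Loeser, Krämer--Weissauer, Schnell): for each $i\neq 0$ the cohomology support locus $S^i=\{\chi:H^i(A,Q\otimes L_\chi)\neq 0\}$ inside the character variety $\mathrm{Char}(A)\cong(\C^*)^{2g}$ is a finite union of torsion translates of proper subtori. Two elementary facts finish the argument: the dimensions $\dim H^i(A,Q\otimes L_\chi)$ are bounded uniformly in $\chi$, by constructibility of cohomology over $\mathrm{Char}(A)$; and a proper subtorus (dimension $\le 2g-1$) carries only $O(m^{2g-1})$ points of order dividing $m$. Hence $\dim H^i(X_n,\pi_n^*P)=\sum_{\chi\in S^i\cap\widehat{A[m]}}\dim H^i(A,Q\otimes L_\chi)=O(m^{2g-1})$, and dividing by $\deg\pi_n=m^{2g}$ and letting $m=n!\to\infty$ yields $0$ for every $i\neq 0$, which is exactly $V$-hyperbolicity.

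The main obstacle is concentrated in this last step: everything hinges on the structural generic-vanishing theorem guaranteeing that the exceptional characters lie on finitely many proper, torsion-translated subtori, for it is this positive codimension that renders the torsion points negligible after normalization. Checking that $Q=a_*P$ genuinely underlies a mixed Hodge module and that the uniform bound on the fibrewise cohomology holds are comparatively routine, as is the subtorus point-count; the one delicate point outside the cited vanishing theorem is the reduction, via Lück's approximation, from an arbitrary tower with $\bigcap\pi_1(X_n)=H$ to the single multiplication tower on which the character sum can be evaluated.
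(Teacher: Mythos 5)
Your fixed-tower computation is essentially correct, and it takes a genuinely different route from the paper on the abelian-variety step: the paper (theorem \ref{thm:av}) stays coherent, applying the Popa--Schnell generic vanishing theorem \ref{thm:ps} to $Gr^p_F DR(M)\otimes L$ for $L\in Pic^0(A)$ and then passing to the perverse sheaf via Saito's strictness theorem \ref{thm:strictness} (with Verdier duality for $i<0$), whereas you work directly on the topological side with character-variety generic vanishing for perverse sheaves (Kr\"amer--Weissauer/Schnell), which avoids strictness and duality altogether. That substitution is legitimate: any translate of a subtorus of $(\C^*)^{2g}$ containing a torsion character is a torsion translate, so even the weaker ``arbitrary translates'' form of the structure theorem suffices for your count, and the uniform bound on $\dim H^i(A,Q\otimes L_\chi)$ is indeed routine. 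However, there is a genuine gap, and you have located it yourself but patched it with a tool that does not exist: $V$-hyperbolicity quantifies over \emph{every} $H$-tower, and your reduction of an arbitrary tower to the multiplication tower $[n!]$ via ``L\"uck's approximation'' fails. L\"uck's theorem identifies $\lim h^k(X_n)/\deg\pi_n$ as an $L^2$-Betti number only for the constant sheaf $\Q[d]$ (and this is the only place the paper invokes it); there is no approximation theorem asserting that $\lim h^i(X_n,\pi_n^*P)/\deg\pi_n$ depends on $H$ alone for a general perverse sheaf underlying a mixed Hodge module. Concretely, an arbitrary $H$-tower corresponds to an arbitrary chain of finite-index subgroups $\Gamma^n\subset\Lambda=H_1(A,\Z)$ with $\bigcap\Gamma^n=0$, e.g.\ $\Gamma^n=d_1(n)\Z\oplus\cdots\oplus d_{2g}(n)\Z$ with the $d_i(n)$ growing at wildly different rates; such chains are not cofinally comparable to $n!\Lambda$, and your $O(m^{2g-1})/m^{2g}$ count says nothing about them.

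The paper closes exactly this hole with two ingredients your proposal omits, and both are where the real work lies. First, on $A$ itself, lemma \ref{lemma:lattice} (elementary divisor theory: for any chain with $\bigcap\Gamma^n=0$ and any quotient lattice $\Xi$ of smaller rank, $[\Xi:p(\Gamma^n)]/[\Lambda:\Gamma^n]\to 0$) combined with the Sarnak--Adams input in corollary \ref{cor:sa} controls $|T_n\cap W|/|G_n|$ for the character groups $T_n=Hom(\Lambda/\Gamma^n,U(1))$ of an \emph{arbitrary} tower, replacing your explicit count for $[m]$. Second, theorem \ref{thm:finVhyp}(a) transfers an arbitrary $G$-tower on $X$ (with $G$ the preimage of the trivial subgroup, i.e.\ $G=\ker[\hat\pi_1(X)\to\hat\pi_1(A)]$) down to an $H$-tower on $A$: one shows $\Gamma^n=a^{-1}(a(\Gamma^n))$ using the injectivity of $\hat\pi_1(X)/G\to\hat\pi_1(A)$, so the covers of $X$ are genuinely pulled back along a cartesian square, and proper base change gives $h^i(X_n,\pi_n^*P)=h^i(A_n,p_n^*a_*P)$. (Your use of $a_*$ exactness and the projection formula is the fixed-tower shadow of this.) As written, your argument proves vanishing only along towers pulled back from multiplication maps; it becomes a complete proof, by a route genuinely different from the paper's on the generic-vanishing side, once the L\"uck step is deleted and replaced by these two arguments.
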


\begin{cor}\label{cor:intro2}
Generalized Kodaira fibrations are  V-hyperbolic. (These are defined inductively as a Koll\'ar-hyperbolic varieties $X$ which admit  a surjective morphism
$f:X\to C$ to a curve of positive genus, such that normalizations of irreducible components of all fibres are generalized Kodaira fibrations.)
\end{cor}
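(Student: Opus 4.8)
The plan is to argue by induction on $d=\dim X$, using the defining fibration structure. For the base case $d=1$ a smooth projective curve $C$ of genus $g\geq 1$ has Abel--Jacobi map to its Jacobian which is a closed immersion, hence finite onto its image, so $C$ is $V$-hyperbolic by Corollary \ref{cor:intro1}. For the inductive step let $f\colon X\to C$ be the given surjection onto a curve of positive genus, and let $F$ denote the normalization of a general fibre, a generalized Kodaira fibration of smaller dimension which is therefore $V$-hyperbolic by the inductive hypothesis.

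The core is a relative statement: if $f\colon X\to C$ is proper and surjective onto a positive genus curve with $V$-hyperbolic general fibre and $X$ is Koll\'ar-hyperbolic, then $X$ is $V$-hyperbolic. Because $C$ has positive genus the image of $\pi_1(X)$ in $\pi_1(C)$ is infinite, leaving ample room to choose a subgroup $H\subset\pi_1(X)$ of infinite index. Granting the tower-independence of the limit, I would compute it on a convenient diagonal tower $\pi_n\colon X_n\to X$ that unwinds both base and fibres: $X_n$ fibres over an \'etale cover $C_n\to C$ with fibre an \'etale cover $F_n\to F$, so that $\deg\pi_n=\deg(C_n/C)\cdot\deg(F_n/F)$. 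Writing $f_n\colon X_n\to C_n$ for the induced map, one has $H^i(X_n,\pi_n^*P)=\mathbb{H}^i(C_n,Rf_{n*}\pi_n^*P)$, and the perverse Leray spectral sequence on the curve $C_n$ reads $E_2^{p,q}=H^p\bigl(C_n,{}^p\mathcal{H}^q(Rf_{n*}\pi_n^*P)\bigr)$ with $p\in\{-1,0,1\}$. Over the locus where $f$ is smooth, smooth base change identifies the generic stalk of ${}^p\mathcal{H}^q$ with the degree-$q$ perverse cohomology of the fibre $F_n$ with coefficients $\pi_n^*P|_{F_n}$.

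The estimate then rests on three inputs. First, by the inductive $V$-hyperbolicity of $F$ applied to the coefficients $P|_F$ and the fibre tower, the generic rank of ${}^p\mathcal{H}^q(Rf_{n*}\pi_n^*P)$ divided by $\deg(F_n/F)$ tends to $0$ for every $q\neq 0$, while for $q=0$ it stays bounded. Second, the Euler--Poincar\'e formula on $C_n$ bounds $\dim H^1$ of a local system by twice its genus times its rank, and $\dim H^0,\dim H^2$ by the rank alone. Third, for an \'etale cover $g(C_n)$ grows linearly in $\deg(C_n/C)$, so $g(C_n)/\deg(C_n/C)$ stays bounded. Dividing $\dim E_2^{p,q}$ by $\deg\pi_n$ and letting $n\to\infty$: the middle terms $p=0$ carry the genus factor, controlled by the bounded ratio $g(C_n)/\deg(C_n/C)$, times a rank negligible against $\deg(F_n/F)$ whenever $q\neq 0$; the outer terms $p=\pm 1$ have no genus factor and are negligible against $\deg(C_n/C)$. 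Hence every contribution to $H^i$ vanishes in the limit for $i\neq 0$, only the term $(p,q)=(0,0)$ surviving into $H^0$, which is precisely the required vanishing.

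The step I expect to be the main obstacle is the construction and comparison of towers realizing both the base and fibre unwinding simultaneously, and reconciling this with the universal quantifier over all towers with $\bigcap\pi_1(X_n)=H$. A prescribed finite cover of a single fibre need not be invariant under the monodromy of $f$ around $C$, so it need not globalize to an \'etale cover of $X$; making the fibre covers equivariant, or passing to a cofinal diagonal tower, is delicate, and to pass from the convenient tower to an arbitrary one with the given intersection I would invoke a L\"uck-type approximation for mixed Hodge modules among the paper's main results. A secondary technical point, handled separately, is the generalized case of singular fibres: smooth base change and the perverse Leray spectral sequence are unaffected, the general fibre is smooth so its $V$-hyperbolicity applies verbatim, and the finitely many special fibres contribute only boundary terms supported at finitely many points of $C$, negligible after normalizing by $\deg(C_n/C)\to\infty$.
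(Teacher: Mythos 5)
Your outline has the right global shape (induct on dimension, push down to the base curve, estimate perverse cohomologies on $C_n$ against genus growth), but two steps are genuinely broken. The first is your reduction to a ``convenient diagonal tower.'' V-hyperbolicity quantifies over \emph{every} $H$-tower, and the L\"uck-type approximation you propose to invoke to transfer the computation from your chosen tower to an arbitrary one does not exist in the paper or in the literature: L\"uck's theorem \cite{luck} is used only for constant coefficients $\Q[d]$ ($L^2$-Betti numbers), and tower-independence of the normalized limits for general $P\in Perv_{HM}$ is essentially the conjectured statement itself in disguise. The paper never needs it. In lemma \ref{lemma:tower}, an \emph{arbitrary} $H$-tower $X_n$, with groups $\Gamma^n$, automatically acquires the diagonal structure you want: the open subgroups $f(\Gamma^n)\subset\hat\pi_1(C)$ define base covers, Stein factorization produces $f_n:X_n\to C_n$ with $X_n\to X\times_C C_n$ \'etale of degree $e_n$, so $\deg\pi_n=d_ne_n$ with $d_n\to\infty$ and $e_n\to\infty$ forced by hypotheses (a) and (b) of theorem \ref{thm:key}. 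Those hypotheses (infinite index of $f(H)$ in $\hat\pi_1(C)$, and of the preimage of $H$ in $\hat\pi_1(X_y)$ for every fibre) are also what you would need to make precise your vague ``ample room to choose $H$''; Koll\'ar-hyperbolicity of $X$, via proposition \ref{prop:defkhyp}(b), is what supplies them.

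The second, more serious, gap is your treatment of singular fibres as ``boundary terms\ldots negligible after normalizing by $\deg(C_n/C)$.'' They are not. In the decomposition ${}^pR^kf_{n*}P_n = F_n^k\oplus S_n^k$, the skyscraper part $S_n^k$ sits at the $\sim d_n$ points of $C_n$ over each special point of $C$, and by lemma \ref{lemma:rbnd} its stalks are bounded by $h^k$ of the \emph{vanishing-cycle} Hodge module ${}^p\phi_t P_n$ on the degree-$e_n$ cover of the special fibre (similarly the generic rank of $F_n^k$ is $h^k$ of the nearby-cycle module on a fibre cover, not of $P|_F$ as in your smooth-base-change identification, which is only valid generically). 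After dividing by $\deg\pi_n=d_ne_n$, the $d_n$ cancels and what survives is $h^k(Y_{n,j},{}^p\phi_{t_j}P_n)/e_n$; these stalk dimensions grow like $e_n$ in general (e.g.\ if a special fibre contains a component with small fundamental group, over which the cover splits into disjoint copies), so the limit vanishes for $k<0$ only if the \emph{special} fibres themselves have the vanishing property applied to the new coefficients ${}^p\psi_t P$, ${}^p\phi_t P$. This is exactly why the inductive definition of generalized Kodaira fibration demands that normalizations of components of \emph{all} fibres be generalized Kodaira fibrations, why hypothesis (c) of theorem \ref{thm:key} quantifies over all $y\in C$, and why the key technical input of the paper's proof --- that ${}^p\psi_t$ and ${}^p\phi_t$ preserve $Perv_{HM}$ (Gabber, Saito) --- is indispensable and entirely absent from your proposal. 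Your relative statement assuming only a V-hyperbolic \emph{general} fibre, argued as you argue it, does not close the induction; the base case via corollary \ref{cor:intro1} and the curve-side genus estimates are fine.
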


Corollary \ref{cor:intro1} is deduced from theorem \ref{thm:av}, that abelian varieties are $V$-hyperbolic, and  theorem \ref{thm:finVhyp}, that the class of $V$-hyperbolic varieties is stable under branched covers.  Theorem \ref{thm:av}  is reduced to a generic vanishing theorem of Popa and Schnell \cite{ps}.
Corollary \ref{cor:intro2} follows from theorem \ref{thm:key}, that  if $f:X\to C$ is a morphism of a normal projective variety onto a curve of positive genus such that all fibres are V-hyperbolic, then $X$
 is V-hyperbolic. This proved using the decomposition theorem and the stability of Hodge modules under vanishing cycle functors.

I would like to thank J\'anos Koll\'ar and Botong Wang for pointing out some issues with a preliminary version of this paper.
 
\section{Perverse sheaves and Hodge modules}

This section is purely expository, and briefly summarizes some facts about perverse sheaves and Hodge modules needed below.
In this paper, algebraic varieties are  complex algebraic varieties endowed  with the complex topology. These are  irreducible unless stated otherwise.

Let $X$ be an algebraic  variety, and let $D_c^b(X)$ denote the bounded  derived category of sheaves of  $\Q$-vector spaces with constructible
cohomology on $X$.
We recall that the category of perverse sheaves $Perv(X)\subset D_c^b(X)$
is the full subcategory  consisting of objects $P$ satisfying
\begin{equation}\label{eq:perv}
 \dim \operatorname{supp} \mathcal{H}^{-i}(P)\le i,\quad \dim \operatorname{supp} \mathcal{H}^{-i}(DP)\le i
\end{equation}
where $DP$ is the Verdier dual \cite{bbd, htt}.
This is an artinian abelian category with an involution given by $D$. Some basic examples are given below.

\begin{ex}
 If $X$ is smooth of dimension $d$, and $L$ is a local system such as $\Q_X$, then $L[d]\in Perv(X)$. More generally, if $L$ is irreducible and defined on the smooth locus $U = X- X_{sing}$, $L[d]$ has a unique extension to a simple object of $Perv(X)$. This is 
 the intersection cohomology complex  $IC(L)$
 \end{ex}

\begin{ex}
 Let $X$ be smooth of dimension $d$ and $M$ is a regular holonomic $D_X$-module \cite{htt}. We assume by default that $D$-modules are left modules.
 Then  the  de Rham complex 
 $$DR(M)= (M\to \Omega_X^1\otimes M\to\Omega_X^2\otimes M \ldots)[d]$$
 is a perverse sheaf of $\C$-vector spaces.
\end{ex}

Saito \cite{saito, saito2} (see also \cite{schnell})  gives a Hodge theoretic enrichment of $Perv(X)$ as follows.
First recall \cite{schmid} that a  variation of Hodge structure $H= (L, V,\nabla, F^\dt,\alpha)$  of weight $k$ consists of a rational local system $L$, a flat vector
bundle $(V, \nabla)$, with an isomorphism $\alpha:\C\otimes L \cong \ker \nabla$, and a decreasing filtration $F^\dt$ 
by subbundles of $V$. These are subject to the following axioms:
\begin{itemize}
 \item Griffiths transversality: $\nabla (F^p)\subset \Omega_X^1\otimes F^{p-1}$,
 \item  the  fibres $(L_x, V_x, F_x^\dt)$ are Hodge structures of weight $k$.
\end{itemize}
We also assume that $H$ carries an (unspecified) polarization.
In Saito's theory, the above data is replaced by a  quadruple $(P, M, F_\dt, \alpha)$, consisting of a perverse sheaf $P\in Perv(X)$, a  
regular holonomic $D$-module $(M,F)$ with a good filtration on an ambient smooth variety (the choice of which is immaterial) with an isomorphism $\alpha:\C\otimes P \cong DR(M)$. Saito constructs a suitable  semisimple abelian category $HM(X,w)$ (or $MH(X,w)$ {\em en francais}) of pure polarizable Hodge modules of weight $w$ on $X$. The precise axioms are complicated, but at the end of the day, Saito   \cite[thm 3.21]{saito2} gives the following description of the objects:

\begin{thm}[Saito]\label{thm:simpleHM}
 A simple object of $HM(X,w)$ corresponds to an irreducible   variation of Hodge structure of weight $w-\dim U$ on a smooth locally closed irreducible subvariety $U\subset X$.  More precisely, if $H= (L, V,\nabla, F^\dt, \alpha)$ is a VHS on $U\xrightarrow{j} \bar U\xrightarrow{\iota} X$, then
 the corresponding Hodge module $IC^H(H)=(P, M, F, \alpha)$ has $P = \iota_*j_{!*} L[\dim U]= \iota_*IC(L)$, and  $M$ is the minimal extension of
the $D_U$-module associated to  $(V,\nabla)$.  The filtration on $M$ is given by suitably extending the initial filtration on the VHS.
\end{thm}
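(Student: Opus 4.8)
The plan is to separate the statement into its topological (perverse sheaf) part and its Hodge-theoretic (filtered $D$-module) part, and to argue both directions of the correspondence. For the underlying perverse sheaf the work is essentially done by the theory of perverse sheaves: by \cite{bbd} (as recalled in the examples above) every simple object of $Perv(X)$ has the form $\iota_* IC(L)$ for a unique irreducible local system $L$ on a smooth locally closed irreducible $U\subset X$, with $\iota:\bar U\to X$. So the first task is to show that a simple Hodge module has irreducible support and that its perverse sheaf is of this $IC$ form. This follows from the \emph{strict support decomposition} in Saito's theory: every pure Hodge module is a direct sum of pieces with strict (irreducible) support, so a simple one is concentrated on a single irreducible $Z=\operatorname{supp}$, with $U=Z_{sm}$, and its perverse sheaf is then forced to be $\iota_* IC(L)$.

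The heart of the matter is the identification on the smooth locus and the extension across the boundary. First I would restrict to $U$: a pure Hodge module of weight $w$ whose support is all of a smooth variety $U$ is, by Saito's smooth-support case, exactly a polarizable VHS $(L,V,\nabla,F^\dt,\alpha)$ on $U$ of weight $w-\dim U$, with perverse sheaf $L[\dim U]$, $D$-module $(V,\nabla)$, and $F$ the reindexed Hodge filtration. (The weight shift by $\dim U$ is Saito's normalization.) This smooth case is precisely where Schmid's asymptotic theory \cite{schmid} enters, via the nilpotent and $SL(2)$-orbit theorems, guaranteeing that fibrewise Hodge structures together with Griffiths transversality encoded in the good filtration assemble into an admissible VHS, and conversely.

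Having the VHS on $U$, I would reconstruct $(P,M,F,\alpha)$ on all of $X$. The perverse sheaf is forced as above. On the $D$-module side $M$ must be the minimal (intermediate) extension $j_{!*}$ of the $D_U$-module attached to $(V,\nabla)$, since under Riemann--Hilbert the minimal extension corresponds to $IC(L)$, which has no sub- or quotient object supported on the boundary --- matching simplicity. The filtration $F_\dt M$ is then extended using the Kashiwara--Malgrange $V$-filtration $V^\dt$ along the (local) boundary divisor: one imposes compatibility of $F_\dt$ with $V^\dt$ and that $F_\dt M$ be generated, as a filtered $D$-module, from the canonical part $V^{>-1}$, which pins the extension down uniquely. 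The key algebraic point to verify at each step is \emph{strictness}, so that no length is lost in passing the filtration across the divisor; this is exactly the quasi-unipotence-and-strictness package Saito builds in.

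The main obstacle --- and the genuine depth of the theorem --- is verifying that the object so constructed actually satisfies Saito's inductive axioms, i.e.\ that applying the nearby and vanishing cycle functors $\psi_t,\phi_t$ along an arbitrary local function $t$ again yields pure Hodge modules on the smaller support, with the correct monodromy weight filtration and a compatible polarization. This is the inductive core of Saito's construction: the induction runs on $\dim\operatorname{supp}$, and each step demands controlling the limit mixed Hodge structure through Schmid, checking that the relative monodromy filtration is compatible with $F$, and running the hard Lefschetz / polarization argument. I would not attempt this package here; it is the content of \cite[thm 3.21]{saito2}, and in the present expository setting I would reduce the statement to the smooth case plus the $V$-filtration extension as above and cite Saito for the verification of the axioms.
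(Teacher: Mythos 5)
Your proposal is correct and in the end takes the same route as the paper: the paper treats this statement as purely expository and gives no proof beyond the citation to \cite[thm 3.21]{saito2}, and your outline (strict support decomposition, the smooth case via Schmid, the minimal extension under Riemann--Hilbert, and the $V$-filtration extension of $F_\dt M$) likewise defers the genuinely hard inductive verification of Saito's axioms to [loc.\ cit.]. The sketch you give of the reduction is accurate and consistent with the paper's surrounding discussion, e.g.\ the explicit normal crossings description in example \ref{ex:ncd}.
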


We omit the precise description of the above filtered $D$-module $(M,F)$, since it somewhat involved. However, there is one case where  this can be
made fairly explicit \cite[\S 3]{saito2}.

\begin{ex}\label{ex:ncd}
Suppose that $X$ is smooth, and $D\subset X$ is a divisor with simple normal crossings. Let $H= (L, V,\nabla, F^\dt, \alpha)$ be a VHS
on $j:U= X-D\to X$. Let  $(V^{>-1},\bar  \nabla)$ be the Deligne extension of $(V,\nabla)$ to a bundle with a logarithmic connection with residues having
eigenvalues in the interval $(-1, 0]$. Then the  filtered  left $D_X$-module of $IC^H(H)$ is given by
$$M = D_X \cdot V^{>-1},\quad F_p M = \sum_i F_i D_X\cdot V^{>-1}\cap j_*F^{i-p} V $$
where $D_X$ acts on $V^{>-1}$ through $\bar \nabla$ and $F_\dt D_X$ is the filtration by order. We also note that the corresponding filtered right module is
$$ M^R = \omega_X\otimes M, \quad F_p M^R = \omega_X\otimes F_{p+\dim X} M$$
\end{ex}

%\begin{rmk}\label{rmk:vhs}
% The filtration on $M$ is given by suitably extending the initial filtration on the VHS. In particular, if
% $X$ is smooth and $j:U\to X$ is open, then  $F_p M|_U = F^{-p} V$ if one is working with left $D$-modules, and $F_p M|_U = \omega_U\otimes F^{-p-\dim X} V$ for
% right modules.  See the above references for more details. [CHECK]
%\end{rmk}

%associates to $X$ a semisimple abelian category $HM(X,w)$ (or $MH(X,w)$ {\em en francais}) of pure polarizable Hodge modules of weight $w$.
%A simple object of $HM(X,w)$ corresponds to an irreducible  polarizable variation of Hodge structure of weight $w-\dim U$ on a smooth locally closed irreducible subvariety $U\subset X$. 
%This description is given after the fact in \cite[thm 3.21]{saito2}.
%The category $HM(X,w)$  is in fact a subcategory of the category of quadruples  $(P, M, F, \alpha)$ consisting of a perverse sheaf $P\in Perv(X)$ and a  
%regular holonomic $D$-module $(M,F)$ with a good filtration on an ambient smooth variety (the choice of which is immaterial) with an isomorphism $\alpha:DR(M)\cong P\otimes \C$.
%If $V$ is polarizable variation of Hodge structure with local system $L$ on $U\xrightarrow{j} \bar U\xrightarrow{\iota} X$, then $P = \iota_*j_{!*} L[\dim U]= \iota_*IC(L)$ (the intersection complex associated to $L$) and $M$ is the minimal extension of
%the $D_U$-module associated to the connection on $V$. 
%The filtration $F_pM$ is a specific  extension of $F^{-p}V$.
%We denote this by $IC^H(L)$. 

If $X$ is smooth of dimension $d$  and $L=\Q_X$ is given the constant variation of Hodge structure of type $(0,0)$, then we also write $\Q^H[d] = IC^H(L)$. 

  A mixed Hodge module consists of  $(P, M, F, W, \alpha)$ where $W$ is an  additional
 filtration on $P$ and $M$, such that the associated graded object $W_k/W_{k-1}\in HM(X,k)$. This is subject to additional conditions that we will not describe.
 We will often just use $M$ to represent the whole mixed Hodge module.
 These form an abelian category $MHM(X)$.  Let 
 $$Perv_{HM}(X)\subset Perv_{MHM}(X)\subset Perv(X)$$
 denote the essential images under the forgetful functor from $\bigoplus_w HM(X,w)\subset MHM(X)$ to $Perv(X)$. These are both abelian and the first is semisimple.
 Given a mixed Hodge module $M$, the associated graded of the de Rham complex $Gr^p_FDR(M)= Gr_{-p}^F DR(M)$ is an object of the bounded coherent derived category $D^b_{coh}(X)$.
 We mention a few basic examples needed later.

\begin{ex}\label{ex:Qd}
 Let $X$ be smooth of dimension $d$, then 
 $$Gr_{F}^p DR(\Q_X^H[d]) = \Omega_X^p[d+p]$$
\end{ex}

\begin{ex}\label{ex:Vd}
 Let $X$ be smooth of dimension $d$, let $D\subset X$ be divisor with normal crossings, and $H = (L, V,\ldots)$  a polarizable variation of Hodge structure on $U=X-D\xrightarrow{j} X$. Then the perverse sheaf $\R j_* L[d]$ is part of a mixed Hodge module  $M$ on $X$. Suppose $L$ has unipotent local monodromy about $D$. Let $\bar V = V^{>-1}$ as in example \ref{ex:ncd}, and let  $\bar F^\dt= j_*F^\dt\cap \bar V\subseteq \bar V$.
 In the left $D$-module picture, the Hodge filtration on  $M$  satisfies
 $$Gr_{F}^p DR(M) \cong  (Gr_{\bar F}^p \bar V\xrightarrow{\theta} \Omega_X^1(\log D)\otimes Gr_{\bar F}^{p-1} \bar V\xrightarrow{\theta}  \Omega_X^2(\log D)\otimes Gr_{\bar F}^{p-2} \bar V\xrightarrow{\theta} \ldots)[d]$$
 where the ``Higgs field"  $\theta =  Gr_{\bar F}\bar  \nabla$. 
 \end{ex}

\begin{ex}\label{ex:kollar}
Let $X$ be an algebraic variety with a smooth Zariski open subset $j:U\to X$. Let  $H $ a polarizable variation of Hodge structure
on $U$. Choose $p=p_{\max}(H)$ to be the maximal $p$ for which $F^p\not= 0$. Then if use right $D$-modules in the set up,
$Gr_{F}^p DR(IC^H(H))$ is a torsion free coherent sheaf, denoted in \cite{saito3} by $S_X(H)$. 
(The torsion freeness is not explicitly stated in [loc. cit.], but it is easy to deduce by resolving singularities $\pi:\tilde X\to X$,
using $\pi_*S_{\tilde X}(H) = S_X(H)$, and using the formula described in example
\ref{ex:ncd} to see that this is torsion free.)
For example, if $f:Y\to X$ is a proper map, smooth over $U$, and $L= R^if_*\Q|_U$ (as a  VHS), then $S_X(L) = R^if_*\omega_Y$.
We dub $S_X(H)$ the canonical sheaf of $H$.
\end{ex}

Since the conditions \eqref{eq:perv} are local for the classical topology, given an \'etale map $\pi:Y\to X$,   the operation $\pi^*:D_c^b(X)\to D_c^b(Y)$ takes $Perv(X)\to Perv(Y)$.
Saito \cite[thm 0.1]{saito2} gives a functor $\pi^*:D^bMHM(Y)\to D^bMHM(X)$ compatible with the previous functor.
When $X$ is smooth, $\pi^* (P, M, F, W,\alpha) = (\pi^* P, \pi^*M,\ldots)$.

Finally, we observe that there is a generalization of the Hodge decomposition.

\begin{thm}[Saito]\label{thm:strictness}
 If $X$ is projective, and $(P,M, F,\ldots) \in MHM(X)$, then there is a noncanonical isomorphism
 $$H^i(X, P)\otimes \C \cong \bigoplus_p H^i(X,Gr_F^p DR(M))$$
\end{thm}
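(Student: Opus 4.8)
The plan is to recognize this as a formal consequence of the fundamental \emph{strictness} property of Saito's filtered de Rham functor. First I would use the comparison isomorphism $\alpha:\C\otimes P\cong DR(M)$ to rewrite the left-hand side as the hypercohomology $H^i(X, DR(M))$, so that the assertion becomes a statement purely about the de Rham complex and the Hodge filtration $F$ it inherits from $M$. The filtration $F_\dt$ on $M$ induces a filtration on $DR(M)$, hence on $H^i(X, DR(M))$, and the associated graded pieces of the latter are exactly what must be computed.

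Next I would consider the filtered complex $(DR(M), F)$ together with its filtered hypercohomology spectral sequence, the Hodge--de Rham spectral sequence of the mixed Hodge module $M$, whose $E_1$-page is assembled from the groups $H^i(X, Gr_F^p DR(M))$ and which abuts to the $F$-filtration on $H^i(X, DR(M))$. The entire theorem reduces to degeneration of this spectral sequence at $E_1$, equivalently to the assertion that the pushforward $Ra_*$ along the structure map $a:X\to \mathrm{pt}$ is strict with respect to $F$, i.e. the differentials of the filtered complex $Ra_*(DR(M), F)$ are strictly compatible with $F$.

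This $E_1$-degeneration is the crux, and the step I expect to be the main obstacle. It is one of the central theorems of Saito's theory: it is precisely the compatibility that makes the six-functor formalism interact correctly with the Hodge filtration, and it uses the properness of $a$, that is, the projectivity of $X$, in an essential way. I would invoke Saito's strictness and decomposition results rather than attempt to reprove them. In the pure case strictness is part of the hard Lefschetz and decomposition package for polarizable Hodge modules; the mixed case is then handled by reducing to the pure graded pieces $Gr^W_k M$ via the weight filtration $W$.

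Granting strictness, degeneration yields canonical isomorphisms $Gr_F^p H^i(X, DR(M)) \cong H^i(X, Gr_F^p DR(M))$. Finally, since each $H^i(X, DR(M))$ is a finite-dimensional $\C$-vector space carrying the finite filtration $F$, this filtration admits a splitting, necessarily non-canonical; combining with the previous step gives $H^i(X, P)\otimes\C \cong \bigoplus_p Gr_F^p H^i(X, DR(M)) \cong \bigoplus_p H^i(X, Gr_F^p DR(M))$, as claimed. As a sanity check on the conventions, when $X$ is smooth and $M=\Q_X^H[d]$ the summands are identified through Example \ref{ex:Qd} with the cohomology of the $\Omega_X^p$, and the isomorphism recovers the classical Hodge decomposition.
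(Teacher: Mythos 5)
Your proposal is correct and is essentially the paper's proof in expanded form: the paper simply cites \cite[thm 28.1]{schnell}, and the argument behind that citation is exactly the one you give --- strictness of the filtered direct image $Ra_*$ (Saito's theorem, invoked rather than reproved) forces $E_1$-degeneration of the Hodge--de Rham spectral sequence, yielding $Gr_F^p H^i(X, DR(M))\cong H^i(X, Gr_F^p DR(M))$, after which a noncanonical splitting of the finite filtration gives the direct sum. You correctly identify strictness as the crux and where projectivity enters, so there is nothing to add.
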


\begin{proof}
 See \cite[thm 28.1]{schnell}.
\end{proof}

Some additional facts about Hodge modules will be recalled later as we need them.

\section{Koll\'ar-hyperbolic varieties}

First we explain some notation and conventions used in this paper.
The word curve without qualification means smooth projective curve.
We will write $h^i = \dim H^i$.
If $f:X\to Y$ is a continuous  map of connected spaces, we denote the induced map  $\pi_1(X)\to \pi_1(Y)$ by $f$ as well.
Let  $\hat \Gamma$ denote the profinite completion of a group $\Gamma$, and  let $K(\Gamma) = \ker[\Gamma\to \hat \Gamma]$.
We will usually formulate our definitions and results using subgroups of $\hat \pi_1(X)$, since the statements are bit cleaner.
But readers who prefer to do so can translate these  to statements involving subgroups of $\pi_1(X)$, because there is a bijection between the set of (normal, finite index) subgroups $K(\Gamma)\subseteq G\subseteq \Gamma$ and the 
set of closed (normal, open) subgroups $H\subseteq \hat\Gamma$. In one direction $G\mapsto \bar G$ and in the other $H\mapsto H\cap \Gamma$.
%Given a (normal, finite index) subgroup $K(\Gamma)\subseteq G\subseteq \Gamma$,
%its closure $\bar G$ is a closed (normal, open) subgroup of $\hat \Gamma$. Given a closed (normal, open) subgroup $H\subseteq \hat\Gamma$, $H\cap \Gamma$ is a (normal, finitely generated) subgroup containing $K(\Gamma)$.  These are inverse operations.

Let $X$ be a normal projective variety with a closed normal subgroup $H\subset \hat\pi_1(X)$. 
We will say that $X$  {\em Koll\'ar-hyperbolic} if any nonconstant map from a smooth projective curve  $C\to X$ induces a nontrivial homomorphism  $\hat \pi_1(C)\to \hat \pi_1(X)$. 
It is useful to refine this a bit. Given a closed normal subgroup $H\subset \hat\pi_1(X)$, let us say that 
$X$  {\em Koll\'ar-hyperbolic }with respect to $H$, or that $(X,H)$ is      Koll\'ar-hyperbolic, if 
any nonconstant map from a smooth projective curve  $C\to X$ induces a nontrivial homomorphism  $\hat \pi_1(C)\to \hat \pi_1(X)/H$.  
 Clearly  $X$ is Koll\'ar-hyperbolic if $(X,H)$ is for some $H$.
 Koll\'ar \cite{kollar, kollar2} used a different condition, given in (b) of the next proposition, as the definition for what  he would call varieties with large algebraic fundamental group.
 If $G\subset \pi_1(X)$ corresponds to $H$ as in the previous paragraph, 
 $(X,H)$ is  Koll\'ar-hyperbolic if the Shafarevich map $Sh^G$, defined in \cite{kollar2},  is defined everywhere and an isomorphism.

\begin{prop}\label{prop:defkhyp}
Let $X$ be a normal  projective variety and $H\subset \hat\pi_1(X)$ a closed normal subgroup.
\begin{enumerate}
\item[(a)] $(X,H)$ is  Koll\'ar-hyperbolic  if and only if  for any nonconstant map from a smooth projective curve  $C\to X$, the image  $\im[\hat \pi_1(C)\to \hat \pi_1(X)/H]$ is infinite.
 \item[(b)] $(X,H)$ is  Koll\'ar-hyperbolic  if and only if for any subvariety $Y\subset X$, with normalization $Y^n$,
 $\im[\hat \pi_1(Y^n)\to \hat \pi_1(X)/H]$ is infinite.
\end{enumerate}

\end{prop}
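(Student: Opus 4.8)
The plan is to establish (a) first and then deploy it inside the proof of (b); both rest on the same device, namely passing to a finite \'etale cover to trivialize a finite monodromy image. Writing $G=\hat\pi_1(X)/H$, the implication from infinite image to nontrivial homomorphism in (a) is immediate, since a trivial homomorphism has image of cardinality one. For the converse I would argue contrapositively: given a nonconstant $f\colon C\to X$ whose induced map $\phi\colon\hat\pi_1(C)\to G$ has finite image, the kernel $\ker\phi$ is an open normal subgroup and hence corresponds to a connected finite Galois \'etale cover $p\colon C'\to C$ of smooth projective curves with $p_*\hat\pi_1(C')=\ker\phi$. Then $f\circ p\colon C'\to X$ is again nonconstant (as $p$ is finite surjective and $f$ nonconstant) while $\phi\circ p_*$ is trivial because $\im p_*=\ker\phi$. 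This contradicts Koll\'ar-hyperbolicity of $(X,H)$, so every nonconstant curve must have infinite image.

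For the backward direction of (b), which I read for positive-dimensional $Y$ since a point gives a trivial, hence finite, image, I would take a nonconstant $f\colon C\to X$ and set $Y=f(C)$, a closed irreducible curve. As $C$ is normal and $C\to Y$ is dominant, it factors through the normalization as $C\to Y^n\to Y\hookrightarrow X$, with $C\to Y^n$ a finite surjection of smooth projective curves. Hence $\im[\hat\pi_1(C)\to G]$ has finite index in $\im[\hat\pi_1(Y^n)\to G]$, which is infinite by hypothesis; a finite-index subgroup of an infinite group is infinite, so $\im[\hat\pi_1(C)\to G]$ is infinite and $(X,H)$ is Koll\'ar-hyperbolic by (a).

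For the forward direction I would suppose, for contradiction, that some positive-dimensional $Y$ has $\im[\hat\pi_1(Y^n)\to G]$ finite. Since $Y^n$ is normal, connected and projective, $K=\ker[\hat\pi_1(Y^n)\to G]$ is open normal and corresponds to a connected finite \'etale cover $q\colon Y'\to Y^n$ with $q_*\hat\pi_1(Y')=K$. Because $Y'$ is positive-dimensional and projective it contains a smooth projective curve $C$ mapping nonconstantly to it (take successive general hyperplane sections and normalize an irreducible component). Then $C\to Y'\to Y^n\to X$ is nonconstant, but its induced map to $G$ factors through $q_*\hat\pi_1(Y')=K=\ker[\hat\pi_1(Y^n)\to G]$ and is therefore trivial, contradicting Koll\'ar-hyperbolicity. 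I expect this last step to be the crux: the essential point is that $Y'$ maps to $X$ itself, not to a cover of $X$, so a merely finite image of $\hat\pi_1(Y^n)$ in $G$ pulls back to a trivial image on $\hat\pi_1(Y')$ and hence on any curve inside $Y'$. The remaining ingredients, the Galois correspondence for open subgroups of \'etale fundamental groups of normal projective varieties, the factorization through normalization, and the existence of curves via Bertini, are standard.
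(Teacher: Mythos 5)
Your proof is correct. Part (a) and the backward implication of (b) match the paper's argument essentially verbatim: for (a), the finite \'etale cover trivializing a finite monodromy image, and for (b), the factorization $C\to f(C)^n\to X$ together with the fact that a finite surjection of smooth projective curves induces a finite-index image on $\hat\pi_1$. Where you genuinely diverge is the forward implication of (b). The paper argues positively: it resolves singularities $\pi:\tilde Y\to Y^n$, invokes the Lefschetz hyperplane theorem to produce a complete-intersection curve $C$ with $\pi_1(C)\twoheadrightarrow \pi_1(\tilde Y)$, and uses that $\pi$ is surjective with connected fibres (since $Y^n$ is normal) to get $\pi_1(\tilde Y)\twoheadrightarrow \pi_1(Y^n)$; hence $\im[\hat\pi_1(Y^n)\to \hat\pi_1(X)/H]$ \emph{coincides} with $\im[\hat\pi_1(C)\to \hat\pi_1(X)/H]$, which is infinite by (a). You instead argue by contradiction, replaying the trick from (a) one dimension up: if the image of $\hat\pi_1(Y^n)$ were finite, the open kernel would give a connected finite \'etale cover $Y'\to Y^n$ on which the map to $\hat\pi_1(X)/H$ dies, and any Bertini curve in $Y'$, composed with the finite map $Y'\to X$, would then contradict Koll\'ar-hyperbolicity directly. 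Your route is more elementary --- it needs neither resolution of singularities nor the Lefschetz theorem, only covering theory, the Riemann existence theorem (to realize $Y'$ as a normal projective variety, so that Bertini applies), and general hyperplane sections --- whereas the paper's route yields the slightly stronger intermediate conclusion that the image of $\hat\pi_1(Y^n)$ is exactly realized by a single curve, not merely shown infinite. Your explicit restriction of (b) to positive-dimensional $Y$ (a point has trivial, hence finite, image) is the correct reading of a convention the paper leaves implicit.
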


\begin{proof}
  Suppose $(X,H)$ is   Koll\'ar-hyperbolic. Given a smooth curve $C$ and nonconstant map $f:C\to X$,
 suppose that $f(\hat \pi_1(C))$ is finite. Then we can find an \'etale cover $g:D\to C$ such that $f\circ g (\hat \pi_1(D))$ is trivial. This gives a contradiction. Therefore
  $f(\hat \pi_1(C))$ is infinite. The other direction for (a) is obvious.
  
  Suppose  that for any subvariety $Y\subset X$, with normalization $Y^n$,
 $\im[\hat \pi_1(Y^n)\to \hat \pi_1(X)/H]$ is infinite. Let $f:C\to X$ be a nonconstant map from a curve. Let $D = f(C)^n$. Then $\im [\hat \pi_1(C)\to \hat \pi_1(D)]$ has finite index in $\hat \pi_1(D)$.
 Therefore $f(\hat \pi_1(C))$ has finite index in the infinite group $\im [\hat \pi_1(D)\to \hat \pi_1(X)]$. Therefore $f(\hat \pi_1(C))$ is infinite.
 This proves that $(X,H)$ is Koll\'ar-hyperbolic. Conversely, suppose that $(X,H)$ is Koll\'ar-hyperbolic. Let $Y\subset X$ be a subvariety. Let $\pi:\tilde Y\to Y^n$ be a resolution of singularities.
 The Lefschetz theorem \cite{milnor} implies that we can find a smooth curve $C$, given as a complete intersection of ample divisors, and a surjection $\pi_1(C)\to \pi_1(\tilde Y)$.
 Moreover, we can assume by construction that $C$ maps nontrivially to $X$.
 The map $\pi$ is surjective with connected fibres, because $Y^n$ is normal. Therefore, we have a surjection $\pi_1(\tilde Y)\to \pi_1(Y^n)$. Thus we get a surjection
 $\hat \pi_1(C)\to \hat \pi_1(Y^n)$. Consequently $\im[\hat \pi_1(Y^n)\to \hat \pi_1(X)]$ coincides with $\im[\hat \pi_1(C)\to \hat \pi_1(X)]$, but the second group is infinite by (a).
  
\end{proof}

Gromov \cite{gromov} defines a complex manifold $X$ to be {\em K\"ahler-hyperbolic} if it  possesses a K\"ahler form $\omega$ such that $\pi^*\omega = d\alpha$, where $\pi:\tilde X\to X$ is the universal cover and $\|\alpha\|$ is bounded. A compact K\"ahler manifold with negative sectional curvature is K\"ahler-hyperbolic.
Other  examples and properties  can be found in \cite{gromov}.
The condition has been relaxed by Cao-Xavier \cite{cx},  Eyssidieux \cite{eyss} and Jost-Zuo \cite{jz} to allow nonpositively curved manifolds.
Specifically, the first and last paper defines  $X$ to be  {\em K\"ahler nonelliptic}  if $\pi^*\omega = d\alpha$ where $\|\alpha\|$ has sublinear growth. For example, an abelian variety satisfies the last
condition, but it is not K\"ahler-hyperbolic.

\begin{prop}\label{prop:khyp}
Let  $X, X_1, X_2$ be  normal projective varieties and let $H\subset \hat \pi_1(X)$, $H_i\subset \hat \pi_1(X_i)$  be closed normal subgroups.  Let $\tilde X_K= \tilde X/K(\pi_1(X))$.
\begin{enumerate}
 \item[(a)] If $X_1$ and $X_2$  are Koll\'ar-hyperbolic (with respect to $H_1$ and $H_2$), then $X_1\times X_2$  is  Koll\'ar-hyperbolic (with respect to $H_1\times H_2$).
 
  \item[(b)] If $X$ is Koll\'ar-hyperbolic (with respect to $H$) and $Y$ is normal and projective with a map $Y\to X$ which is  finite over its image, then $Y$ is   Koll\'ar-hyperbolic (with respect to the preimage of $H$).
 \item[(c)] If there is a representation $\rho:\pi_1(X)\to GL_n(F)$, for some field $F$, such that for any nonconstant map from a smooth projective curve  $f:C\to X$,
 $\rho\circ f:\pi_1(C)\to GL_n(F)$ has nontrivial image, then $X$ is  Koll\'ar-hyperbolic with respect to $\overline{\ker\rho}$.
 \item[(d)] If $X$ is smooth and if it possesses   a K\"aher form $\omega$ such that its pullback to $\tilde X_K$ is exact, then $X$ is Koll\'ar-hyperbolic.
 \item[(e)] If  $X$ is smooth, $\pi_1(X)$ is residually finite and   if either $X$ is K\"ahler nonelliptic or $\pi_2(X)=0$, then $X$ is Koll\'ar-hyperbolic. A K\"ahler-hyperbolic manifold with residually finite fundamental group is Koll\'ar-hyperbolic.
\end{enumerate}

\end{prop}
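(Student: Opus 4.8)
The plan is to reduce each part to the definition of Koll\'ar-hyperbolicity itself: $(X,H)$ is Koll\'ar-hyperbolic exactly when every nonconstant $f:C\to X$ from a curve induces a nontrivial $\hat\pi_1(C)\to\hat\pi_1(X)/H$ (equivalently, by Proposition \ref{prop:defkhyp}(a), one with infinite image, though I will only need nontriviality). Parts (a)--(c) are then formal manipulations of this induced map, part (d) carries the geometric content through a Stokes argument, and part (e) follows directly from (d).

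For (a) I would use that profinite completion commutes with finite direct products, so $\hat\pi_1(X_1\times X_2)\cong\hat\pi_1(X_1)\times\hat\pi_1(X_2)$ and the quotient by $H_1\times H_2$ is $\hat\pi_1(X_1)/H_1\times\hat\pi_1(X_2)/H_2$. A nonconstant $f:C\to X_1\times X_2$ has at least one nonconstant projection $f_i$; composing $\hat\pi_1(C)\to\hat\pi_1(X_1\times X_2)/(H_1\times H_2)$ with the $i$-th coordinate projection produces the nontrivial map supplied by Koll\'ar-hyperbolicity of $X_i$, so the original map is nontrivial. For (b), set $H'=g^{-1}(H)$; a map $g:Y\to X$ finite over its image does not contract curves, so $g\circ f$ is nonconstant whenever $f:C\to Y$ is, and the induced $\hat\pi_1(Y)/H'\to\hat\pi_1(X)/H$ is injective. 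Koll\'ar-hyperbolicity of $X$ makes $\hat\pi_1(C)\to\hat\pi_1(X)/H$ nontrivial, and since this factors through $\hat\pi_1(C)\to\hat\pi_1(Y)/H'$, the latter is nontrivial as well. For (c), the essential input is Malcev's theorem: $\Gamma=\rho(\pi_1(X))\subset GL_n(F)$ is a finitely generated linear group, hence residually finite, so it embeds into $\hat\Gamma\cong\hat\pi_1(X)/\overline{\ker\rho}$ (using $\hat{G}/\overline{N}\cong\widehat{G/N}$). If $\rho\circ f$ has nontrivial image in $\Gamma$, that image remains nontrivial in $\hat\Gamma$, so $\hat\pi_1(C)\to\hat\pi_1(X)/\overline{\ker\rho}$ is nontrivial.

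Part (d) is the heart and the main obstacle. Given a nonconstant $f:C\to X$, I want $\hat\pi_1(C)\to\hat\pi_1(X)$ nontrivial. If it were trivial, then $f_*\pi_1(C)\subset\ker[\pi_1(X)\to\hat\pi_1(X)]=K(\pi_1(X))$, which is precisely the lifting criterion for $f$ along the covering $\pi:\tilde X_K\to X$, yielding some $\tilde f:C\to\tilde X_K$. Then $f^*\omega=\tilde f^*\pi^*\omega=\tilde f^*(d\alpha)=d(\tilde f^*\alpha)$ is exact on the closed surface $C$, so $\int_C f^*\omega=0$ by Stokes. But $f$ is nonconstant holomorphic and $\omega$ is K\"ahler, so $f^*\omega$ is a nonnegative $(1,1)$-form that is positive wherever $df\neq0$, giving $\int_C f^*\omega>0$, a contradiction. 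The delicate point is exactly this translation of the (a priori profinite) triviality hypothesis into the existence of a lift to $\tilde X_K$, where exactness of $\pi^*\omega$ is available; the remainder is just positivity and Stokes.

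For (e), residual finiteness of $\pi_1(X)$ forces $K(\pi_1(X))=1$, so $\tilde X_K=\tilde X$ is the universal cover. If $X$ is K\"ahler nonelliptic (or K\"ahler-hyperbolic), then by definition $\pi^*\omega=d\alpha$ on $\tilde X=\tilde X_K$, and (d) applies directly; this also gives the final assertion about K\"ahler-hyperbolic manifolds. If instead $\pi_2(X)=0$, then $\tilde X$ is simply connected with $\pi_2(\tilde X)=\pi_2(X)=0$, so Hurewicz gives $H_2(\tilde X;\Z)=0$ and hence $H^2(\tilde X;\R)=0$; thus the closed form $\pi^*\omega$ is exact on $\tilde X=\tilde X_K$, and (d) applies once more.
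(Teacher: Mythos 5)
Your proposal is correct and takes essentially the same route as the paper's proof in every part: projecting to a factor for (a), factoring $\hat\pi_1(C)\to\hat\pi_1(X)/H$ through $\hat\pi_1(Y)/H'$ for (b), Malcev's theorem together with the identification $\hat\Gamma\cong\hat\pi_1(X)/\overline{\ker\rho}$ for (c), the lift to $\tilde X_K$ followed by Stokes for (d), and reduction to (d) via residual finiteness (so $\tilde X=\tilde X_K$) and Hurewicz for (e). If anything, your (d) is stated slightly more carefully than the paper's terse ``exact K\"ahler form'' remark, since you correctly observe that $f^*\omega$ is only a semipositive $(1,1)$-form with strictly positive integral.
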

\begin{proof} A nontrivial map $C\to X_1\times X_2$ from a curve, must induce a nontrivial map on one of the factors. Therefore we
  have a nontrivial map 
 $$\hat \pi_1(C)\to \hat \pi_1(X_1\times X_2)/H_1\times H_2\cong \hat \pi_1(X_1)\times \hat \pi_1(X_2)/H_1\times H_2$$
  This proves (a).
Let $Y\to X$ be as in (b), and let $H'\subset \hat \pi_1(Y)$ be the preimage of $H$. 
Given a nonconstant map from a curve $C\to Y$, the composite $C\to X$ is nonconstant. Therefore $\im[\hat \pi_1(C)\to \hat \pi_1(X)/H]$ is nontrivial, which implies $\im[\hat \pi_1(C)\to \hat \pi_1(Y)/H']$
is nontrivial. This proves (b).
For the second statement, let $\Gamma = \rho(\pi_1(X))$.
By a well known theorem of Malcev (see \cite{magnus}), $\Gamma$ is residually finite. Therefore $\Gamma$ embeds into $\hat\Gamma$. Consequently for any $C\to X$, $\hat \pi_1(C)$ has an infinite image in $\hat \Gamma$, and therefore in  $\hat \pi_1(X)/\overline{\ker\rho}$. This proves (c).
Let $\pi:\tilde X_K\to X$ denote the projection.
  Suppose that $\pi^*\omega = d\alpha$, and suppose that  $f:C\to X$ is a nonconstant map from a curve with such that $\hat \pi_1(C)\to \hat \pi_1(X)$ is trivial.
  Then $f$ will lift to a map $\tilde f: C\to \tilde X_K$.  This implies that $f^*\omega= d\tilde f^*\alpha$ is an exact K\"ahler form. This is impossible. Therefore (d) holds.
   If $\pi_1(X)$ is residually finite, then $\tilde X = \tilde X_K$
  Furthermore, if $\pi_2(X)=0$, then $\pi_2(\tilde X)=0$. So by Hurewicz's theorem $H_2(\tilde X)=0$, which implies that $\pi^*\omega$ is exact. 
  Therefore (e) follows from (d) and \cite[cor. 0.4.C]{gromov}.
\end{proof}

\begin{cor}
 The examples discussed in the introduction are Koll\'ar-hyperbolic.
\end{cor}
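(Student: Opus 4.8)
The plan is to deduce all three classes directly from Proposition~\ref{prop:khyp}, so that no fresh curve-by-curve analysis is required. For class~(a), if $X$ is smooth projective with finite Albanese map, then $X\to \mathrm{Alb}(X)$ is finite over its image, and by Proposition~\ref{prop:khyp}(b) it is enough to know that the abelian variety $A=\mathrm{Alb}(X)$ is Koll\'ar-hyperbolic. This I would get from Proposition~\ref{prop:khyp}(e): an abelian variety of dimension $g$ is a complex torus, hence a $K(\pi,1)$ with $\pi_2(A)=0$, and $\pi_1(A)=\Z^{2g}$ is residually finite, so (e) applies at once.

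For class~(b), I would take $X=\Gamma\backslash\mathcal D$ a hermitian locally symmetric variety of noncompact type, with $\Gamma$ cocompact so that $X$ is smooth projective. Its fundamental group $\Gamma$ is a lattice in a semisimple Lie group and hence residually finite by Mal'cev--Selberg, while $\mathcal D$ carries a complete nonpositively curved K\"ahler metric, so that $X$ is K\"ahler nonelliptic in the sense recalled just before Proposition~\ref{prop:khyp} (Cao--Xavier, Jost--Zuo). Proposition~\ref{prop:khyp}(e) then shows $X$ is Koll\'ar-hyperbolic, and for a subvariety $Y\subset X$ the composite $Y^n\to Y\hookrightarrow X$ of the normalization with the inclusion is finite over its image, so Proposition~\ref{prop:khyp}(b) transfers the property to $Y^n$.

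For class~(c) I would again aim at Proposition~\ref{prop:khyp}(e). By construction a Koll\'ar-hyperbolic Kodaira fibration $X$ admits a smooth projective morphism $f\colon X\to C$ onto a positive-genus curve whose fibres are again Kodaira fibrations, the induction bottoming out at positive-genus curves. Since a positive-genus curve is aspherical and a fibre bundle with aspherical base and fibre is aspherical, every Kodaira fibration is a $K(\pi,1)$, so $\pi_2(X)=0$. It therefore remains only to show that $\pi_1(X)$ is residually finite. Here I would induct on $\dim X$ using the homotopy exact sequence $1\to\pi_1(F)\to\pi_1(X)\to\pi_1(C)\to 1$ of $f$, which is exact on the left because $\pi_2(C)=0$. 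The base group $\pi_1(C)$ is a surface group and $\pi_1(F)$ is a Kodaira fibration group, so both are residually finite (the former classically, the latter by the inductive hypothesis); if the sequence stays exact after profinite completion, namely $1\to\hat\pi_1(F)\to\hat\pi_1(X)\to\hat\pi_1(C)\to 1$, then separating a nontrivial $g\in\pi_1(X)$ in a finite quotient of $C$ when its image downstairs is nontrivial, and otherwise in a finite quotient coming from the injection $\hat\pi_1(F)\hookrightarrow\hat\pi_1(X)$, yields residual finiteness of $\pi_1(X)$, and (e) finishes the proof.

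The main obstacle is precisely this profinite exactness, equivalently the injectivity of $\hat\pi_1(F)\to\hat\pi_1(X)$, i.e.\ that the profinite topology of $\pi_1(X)$ induces the full profinite topology on the fibre group $\pi_1(F)$. For a single surface-by-surface group this is classical, but to run the induction I would need to propagate through every stage the two properties that make it work, namely that each intermediate Kodaira fibration group is good in the sense of Serre and has finite continuous cohomology. Carrying out this cohomological bookkeeping, so as to guarantee exactness of the profinite fibre sequence at each level, is the delicate part; granting it, Koll\'ar-hyperbolicity of all three classes follows formally from Proposition~\ref{prop:khyp}.
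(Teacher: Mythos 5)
Your proposal follows essentially the same route as the paper's proof: all three classes are deduced from Proposition \ref{prop:khyp}, with abelian varieties and compact locally symmetric quotients handled by asphericality (or nonpositive curvature) plus residual finiteness via part (e), subvarieties and finite Albanese maps transferred by part (b), and Kodaira fibrations treated by induction using the homotopy exact sequence of the fibration, exactly as in the paper. The one divergence is to your credit: the paper simply asserts that residual finiteness of $\pi_1(X)$ ``follows by induction'' from the extension $1\to\pi_1(F)\to\pi_1(X)\to\pi_1(C)\to 1$, whereas you correctly flag that residual finiteness is not in general closed under extensions (Deligne's central extensions of $\mathrm{Sp}(2g,\Z)$ by finite groups are not residually finite), and the ingredient you leave ``granted'' is indeed standard and closes the gap: surface groups are good in the sense of Serre, goodness is preserved under extensions with finitely generated good kernel whose cohomology with finite coefficients is finite (Serre's criterion, satisfied inductively by Kodaira fibration groups), and for a good residually finite quotient $Q$ every finite-by-$Q$ group is residually finite, so the characteristic-subgroup argument you sketch does yield residual finiteness at each stage.
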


First recall from the introduction that we define Kodaira fibrations inductively as projective manifolds admitting a smooth projective map to a curve of postive
genus with Kodaira fibrations as fibres.
We do not insist that the fibrations be nonisotrivial, so we can simply take products of curves,  however there are nontrivial examples as well \cite{bhpv, jab}.

\begin{proof}
Abelian varieties and locally symmetric spaces of noncompact type 
are aspherical, i.e. they have vanishing higher homotopy groups, with residually finite fundamental groups. Therefore
they are Koll\'ar-hyperbolic by (e). Alternatively, we can also use (c).
Therefore any normal variety with a finite map to either of these is Koll\'ar-hyperbolic by (b).

By definition a  Kodaira fibration $X$ is a bundle over a curve $C$ of genus $g>0$ with fibre $F$ a Kodaira fibration of lower dimension.
The homotopy exact sequence
$$\ldots \pi_i(F)\to \pi_i(X)\to \pi_i(C)\ldots$$
 and induction shows that $X$ aspherical.
 At the tail end, we have an exact sequence
 $$1\to \pi_1(F)\to \pi_1(X)\to \pi_1(C)\to 1$$
 Since the group on the right is residually finite (e.g. by Malcev's theorem), the residual  finiteness of $\pi_1(X)$ follows by induction. Therefore $X$ is Koll\'ar-hyperbolic by (e)
 
\end{proof}

There are many more examples of  Koll\'ar-hyperbolic varieties:

\begin{cor}\label{cor:finKhyp}
Given  a normal projective variety $X$, we can find a  Koll\'ar-hyperbolic variety $Y$ and  a finite map $Y\to X$. If $X$ is smooth,
$Y$ can be chosen smooth.
\end{cor}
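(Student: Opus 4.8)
The plan is to build $Y$ as (the normalisation of) a fibre product that admits a \emph{finite} morphism to a product of positive-genus curves, and then to quote Proposition \ref{prop:khyp}(b). Fix a projective embedding $X\subset \PP^N$ and set $d=\dim X$. A general linear projection restricts to a finite morphism $\pi\colon X\to \PP^d$ (Noether normalisation). On the target side I would choose curves $C_1,\dots,C_d$ of genus $\ge 1$, each with a chosen finite map $C_i\to \PP^1$; then $C_1\times\cdots\times C_d\to (\PP^1)^d$ is finite, and composing with the finite quotient $(\PP^1)^d\to (\PP^1)^d/S_d=\mathrm{Sym}^d\,\PP^1=\PP^d$ produces a finite morphism $q\colon T\to \PP^d$, where $T=C_1\times\cdots\times C_d$. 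Finally I would let $Y$ be the normalisation of an irreducible component of $X\times_{\PP^d}T$ dominating $X$; such a component exists because $q$, hence the base change $X\times_{\PP^d}T\to X$, is finite and surjective.

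The point of this construction is that both projections stay finite. Since $q$ is finite, so is $X\times_{\PP^d}T\to X$; restricting to the chosen component and normalising keeps the map to $X$ finite and surjective, and $Y$ is an irreducible normal projective variety. Symmetrically, base-changing the finite map $\pi$ along $q$ shows $X\times_{\PP^d}T\to T$ is finite, so $Y\to T$ is finite over its image. Now $T$ is Koll\'ar-hyperbolic: its Albanese map $T\to \prod_i \mathrm{Jac}(C_i)$ is a closed immersion, since each $C_i$ has genus $\ge 1$ and embeds in its Jacobian, so $T$ has finite Albanese (alternatively one applies Proposition \ref{prop:khyp}(b) to the finite map $T\to \prod_i\mathrm{Jac}(C_i)$ and the Koll\'ar-hyperbolicity of abelian varieties established above). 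Feeding the finite map $Y\to T$ into Proposition \ref{prop:khyp}(b) then yields that $Y$ is Koll\'ar-hyperbolic. No generic vanishing or control of $\pi_1$ of fibres is needed: every nonconstant curve in $Y$ maps, through the finite morphism $Y\to T$, to a nonconstant curve in the Koll\'ar-hyperbolic variety $T$.

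The hard part will be the smooth case. When $X$ is smooth I would try to arrange $Y$ smooth by a general-position argument, and this is where the real work lies. The branch divisor $\Delta$ of $\pi$ and the discriminant $\mathcal D\subset \PP^d$ of $q$ are both hypersurfaces, so for $d\ge 2$ they necessarily meet, and over $\Delta\cap\mathcal D$ the naive fibre product acquires singularities (already the local model $\{x^2=s\}\times_s\{u^2=s\}$ is a node). The difficulty is to choose $\pi$ (using the $\mathrm{PGL}_{d+1}$-action on $\PP^d$ to put $\Delta$ in general position against the fixed divisor $\mathcal D$) and the ramification orders of the maps $C_i\to \PP^1$ so that these singularities are resolved by the normalisation $Y$—for instance by making the ramification of $q$ along $\mathcal D$ coprime to that of $\pi$ along $\Delta$, so that the analytic branches are unibranch—or, failing that, by passing to a further finite cover in the style of Kawamata's covering lemma to turn the fibre product into one whose normalisation is smooth while keeping the map to $X$ finite. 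I expect the smoothness of $Y$, equivalently the control of the fibre product over $\Delta\cap\mathcal D$, to be the main technical obstacle; the Koll\'ar-hyperbolicity of $Y$ itself is immediate from the finite map to $T$.
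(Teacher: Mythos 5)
Your first two paragraphs (the normal case) are correct, though you take a different route from the paper: you build a finite correspondence $X \leftarrow Y \to T$ by Noether normalisation and a fibre product over $\PP^d$, whereas the paper takes a smooth Koll\'ar-hyperbolic $d$-fold $Z$ (your $T=C_1\times\cdots\times C_d$ would do, as would an abelian variety) and lets $Y\subset X\times Z$ be a complete intersection of $d$ very ample divisors in general position; both projections of such a $Y$ are finite, and Proposition \ref{prop:khyp}(b) applied to $Y\to Z$ finishes, exactly as in your argument. So for the normal case the two proofs are morally the same reduction, packaged differently.

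The genuine gap is the smooth case, which you correctly identify as unresolved but then leave open, and the repairs you sketch do not work as stated. Normalisation cannot be expected to produce a smooth $Y$ in dimension $\ge 2$, because normal does not imply smooth there: if the branch divisor $\Delta$ of $\pi$ and the discriminant $\mathcal{D}$ of $q$ are tangent at a point where both covers are local double covers, the fibre product has local model $u^2=x^2+t^2$, a quadric cone, which is already normal — the normalisation leaves the singularity untouched. Your coprime-ramification idea only makes points unibranch, and unibranch normal points can still be singular (Brieskorn-type models $u^a=x^b+\cdots$). Nor can general position fully save you: $\Delta$ is the branch divisor of a generic projection of a smooth variety and has unavoidable singularities (cuspidal loci), and for $d\ge 3$ a general $\mathrm{PGL}_{d+1}$-translate cannot move $\mathrm{Sing}(\Delta)$ off the hypersurface $\mathcal{D}$. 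The clean fix — and the paper's actual proof — is to notice that your fibre product is cut out inside $X\times T$ by $d$ very special divisors (the graph conditions $\pi(x)=q(t)$), and to replace them by $d$ \emph{general} very ample divisors: Bertini then gives $Y$ normal in general and smooth when $X\times T$ is smooth, i.e.\ when $X$ is smooth, while keeping both projections finite, so Proposition \ref{prop:khyp}(b) applies with no ramification analysis at all.
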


\begin{proof}
Let $d = \dim X$.
Choose  a smooth Koll\'ar-hyperbolic variety $Z$ with $\dim Z= d$ (such as one of the examples already constructed).
Let  $Y\subset X\times Z$ be a complete intersection of $d$ very ample divisors in general position.
By Bertini, $Y$ is normal and smooth if $X$ is smooth.
 Both projections
$Y\to X$ and $Y\to Z$ are finite. Therefore  $Y$ is Koll\'ar-hyperbolic by   the proposition. 
\end{proof}

\begin{cor}\label{cor:alb}
If $X$ is smooth, then $(X, [\hat \pi_1(X), \hat \pi_1(X)])$ is Koll\'ar-hyperbolic if and only if the Albanese map is finite over it image.
\end{cor}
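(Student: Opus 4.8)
The plan is to compare the maximal abelian quotient of $\hat\pi_1(X)$ with the fundamental group of the Albanese variety. Write $a\colon X\to A$ for the Albanese map, with $A = \mathrm{Alb}(X)$. Since $\pi_1(X)$ is finitely generated, $\hat\pi_1(X)/[\hat\pi_1(X),\hat\pi_1(X)]$ is the profinite completion of $H_1(X,\Z)=\pi_1(X)^{ab}$, while $\hat\pi_1(A)$ is the profinite completion of $H_1(X,\Z)/(\mathrm{torsion})$. Thus the natural map $a_*$ induces a surjection $\hat\pi_1(X)/[\hat\pi_1(X),\hat\pi_1(X)]\to \hat\pi_1(A)$ with finite kernel; concretely $\widehat{H_1(X,\Z)}\cong \hat\pi_1(A)\times T$ with $T$ the finite torsion subgroup of $H_1(X,\Z)$. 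Consequently, for any map $f\colon C\to X$ from a curve, the image of $\hat\pi_1(C)$ in $\hat\pi_1(X)/[\hat\pi_1(X),\hat\pi_1(X)]$ is infinite if and only if its image in $\hat\pi_1(A)$ is infinite, and by functoriality the latter is the image of $(a\circ f)_*$.

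Given this, I would first record the elementary geometric fact that $a$ is finite over its image if and only if it contracts no curve, i.e.\ $a\circ f$ is nonconstant for every nonconstant $f\colon C\to X$. Indeed, a proper morphism is finite over its image exactly when its fibres are finite: a positive-dimensional fibre contains an irreducible curve whose normalization maps nonconstantly to $X$ but is collapsed by $a$, and conversely a contracted curve forces a positive-dimensional fibre.

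For the direction $(\Leftarrow)$, assume $a$ is finite over its image. If $f\colon C\to X$ is nonconstant, then $a\circ f$ is nonconstant by the fact above, so its image in $\hat\pi_1(A)$ is infinite, because abelian varieties are Koll\'ar-hyperbolic with respect to the trivial subgroup (established above; a nonconstant map from a curve to $A$ generates a positive-dimensional abelian subvariety and hence has infinite image on $\hat\pi_1$). By the first paragraph the image of $\hat\pi_1(C)$ in $\hat\pi_1(X)/[\hat\pi_1(X),\hat\pi_1(X)]$ is then infinite, so $(X,[\hat\pi_1(X),\hat\pi_1(X)])$ is Koll\'ar-hyperbolic by Proposition \ref{prop:defkhyp}(a). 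Alternatively one can quote Proposition \ref{prop:khyp}(b) applied to $a$, noting that $[\hat\pi_1(X),\hat\pi_1(X)]$ is contained in the preimage of the trivial subgroup of $\hat\pi_1(A)$. For $(\Rightarrow)$ I argue the contrapositive: if $a$ is not finite over its image, the geometric fact produces a nonconstant $f\colon C\to X$ with $a\circ f$ constant, so the image of $\hat\pi_1(C)$ in $\hat\pi_1(A)$ is trivial; by the first paragraph its image in $\hat\pi_1(X)/[\hat\pi_1(X),\hat\pi_1(X)]$ then lies in the finite group $T$, hence is finite, contradicting Koll\'ar-hyperbolicity with respect to $[\hat\pi_1(X),\hat\pi_1(X)]$.

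The routine parts are the bookkeeping of the torsion subgroup and the standard equivalence ``finite over image $\Leftrightarrow$ finite fibres $\Leftrightarrow$ no contracted curve.'' The one genuine input is that a nonconstant curve in an abelian variety has infinite profinite monodromy, i.e.\ that $A$ is Koll\'ar-hyperbolic with respect to the trivial subgroup; this is exactly the property already used to list abelian varieties among the examples, so the main step is simply transporting it across the Albanese map while keeping track of the harmless torsion in $H_1(X,\Z)$.
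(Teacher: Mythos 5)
Your argument is correct and takes essentially the same route as the paper: the ``if'' direction transports the Koll\'ar-hyperbolicity of the abelian variety back through the finite Albanese map (proposition \ref{prop:khyp}(b)), and the ``only if'' direction exhibits a curve contracted by the Albanese map, whose normalization then has finite (indeed torsion) image in the abelianized fundamental group, contradicting proposition \ref{prop:defkhyp}(a). The only cosmetic difference is that you keep track of the finite torsion subgroup inside $\widehat{H_1(X,\Z)}$, where the paper simply passes to $H_1(\cdot,\C)$ coefficients to the same effect.
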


\begin{proof}
 One direction has essentially been proved (it follows form part (b) of the proposition). Suppose $(X, [\hat \pi_1(X), \hat \pi_1(X)])$ is Koll\'ar-hyperbolic.
 This easily implies that $H_1(C, \C)\to H_1(X,\C)$ is nonzero for any nontrivial map $C\to X$ from a smooth curve.
 If $alb:X\to alb(X)\subset Alb(X)$ was not finite, then we could find possibly singular curve $C'\subset X$ which gets contracted by $alb$.
 The map from the normalization  $(C')^n\to X$ would induce the zero map on $H_1$, and this is a contradiction.
\end{proof}
\section{V-hyperbolic varieties}

Let $X$ be a  connected variety with universal cover $\tilde X$.
Given a tower of \'etale covers
$$
\xymatrix{
 \ldots \ar[r]&X_3\ar[r]\ar[rrd]^{\pi_3} & X_2\ar[r]\ar[rd]^{\pi_2} & X_1\ar[d]^{\pi_1} \\ 
 & &  & X
}
$$
 we will say that it is connected if each $X_n$ is connected, Galois if each $X_n\to X$ is Galois, and unbounded if $\deg \pi_n\to \infty$.
A connected  (Galois) tower $\pi_n: X_n\to X$ is  determined by  a sequence 
$$\hat \pi_1(X)\supseteq \Gamma^1\supseteq\ldots$$
of open (normal) subroups  such that $X_n= \tilde X/\Gamma^n\cap \pi_1(X) $. Open subgroups of a profinite group are closed, therefore
 $H=\bigcap \Gamma^i$ is closed. Conversely, any such tower arises this way. If $H\subset \hat \pi_1(X)$ is a closed normal
subgroup, we refer to a connected   tower as above, with $\bigcap \Gamma^i = H$, simply as an {\em $H$-tower.}
An $H$-tower is unbounded if and only if $H$ has infinite index.

We will make use of the following lemma in various arguments. 

\begin{lemma}\label{lemma:restHtower}
Let $f:Y\to X$ be  a morphism of connected varieties, $H\subset \hat \pi_1(X)$ a closed normal subgroup, and let $G= f^{-1}(H)\subset \hat \pi_1(Y)$. 
If $X_n\to X$ is  an $H$-tower,  then we can find a $G$-tower $Y_n\to Y$, such that  each $X_n\times_X Y$ decomposes, as an \'etale cover of $Y$, into a disjoint union of a  finite number of copies of $Y_n$.
% If $Y_n\to Y$ is a $G$-tower, then there exists an $H$-tower $X_n\to X$, such that each  $X_n\times_X Y$ decomposes into a disjoint union of  copies of $Y_n$.
%For any complex of sheaves  $P$ on $Y$, $h^i(X_n\times_X Y, P)= r_n h^i(Y_n, P)$.
\end{lemma}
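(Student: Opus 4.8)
The plan is to pull the tower back along $f$ at the level of (profinite) fundamental groups. Write $\Gamma^n\subseteq\hat\pi_1(X)$ for the open subgroups defining the given tower, so that $X_n$ is the connected \'etale cover attached to $\Gamma^n$ and $\bigcap_n\Gamma^n=H$. I would set
$$\Delta^n=f^{-1}(\Gamma^n)\subseteq\hat\pi_1(Y)$$
and let $Y_n\to Y$ be the connected \'etale cover attached to $\Delta^n$. Each $\Delta^n$ is open, being the preimage of an open subgroup under the continuous homomorphism $f$; the $\Delta^n$ are decreasing; and
$$\bigcap_n\Delta^n=f^{-1}\Big(\bigcap_n\Gamma^n\Big)=f^{-1}(H)=G.$$
Hence $Y_n\to Y$ is a $G$-tower. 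Since $H$ is normal, $G=f^{-1}(H)$ is automatically normal in $\hat\pi_1(Y)$, consistent with the running hypotheses.

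To produce the decomposition I would argue on a fibre. Fix a base point $y_0\in Y$ and put $x_0=f(y_0)$. The fibre of $X_n\times_X Y\to Y$ over $y_0$ is canonically the fibre of $X_n\to X$ over $x_0$, i.e. the coset space $\hat\pi_1(X)/\Gamma^n$, now viewed as a $\hat\pi_1(Y)$-set through $f$. Connected components of $X_n\times_X Y$ correspond to $\hat\pi_1(Y)$-orbits, equivalently to the double cosets $f(\hat\pi_1(Y))\backslash\hat\pi_1(X)/\Gamma^n$, of which there are only finitely many because $\Gamma^n$ has finite index. The stabiliser of the coset $g\Gamma^n$ is $f^{-1}(g\Gamma^n g^{-1})$, so the component through $g\Gamma^n$ is the connected cover of $Y$ attached to $f^{-1}(g\Gamma^n g^{-1})$. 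It therefore suffices to match each of these subgroups, up to conjugacy in $\hat\pi_1(Y)$, with $\Delta^n$.

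The crux, and the step I expect to be the main obstacle, is exactly this matching of every component with the single cover $Y_n$. It is transparent when the tower is Galois: if $\Gamma^n$ is normal then $g\Gamma^n g^{-1}=\Gamma^n$ for every $g$, every component is the cover attached to $\Delta^n=f^{-1}(\Gamma^n)$, and $X_n\times_X Y$ is a disjoint union of $[\hat\pi_1(X):f(\hat\pi_1(Y))\Gamma^n]$ copies of $Y_n$; normality of $\Gamma^n$ also makes $\Delta^n$ normal, so the resulting $G$-tower is again Galois. For a tower that is only connected the subgroups $f^{-1}(g\Gamma^n g^{-1})$ need not be conjugate in $\hat\pi_1(Y)$ --- one checks that $f^{-1}(g\Gamma^n g^{-1})=y_0\Delta^n y_0^{-1}$ precisely when $g=f(y_0)$ lies in the image of $f$ --- so distinct double cosets can yield non-isomorphic components. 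I would therefore apply the lemma to Galois towers, which is the relevant case since the covers feeding L\"uck's theorem are Galois, and read the hypothesis accordingly; the two displayed identities above then give the $G$-tower and the fibre analysis gives the decomposition, completing the argument.
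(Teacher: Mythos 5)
Your construction is exactly the ``standard covering space theory'' that the paper's one-sentence proof appeals to: $\Delta^n=f^{-1}(\Gamma^n)$ is open and decreasing with $\bigcap_n\Delta^n=G$, and the identification of the components of $X_n\times_X Y$ with the orbits of $\hat\pi_1(Y)$ on $\hat\pi_1(X)/\Gamma^n$, with stabilizers $f^{-1}(g\Gamma^ng^{-1})$, is correct; in the Galois case your argument is complete. More importantly, your suspicion about the merely connected case is justified, and the defect lies in the statement rather than in your analysis. To make your objection concrete: let $X$ be a curve with a surjection $\hat\pi_1(X)\to S_3$, let $T\neq T'$ be two subgroups of order two, let $Y\to X$ be the connected cover attached to the preimage of $T'$, and let $\Gamma^n$ be the preimage of $T$ intersected with a descending chain of open normal subgroups with trivial intersection. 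Then $H=\{1\}$ is closed and normal, so this is an $H$-tower in the paper's sense, yet already $X_1\times_X Y$ has two components, of degrees $1$ and $2$ over $Y$ (the orbits of a transposition on the three cosets of $T$), so no single $Y_1$ can serve. (One small inaccuracy: your ``precisely when'' should be ``whenever $g\in f(\hat\pi_1(Y))\Gamma^n$''; conjugacy is not a priori excluded for other $g$, though, as the example shows, it can indeed fail.)

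Where your proposal goes astray is the final move. Reading the lemma as a statement about Galois towers only does not square with how the paper uses it: the vanishing property is defined by quantifying over \emph{all} connected $H$-towers, the lemma is invoked in the proof of proposition \ref{prop:curve} for an arbitrary $H$-tower $C_n\to C$, and the proof of theorem \ref{thm:finVhyp} pushes a tower forward via $\Xi^n=f(\Gamma^n)$, which need not consist of normal subgroups even when the $\Gamma^n$ do; nor is there an evident reduction of the general vanishing property to the Galois case. The repair consistent with the paper is to weaken the conclusion rather than the hypothesis: each component of $X_n\times_X Y$ is the connected cover attached to some $f^{-1}(g\Gamma^ng^{-1})$; since $H$ is normal, $\bigcap_n g\Gamma^ng^{-1}=gHg^{-1}=H$ for every $g$, so each such subgroup contains $G$, and a short compactness argument (pass to a convergent subnet $g_n\to g$, note that eventually $g_n\Gamma^ng_n^{-1}\subseteq g\Gamma^mg^{-1}$ for each fixed $m$, and that a descending chain of open subgroups of bounded index stabilizes) shows that when $G$ has infinite index the degrees of the components tend to infinity uniformly in $g$. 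That weaker statement --- finitely many connected components whose degrees over $Y$ tend to infinity --- is all the applications require: in proposition \ref{prop:curve}, for instance, one bounds $h^{-1}(C_n,\pi_n^*P)$ by $r(P)$ times the number of components, and dividing by $\deg\pi_n$ leaves at most $r(P)$ over the minimal component degree.
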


\begin{proof}
 This follows from standard covering space theory.
\end{proof}

 Given a  projective connected but possibly reducible variety and a closed normal subgroup $H\subset \hat \pi_1(X)$,
 let us say that $X$ has the {\em vanishing property}  with respect to $H$, or that 
 $(X, H)$ has the {\em vanishing property}, if $H$ has infinite index and  for any $H$-tower $\pi_n:X_n\to X$, for any  $P\in Perv_{HM}(X)$ and $i\not=0$,
$$\lim_{n\to \infty} \frac{h^i(X_{n}, \pi_n^*P)}{\deg \pi_n} = 0$$
We will say that $X$ is {\em V-hyperbolic}  with respect to $H$, or that 
 $(X, H)$ has is {\em  V-hyperbolic }, if $X$ is normal and  $(X,H)$ has the vanishing property.
We will say that $X$ is  V-hyperbolic if $(X,H)$ is V-hyperbolic for some $H$.
The reason for the terminology is the following:

\begin{lemma}
If $(X,H)$ is  as above is V-hyperbolic, 
then $(X,H)$ is Koll\'ar-hyperbolic. 
\end{lemma}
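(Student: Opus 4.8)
The plan is to argue by contraposition: I will assume that $(X,H)$ is \emph{not} Koll\'ar-hyperbolic and manufacture a single perverse sheaf on $X$ whose pullbacks along some $H$-tower carry cohomology in degree $1$ that grows linearly in the degree of the cover, contradicting the vanishing property. By Proposition \ref{prop:defkhyp}(a), the failure of Koll\'ar-hyperbolicity produces a nonconstant map $f\colon C\to X$ from a (connected) smooth projective curve for which $\im[\hat\pi_1(C)\to\hat\pi_1(X)/H]$ is finite. Since $f$ is nonconstant it factors as $C\to C'\hookrightarrow X$ with $C\to C'=f(C)$ finite onto the image curve; hence $Rf_*$ is $t$-exact for perverse sheaves and $P:=Rf_*\Q_C^H[1]$ is a genuine object of $Perv_{HM}(X)$, not merely a complex. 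This $P$ is the perverse sheaf I would test against the vanishing condition.

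Next I would fix any $H$-tower $\pi_n\colon X_n\to X$ (one exists, and is unbounded, because $H$ is a closed normal subgroup of infinite index). Put $G=f^{-1}(H)\subset\hat\pi_1(C)$ and form $W_n:=X_n\times_X C$ with projection $p_n\colon W_n\to X_n$. Proper base change, valid since $f$ is proper, gives an identification of underlying complexes $\pi_n^*P\cong R(p_n)_*\Q_{W_n}[1]$, so that
$$ h^i(X_n,\pi_n^*P)=h^{i+1}(W_n,\Q). $$
By Lemma \ref{lemma:restHtower} there is a $G$-tower $Y_n\to C$ for which $W_n$ is a disjoint union of $m_n$ copies of the connected smooth projective curve $Y_n$, whence $h^{i+1}(W_n,\Q)=m_n\,h^{i+1}(Y_n,\Q)$.

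Finally I would count degrees and specialize to $i=1$. Since $\pi_n$ is \'etale of degree $\deg\pi_n$, so is $W_n\to C$; writing $d_n=\deg(Y_n\to C)$ this gives $\deg\pi_n=m_n d_n$. Because $\im[\hat\pi_1(C)\to\hat\pi_1(X)/H]\cong\hat\pi_1(C)/G$ is finite, every open subgroup occurring in the $G$-tower contains $G$, so $d_n\le[\hat\pi_1(C):G]<\infty$ stays bounded. Taking $i=1$ and using $h^2(Y_n,\Q)=1$ for the connected curve $Y_n$, I obtain
$$ \frac{h^1(X_n,\pi_n^*P)}{\deg\pi_n}=\frac{m_n}{\deg\pi_n}=\frac{1}{d_n}\ge\frac{1}{[\hat\pi_1(C):G]}>0, $$
so the limit in the degree $i=1\neq0$ is bounded away from zero. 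This contradicts the vanishing property of $(X,H)$, which completes the contrapositive.

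The delicate points — and the ones I expect to be the main obstacle — are, first, verifying that $P$ really lands in $Perv_{HM}(X)$ rather than being a complex (this forces the reduction to the finite map $C\to C'$ and the $t$-exactness of $Rf_*$), and second, the boundedness of $d_n$. The latter is exactly the mechanism by which the hypothesis transfers: it is the finiteness of the image in $\hat\pi_1(X)/H$ that keeps $d_n$ bounded, and therefore makes the number of components $m_n$, and with it $h^1(X_n,\pi_n^*P)$, grow like $\deg\pi_n$.
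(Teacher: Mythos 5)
Your proof is correct, and it is built on the same central idea as the paper's: test the vanishing property against $P=f_*\Q_C^H[1]$ and derive a contradiction from the fact that the number of connected components of the pulled-back curve grows linearly in $\deg\pi_n$. The mechanics differ, though. The paper negates the definition directly, obtaining a curve whose map $\hat\pi_1(C)\to\hat\pi_1(X)/H$ is \emph{trivial} (not merely of finite image); it then picks a Galois $H$-tower, lifts $f$ compatibly to maps $f_n\colon C\to X_n$, writes $\pi_n^*P=\bigoplus_{g\in G_n}g_*f_{n*}\Q_C[1]$, and reads off $h^{-1}(X_n,\pi_n^*P)/\deg\pi_n\to h^0(C,\Q)=1$ in degree $i=-1$. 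You instead work from the finite-image characterization of Proposition \ref{prop:defkhyp}(a), never lift $f$, and use proper base change together with Lemma \ref{lemma:restHtower} and the boundedness $d_n\le[\hat\pi_1(C):G]$ of the induced curve tower, concluding in degree $i=+1$ via $h^2$ of the components. Your route is somewhat more robust: it applies to an arbitrary $H$-tower (no Galois choice needed), it handles the finite-image case directly rather than reducing to the trivial case, and it would survive even a weak reading of Lemma \ref{lemma:restHtower} in which the components of $X_n\times_X C$ are not all isomorphic, since each component's subgroup contains $G$ and so has index at most $[\hat\pi_1(C):G]$, giving $m_n\ge\deg\pi_n/[\hat\pi_1(C):G]$ and the same lower bound $1/[\hat\pi_1(C):G]$ on the ratio. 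The price is extra machinery (base change, the restriction lemma) where the paper's lifting trick yields the limit exactly $1$ in two lines; your attention to $P$ genuinely lying in $Perv_{HM}(X)$ via finiteness of $C\to f(C)$ is a point the paper uses silently.
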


\begin{proof}
 Let $(X,H)$ be V-hyperbolic. Suppose  $(X,H)$ is not Koll\'ar-hyperbolic.
 Then we can find a smooth projective curve $C$ and a nonconstant map $f:C\to X$ such that $\hat \pi_1(C)\to \hat \pi_1(X)/H$
 is trivial. Choose an $H$-tower $\pi_n:X_n\to X$ with $G_n = Gal(X_n/X)$. We can find a compatible family of lifts $f_n:C\to X_n$ of $f$.
 Let $P= f_*\Q_C[1]$. Then 
 $$\pi_n^*\Q_C[1] = \bigoplus_{g\in G_n} g_* f_{n*} \Q_{C}[1]$$
 Therefore
 $$\lim_{n\to \infty} \frac{h^{-1}(X_{n}, \pi_n^*P)}{\deg \pi_n} =  h^0(C, \Q)=1$$
\end{proof}

We conjecture  the converse.

\begin{conj}\label{conj:main}
Given a normal projective  variety and a closed normal subgroup $H\subset \hat \pi_1(X)$,  if $X$ is 
is Koll\'ar-hyperbolic with respect to $H$, then $X$ it is V-hyperbolic with  respect to $H$.
\end{conj}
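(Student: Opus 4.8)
The plan is to attack the conjecture by combining a structural reduction with a direct analytic vanishing, mirroring the strategy that already succeeds for classes (a) and (c). The first step would be to exploit the Shafarevich map: since $(X,H)$ being Koll\'ar-hyperbolic (with $H$ corresponding to $G\subset\pi_1(X)$) forces $Sh^G$ of \cite{kollar2} to be defined everywhere and an isomorphism, one tries to fibre $X$, up to finite and branched covers, over building blocks already known to be V-hyperbolic. The branched-cover stability (theorem \ref{thm:finVhyp}) lets one replace $X$ by a convenient cover, and the fibration theorem \ref{thm:key} lets one induct along a surjection $f:X\to C$ onto a positive genus curve whenever the general fibre is V-hyperbolic. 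Using lemma \ref{lemma:restHtower} to propagate a given $H$-tower down to the fibres (with $G=f^{-1}(H)$), the hope is to peel off such fibrations and reduce to a small list of base cases.

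The base cases split according to the three classes. When a Shafarevich variety of abelian type dominates, one reduces to theorem \ref{thm:av} on abelian varieties, whose proof runs through the Popa--Schnell generic vanishing theorem: the cohomology support loci of $\pi_n^*P$ have positive codimension in $\mathrm{Pic}^0$, so the normalized Betti numbers in an isogeny tower die off in nonzero degrees. The genuinely open base case is class (b), the locally symmetric part, where no fibration over a curve is available and the generic vanishing mechanism does not directly apply.

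For class (b), and indeed for a uniform treatment of the whole conjecture, I would pursue the $L^2$ route sketched in the introduction. By L\"uck's theorem the limit $\lim_{n} h^i(X_n,\pi_n^*P)/\deg\pi_n$ is an $L^2$-Betti number of an appropriate $L^2$-complex attached to $P$ on the tower, so the vanishing property becomes an $L^2$-cohomology statement. The goal is then a Gromov--Eyssidieux type theorem: on a Koll\'ar-hyperbolic $X$, the $L^2$-perverse cohomology of every $P\in Perv_{HM}(X)$ should vanish off degree $0$. Following \cite{gromov, eyss}, one would produce a bounded (or sublinear) primitive for the relevant polarization form after pulling back to $\tilde X_K$, run a Bochner and spectral-gap argument adapted to the self-dual perverse $t$-structure, and invoke the self-duality of the underlying perverse sheaf of a polarized Hodge module to concentrate cohomology in the middle degree.

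The hard part will be bridging the purely topological Koll\'ar-hyperbolicity hypothesis to the analytic input these arguments demand. Koll\'ar-hyperbolicity only asserts, via proposition \ref{prop:defkhyp}(b), that $\im[\hat\pi_1(Y^n)\to\hat\pi_1(X)/H]$ is infinite for every subvariety $Y$; it does not by itself furnish a K\"ahler metric with a bounded primitive on $\tilde X_K$, nor does it control the singularities of $X$ or the boundary behaviour of the Hodge metric of an arbitrary variation of Hodge structure. Manufacturing such analytic structure from the $\hat\pi_1$-infiniteness of subvarieties, or else replacing it by a von Neumann dimension estimate robust enough to hold under that hypothesis alone, is precisely the missing ingredient. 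This gap between the topological condition and the analytic condition (exactly the gap one sees in proposition \ref{prop:khyp}(d)--(e), where extra hypotheses are needed to pass from Koll\'ar-hyperbolicity back to a bounded primitive) is why class (b) and the general statement remain conjectural.
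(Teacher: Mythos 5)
You are addressing Conjecture \ref{conj:main}, which the paper does not prove: it is stated as open, and the paper supplies only partial evidence, namely theorem \ref{thm:av} (abelian varieties) combined with theorem \ref{thm:finVhyp} (stability under finite and branched covers) for class (a), and the fibration theorem \ref{thm:key} with induction for class (c); class (b) is left untouched. Your proposal correctly reproduces this architecture and candidly flags its own missing ingredient, so it matches the paper's actual state of knowledge; but it is a programme rather than a proof, and two of the bridges you propose fail concretely, not merely for lack of effort.

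First, the Shafarevich reduction is vacuous precisely in the situation at hand: when $(X,H)$ is Koll\'ar-hyperbolic, the paper notes that $Sh^G$ is defined everywhere and is an \emph{isomorphism}, i.e.\ $X$ is its own Shafarevich variety. The map contracts nothing, so it yields no fibration over lower-dimensional building blocks, and the induction via theorem \ref{thm:key} cannot even start for Koll\'ar-hyperbolic varieties with $b_1(X)=0$ (fake projective planes, general ball quotients): these admit no surjection onto a positive-genus curve and no nonconstant map to an abelian variety, which is exactly why class (b) remains open. Second, the $L^2$ route inverts the paper's logic and rests on an unavailable approximation theorem. L\"uck's theorem, as invoked in the paper, applies only to $P=\Q[d]$ on smooth $X$; there is no known result identifying $\lim_n h^i(X_n,\pi_n^*P)/\deg\pi_n$ with a von Neumann dimension for an arbitrary $P\in Perv_{HM}(X)$, and in the paper the $L^2$ vanishing is a \emph{corollary} of V-hyperbolicity, not a source of it. Moreover, your Gromov--Eyssidieux mechanism requires a bounded primitive for the pulled-back K\"ahler form, which fails even for abelian varieties (they are only K\"ahler nonelliptic, with sublinear primitives), so your uniform analytic strategy would not recover the one nontrivial base case the paper actually settles; there the proof runs through the Popa--Schnell generic vanishing theorem \ref{thm:ps}, the Sarnak--Adams equidistribution statement (corollary \ref{cor:sa}), and Saito's strictness (theorem \ref{thm:strictness}), with no $L^2$ theory at all. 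In short, your plan is a reasonable map of the terrain, but the gap you name at the end is the whole problem, and the two reductions you hope will shrink it do not.
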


Supporting evidence for this conjecture will be presented in the next few sections.  
In this section, we will record a number of consequences of V-hyperbolicity.
We start with the following.

\begin{prop}
 If $(X,H)$ is V-hyperbolic , then for any $H$-tower $\pi_n:X_n\to X$, for any  $P\in Perv_{MHM}(X)$ and $i\not=0$,
$$\lim_{n\to \infty} \frac{h^i(X_{n}, \pi_n^*P)}{\deg \pi_n} = 0$$
\end{prop}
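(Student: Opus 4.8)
The plan is to upgrade the vanishing property from pure Hodge modules ($Perv_{HM}$) to mixed Hodge modules ($Perv_{MHM}$) by induction on the length of the weight filtration $W_\dt$. Every $P\in Perv_{MHM}(X)$ underlies a mixed Hodge module $M$, which comes equipped with a finite increasing weight filtration $W_\dt$ whose associated graded pieces $Gr_k^W M$ lie in $HM(X,k)$, and hence underlie perverse sheaves in $Perv_{HM}(X)$. The key structural input is that taking $Gr^W$ is exact on $MHM(X)$, so the weight filtration induces a filtration of $P$ in $Perv(X)$ with graded pieces $Gr_k^W P \in Perv_{HM}(X)$. I would first record that $\pi_n^*$ is an exact functor $MHM(X)\to MHM(X_n)$ (Saito, \cite{saito2}), so that the pullback filtration $\pi_n^* W_\dt P$ is again a filtration by perverse subsheaves with $Gr_k^W(\pi_n^* P) = \pi_n^*(Gr_k^W P)$.

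The heart of the argument is the long exact sequences of hypercohomology attached to the short exact sequences of perverse sheaves
$$ 0 \to W_{k-1}\pi_n^*P \to W_k\pi_n^*P \to Gr_k^W \pi_n^*P \to 0. $$
For each fixed $k$ these give, in every degree $i$, an inequality
$$ h^i(X_n, W_k\pi_n^*P) \le h^i(X_n, W_{k-1}\pi_n^*P) + h^i(X_n, Gr_k^W\pi_n^*P). $$
Dividing by $\deg\pi_n$, summing telescopically over the (finitely many, uniformly bounded) weights $k$, and using that $(X,H)$ is V-hyperbolic so that each graded term $h^i(X_n, \pi_n^*(Gr_k^W P))/\deg\pi_n \to 0$ for $i\neq 0$, I would conclude that $h^i(X_n, \pi_n^*P)/\deg\pi_n \to 0$ for $i \neq 0$ as well. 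Here it is essential that the number of nonzero graded pieces is independent of $n$: the weight filtration of $M$ is a fixed finite filtration on $X$, and pullback does not lengthen it, so the telescoping sum has a fixed finite number of terms and the limit of a finite sum of terms each tending to $0$ is $0$.

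The main obstacle, and the only point requiring care, is the bookkeeping of the inequalities across degrees: a priori the long exact sequence mixes $h^i$ with $h^{i\pm 1}$ of the neighboring filtration steps. The clean way around this is to prove the statement not degree-by-degree but for the \emph{total} alternating-or-summed Betti data simultaneously in all nonzero degrees, exploiting that subadditivity of $h^i$ along short exact sequences holds in each degree separately. Concretely, from $0\to A\to B\to C\to 0$ in $Perv(X_n)$ one has $h^i(B)\le h^i(A)+h^i(C)$ directly from the long exact sequence (the connecting maps only help), so no degree shift enters the inequality, and the induction on weight length goes through cleanly. I therefore expect no genuine difficulty beyond verifying the exactness of $Gr^W$ and its compatibility with $\pi_n^*$, both of which are standard consequences of Saito's theory as summarized earlier in the excerpt.
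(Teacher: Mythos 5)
Your proposal is correct and is exactly the paper's argument: the paper's entire proof reads ``This follows immediately by induction on the length of the weight filtration $W$,'' and your write-up simply supplies the details the author deemed immediate (exactness of $Gr^W$, compatibility of the weight filtration with \'etale pullback, and the shift-free subadditivity $h^i(B)\le h^i(A)+h^i(C)$ from the long exact hypercohomology sequence of a short exact sequence of perverse sheaves). Your worry about degree shifts is indeed a non-issue for precisely the reason you give, so the induction closes as claimed.
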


\begin{proof}
 This follows immediately by induction on the length of  the weight filtration $W$.
\end{proof}

%\begin{thm}\label{thm:main}
%Let $X$ be  Koll\'ar-hyperbolic  with respect to  a closed subgroup $H\subset \hat \pi_1(X)$,
% $P\in Perv_{MHM}(X)$ and $\pi_n:X_n\to X$ an  $H$-tower of \'etale covers,
%then
%$$\lim_{n\to \infty}  \frac{h^i(X_n,\pi_n^*P)}{\deg \pi_n}= 0$$
%for $i\not=0$
% %Corollary \ref{cor:mhm} holds for nonsingular Koll\'ar hyperbolic varieties.
%\end{thm}

%\begin{rmk}
%An equivalent way to state this, which avoids reference to  $H$-towers, is that the function
%$$\Gamma \mapsto \frac{h^i(X_\Gamma, \pi_\Gamma^*P)}{[\pi_1(X):\Gamma]}$$
%converges to $0$ in the Moore-Smith sense, where $\Gamma$ runs over all finite index subgroups of $\pi_1(X)$, and $X_\Gamma= \tilde X/\Gamma\xrightarrow{\pi_\Gamma} X$ is the cover corresponding to $\Gamma$.
%\end{rmk}

The following corollary is both a special case and a generalization of the vanishing  theorems of Gromov \cite{gromov},  Cao-Xavier \cite{cx},  and Jost-Zuo \cite{jz}.
We also recall that the Singer conjecture \cite[chap 11]{luck2} states that the universal cover of an aspherical manifold has vanishing $L^2$-cohomology except in the middle degree. By \cite{atiyah}, this implies the Hopf-Singer conjecture that $(-1)^{\dim X}\chi(X)\ge 0$ under the same assumptions. See also
proposition \ref{prop:chiPge0}.

\begin{prop}
 Let $X$ be  a $d$-dimensional  V-hyperbolic smooth projective variety. Then the space of $L^2$  harmonic $k$-forms on $\tilde X_K$, with respect to the pull back of a K\"ahler metric from $X$,
 is zero for $k\not= d$.
 If conjecture \ref{conj:main} holds, then  the Singer conjecture holds when $X$ is an aspherical projective manifold with residually finite fundamental group.
\end{prop}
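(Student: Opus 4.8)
The plan is to run the argument indicated in the introduction: feed the vanishing property the constant Hodge module and reinterpret the resulting Betti-number estimate as a statement about von Neumann dimensions. Let $H\subset\hat\pi_1(X)$ of infinite index witness the $V$-hyperbolicity of $X$, let $G\subset\pi_1(X)$ be its preimage, and fix an $H$-tower $\pi_n\colon X_n\to X$. Apply the vanishing property to $P=\Q_X^H[d]\in Perv_{HM}(X)$, whose underlying perverse sheaf is $\Q_X[d]$ since $X$ is smooth. As $\pi_n^*\Q_X[d]=\Q_{X_n}[d]$, we have $h^i(X_n,\pi_n^*P)=b_{i+d}(X_n)$, so the vanishing property reads $\lim_n b_k(X_n)/\deg\pi_n=0$ for every $k\ne d$. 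By L\"uck's approximation theorem \cite{luck} these normalized limits are the $L^2$-Betti numbers $b_k^{(2)}$ of the cover $\tilde X/G$ of $X$ attached to the tower, computed for the pulled-back metric; this cover is $\tilde X_K$ exactly when $H$ is trivial, in particular whenever $\pi_1(X)$ is residually finite and $H=\{1\}$ is admissible.

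It then remains to pass from vanishing of $b_k^{(2)}$ to vanishing of the harmonic space itself. Since $X$ is projective we may pull back a K\"ahler metric; the cover is complete and carries a free cocompact isometric action of $\pi_1(X)/G$, so by $L^2$-Hodge theory (cf. \cite{atiyah}) the space $\mathcal{H}^k_{(2)}$ of $L^2$ harmonic $k$-forms represents the reduced $L^2$-cohomology and is a Hilbert module over the group von Neumann algebra with $\dim_{\pi_1(X)/G}\mathcal{H}^k_{(2)}=b_k^{(2)}$. Because this dimension is built from the canonical faithful trace it is itself faithful: a Hilbert module of dimension zero is zero. Hence $b_k^{(2)}=0$ forces $\mathcal{H}^k_{(2)}=0$ for $k\ne d$, which is the first assertion (with $\tilde X_K$ read as the cover attached to the tower).

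For the second assertion, let $X$ be an aspherical projective manifold with residually finite $\pi_1(X)$. Asphericity gives $\pi_2(X)=0$, so by Proposition \ref{prop:khyp}(e) the manifold $X$ is Koll\'ar-hyperbolic, which by definition is the same as Koll\'ar-hyperbolic with respect to $H=\{1\}$. Residual finiteness gives $K(\pi_1(X))=\{1\}$, and $\pi_1(X)$ is infinite because an aspherical closed manifold of positive dimension has infinite fundamental group; hence $H=\{1\}$ has infinite index and Conjecture \ref{conj:main} makes $(X,\{1\})$ $V$-hyperbolic. Now any $\{1\}$-tower is a residual tower of finite covers, $\tilde X_K=\tilde X$ is the universal cover, and the first assertion applies verbatim: the $L^2$ harmonic forms on $\tilde X$ vanish outside degree $d$, which is the conclusion of the Singer conjecture.

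The step I expect to be the \emph{main obstacle} is the bridge supplied by L\"uck's theorem together with the faithfulness of the von Neumann dimension. One must know that the purely topological limits $\lim_n b_k(X_n)/\deg\pi_n$ genuinely equal the analytically defined $L^2$-Betti numbers of the correct cover — which needs the approximation theorem in the possibly non-residual situation $\bigcap\pi_1(X_n)=G$ — and that a vanishing $L^2$-Betti number kills the harmonic space itself rather than merely its dimension. Both inputs are available off the shelf: the first from \cite{luck}, the second from faithfulness of the canonical trace. Once the correct cover is pinned down, the remaining manipulations are formal.
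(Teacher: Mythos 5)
Your proposal is correct and follows essentially the same route as the paper's proof: apply the vanishing property to $\Q_X^H[d]$, invoke L\"uck's approximation theorem \cite{luck} to identify the normalized limits with $L^2$-Betti numbers, use faithfulness of the von Neumann trace to pass from vanishing dimension to vanishing harmonic space, and deduce the Singer statement from proposition \ref{prop:khyp}(e) together with conjecture \ref{conj:main}. Your caveat that the $L^2$-Betti numbers live on the cover $\tilde X/G$ attached to the tower, which equals $\tilde X_K$ only when $H$ is trivial (as in the residually finite setting of the Singer application), is if anything more careful than the paper's one-line citation of \cite{luck}.
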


\begin{proof}
We refer to \cite{luck2} for  the background on $L^2$-Betti numbers. Set $i= k-d$.
 L\"uck \cite{luck} showed that 
 $$b_k^{(2)}= \lim_{n\to \infty}  \frac{h^i(X_n,\pi_n^*\Q[d])}{\deg \pi_n}$$
 is the $k$th $L^2$-Betti number of $\tilde X_K$, i.e. the Von Neumann dimension of the space of $L^2$  harmonic $k$-forms. Since $b_k^{(2)}=0$ when $i\not=0$,
 this space must vanish. The last statement of the corollary follows from what has just been proved by proposition \ref{prop:khyp}.
 
\end{proof}

The next proposition was conjectured by Maxim, Wang and the author \cite{amw} for aspherical manifolds and for varieties
with nef cotangent bundles.  The last inequality  goes back to Gromov \cite{gromov} for K\"ahler-hyperbolic manifolds.

\begin{prop}\label{prop:mhm}
Let  $X$ be a $d$-dimensional V-hyperbolic projective manifold.
If $M\in MHM(X)$, then for any $p$
 $$\chi(X,Gr^p_F DR(M))\ge 0$$
and in particular
 $$(-1)^{d-p}\chi(\Omega_X^p)\ge 0$$ 
\end{prop}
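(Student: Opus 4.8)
The plan is to combine Saito's Hodge decomposition (Theorem~\ref{thm:strictness}) with the vanishing property defining V-hyperbolicity, and then to extract nonnegativity from a limit of nonnegative quantities. Fix an $H$-tower $\pi_n:X_n\to X$ with $d_n=\deg\pi_n\to\infty$, and let $P\in Perv_{MHM}(X)$ be the perverse sheaf underlying $M$. Two preliminary inputs need to be assembled. First, $\pi_n^*$ commutes with $Gr^p_F DR(-)$: since $\pi_n$ is \'etale and, by Saito's compatibility of pullback with the underlying filtered $D$-module on the smooth variety $X$ (for smooth $X$ one has $\pi_n^*(P,M,F,W,\alpha)=(\pi_n^*P,\pi_n^*M,\ldots)$), taking the de Rham complex and its $F$-associated graded commutes with the flat pullback, so $Gr^p_F DR(\pi_n^*M)=\pi_n^* Gr^p_F DR(M)$. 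Second, the coherent Euler characteristic is multiplicative under finite \'etale covers, $\chi(X_n,\pi_n^*\mathcal F)=d_n\,\chi(X,\mathcal F)$; this follows from Grothendieck--Riemann--Roch once one notes that \'etaleness gives $\pi_n^*\mathrm{td}(X)=\mathrm{td}(X_n)$, together with $\int_{X_n}\pi_n^*\alpha=d_n\int_X\alpha$.

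First I would apply Theorem~\ref{thm:strictness} on each $X_n$ to $\pi_n^*M$, which together with the first input yields
$$h^i(X_n,\pi_n^*P)=\sum_p \dim H^i\bigl(X_n,\pi_n^*Gr^p_F DR(M)\bigr).$$
Because the vanishing property (extended to $Perv_{MHM}$ by the preceding proposition) forces the left-hand side, divided by $d_n$, to tend to $0$ for $i\neq 0$, and because the right-hand side is a \emph{finite} sum of \emph{nonnegative} terms, each summand must separately satisfy
$$\lim_{n\to\infty}\frac{\dim H^i(X_n,\pi_n^*Gr^p_F DR(M))}{d_n}=0,\qquad i\neq 0.$$
This term-by-term vanishing, made possible by nonnegativity of dimensions and finiteness of the sum over $p$, is the crux of the argument.

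Next I would write the Euler characteristic of a single graded piece using the second input:
$$\chi(X,Gr^p_F DR(M))=\frac{1}{d_n}\chi\bigl(X_n,\pi_n^*Gr^p_F DR(M)\bigr)=\sum_i(-1)^i\frac{\dim H^i(X_n,\pi_n^*Gr^p_F DR(M))}{d_n}.$$
The left side is independent of $n$, so I may pass to the limit on the right; all $i\neq 0$ contributions die, leaving
$$\chi(X,Gr^p_F DR(M))=\lim_{n\to\infty}\frac{\dim H^0(X_n,\pi_n^*Gr^p_F DR(M))}{d_n}\ge 0,$$
a limit of nonnegative reals, which proves the first inequality. For the displayed consequence I would take $M=\Q_X^H[d]$; then Example~\ref{ex:Qd} gives $Gr^p_F DR(\Q_X^H[d])=\Omega_X^p[d+p]$, so that $\chi(X,Gr^p_F DR(M))=(-1)^{d+p}\chi(X,\Omega_X^p)$, and $(-1)^{d+p}=(-1)^{d-p}$ yields exactly $(-1)^{d-p}\chi(\Omega_X^p)\ge 0$.

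I expect the main obstacle to be the bookkeeping ensuring that $\pi_n^*$ genuinely commutes with $Gr^p_F DR$ and that multiplicativity of the coherent Euler characteristic holds for the complexes in $D^b_{coh}(X)$ (as opposed to a naive rank-times-$\chi$ statement, which is false for a general locally free twist and becomes correct only because $\pi_n$ is \'etale, via the Todd-class identity above). Once these two compatibilities are secured, the positivity is a soft consequence of splitting off the degree-zero term from the alternating sum.
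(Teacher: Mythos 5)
Your proof is correct and follows essentially the same route as the paper: Saito's decomposition (Theorem~\ref{thm:strictness}) transfers the V-hyperbolic vanishing from $\pi_n^*P$ to each $\pi_n^*Gr^p_F DR(M)$, Hirzebruch--Riemann--Roch gives multiplicativity of $\chi$ under the \'etale covers, and the degree-zero term survives as a nonnegative limit. The only cosmetic difference is that the paper hedges with a $\liminf$ and a subsequence where you correctly observe the limit of $h^0/\deg\pi_n$ exists outright once the $i\neq 0$ terms vanish.
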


\begin{proof}
Let $P$ be the perverse sheaf underlying  the mixed  Hodge module $M$.
By  \cite[thm 28.1]{schnell}  $H^i(Gr^p_F DR(\pi_n^*M) )$ is a subquotient of
 $H^i(X_n, \pi_n^*P) \otimes \C$. 
We can see that
$$Gr^p_F DR(\pi_n^*M)\cong \pi_n^* Gr^p_F DR(M)$$
Therefore, when $i\not=0$,
\begin{equation}\label{eq:hiGr}
 \lim_{n\to \infty} \frac{h^i(X_n, \pi_n^*Gr^ p_F DR(M) )}{\deg \pi_n} =0 
\end{equation}
by the theorem.
Let 
$$L = \liminf_n \frac{h^0(X_n, \pi_n^*Gr^ p_F DR(M) )}{\deg \pi_n} $$
After passing to a subsequence,
 $$ \frac{\chi(X_n, \pi_n^*Gr^p_FDR(M))}{\deg \pi_n}=\sum_i (-1)^i  \frac{h^i(X_n, \pi_n^*Gr^p_FDR(M))}{\deg \pi_n}\to L$$
By Hirzebruch-Riemann-Roch \cite{fulton}
$$
 \frac{\chi(X_n, \pi_n^*Gr^p_FDR(M))}{\deg \pi_n}= \chi(X,Gr^p_F DR(M))
 $$
and therefore  this must equal $L$.
 Since $L$ is nonnegative,  we obtain
  $$\chi(X,Gr^p_F DR(M))\ge 0$$
When $M=\Q^H[d]$, we get
$$(-1)^{d-p}\chi(\Omega_X^p)\ge 0$$ 
\end{proof}

\begin{cor}
 The  conjecture of  \cite{amw}  stated above is implied by conjecture \ref{conj:main}, when $X$ is an  aspherical projective variety with residually finite fundamental group.
 \end{cor}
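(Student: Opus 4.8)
The plan is to concatenate three statements already established or assumed above: the topological criterion of proposition \ref{prop:khyp}(e), the hypothetical implication of conjecture \ref{conj:main}, and the numerical consequence of V-hyperbolicity recorded in proposition \ref{prop:mhm}. The observation driving the argument is that an aspherical projective manifold $X$ with residually finite fundamental group satisfies the hypotheses of each of these in turn, so that the inequality $\chi(X, Gr^p_F DR(M))\ge 0$ then follows by a formal chain, with no new estimates required.

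First I would use asphericity. Since the universal cover of $X$ is contractible, all higher homotopy groups vanish, so in particular $\pi_2(X)=0$. Combined with the hypothesis that $\pi_1(X)$ is residually finite, proposition \ref{prop:khyp}(e) applies and shows that $X$ is Koll\'ar-hyperbolic. Unwinding the definitions, this is Koll\'ar-hyperbolicity of the pair $(X,\{1\})$, where $\{1\}\subset \hat\pi_1(X)$ is the trivial closed normal subgroup, since the unqualified notion is precisely $\hat\pi_1(C)\to \hat\pi_1(X)/\{1\}=\hat\pi_1(X)$ being nontrivial.

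Next I would verify that $\{1\}$ has infinite index, so that conjecture \ref{conj:main} may legitimately be applied to it and its conclusion (V-hyperbolicity, which by definition requires infinite index) is not vacuous. Here asphericity and residual finiteness combine again: a positive-dimensional aspherical manifold has infinite $\pi_1(X)$, since otherwise a finite cover would be a closed contractible positive-dimensional manifold, hence a point; and residual finiteness embeds this infinite group into $\hat\pi_1(X)$. Thus $\hat\pi_1(X)$ is infinite and $\{1\}$ has infinite index. Conjecture \ref{conj:main}, applied to $H=\{1\}$, then promotes $(X,\{1\})$ from Koll\'ar-hyperbolic to V-hyperbolic, so that $X$ is V-hyperbolic in the sense required by proposition \ref{prop:mhm}.

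Finally I would invoke proposition \ref{prop:mhm} directly: a V-hyperbolic projective manifold satisfies $\chi(X, Gr^p_F DR(M))\ge 0$ for every $M\in MHM(X)$ and every $p$, which is exactly the inequality conjectured in \cite{amw}. The only point requiring genuine care in this chain is the bookkeeping of the subgroup $H$: one must ensure that the subgroup for which proposition \ref{prop:khyp}(e) yields Koll\'ar-hyperbolicity is the same subgroup fed into conjecture \ref{conj:main}, and that it has infinite index so that V-hyperbolicity is meaningful. With $H=\{1\}$ both requirements are immediate from the two preceding paragraphs, and the remainder of the argument is a purely formal composition of the cited results.
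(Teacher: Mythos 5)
Your proof is correct and takes essentially the same route as the paper, whose entire argument is the one-line observation that the corollary follows from proposition \ref{prop:khyp} (part (e), using $\pi_2(X)=0$ from asphericity plus residual finiteness) together with proposition \ref{prop:mhm}, with conjecture \ref{conj:main} supplying the bridge from Koll\'ar-hyperbolicity to V-hyperbolicity. Your additional bookkeeping with $H=\{1\}$ --- checking it is closed, normal, and of infinite index in $\hat\pi_1(X)$ --- is just a careful spelling-out of details the paper leaves implicit.
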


\begin{proof}
 This follows from proposition \ref{prop:khyp} and the previous proposition.

\end{proof}
The next result extends the Arakelov inequalities of Eyssidieux \cite{eyss} (when $X$ is weakly K\"ahler-hyperbolic and $D=0$), and  Peters \cite{peters} and Jost-Zuo \cite{jz2} (when $X$ is 
curve). However, the results for curves in the last two papers are considerably stronger than ours.

\begin{prop}
 With the notation and assumptions of example \ref{ex:Vd}, when $X$ is V-hyperbolic we have the Arakelov inequality
\begin{equation*}
\begin{split}
& \sum_k (-1)^{d-k}\chi(\Omega_X^k(\log D)\otimes  Gr_{\bar F}^{p-k} \bar V)\\
&=\sum_k (-1)^{d-k}\chi(\mathcal{H}^k(\Omega_X^k(\log D)\otimes  Gr_{\bar F}^{p-k} V);\theta)) \ge 0 
\end{split}
\end{equation*}
The sheaves appearing the last line are the cohomology of the complex appearing in example \ref{ex:Vd}.
\end{prop}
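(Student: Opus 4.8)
The plan is to read off the inequality as an immediate instance of Proposition \ref{prop:mhm}, applied to the mixed Hodge module $M$ attached to $H$ in example \ref{ex:Vd}, together with the two standard ways of computing a hypercohomology Euler characteristic. Since $\R j_* L[d]$ underlies a mixed Hodge module $M \in MHM(X)$, Proposition \ref{prop:mhm} gives $\chi(X, Gr_F^p DR(M)) \ge 0$ for every $p$, so the entire content of the statement is to identify this Euler characteristic with the two sums displayed in the proposition.

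First I would invoke the explicit description in example \ref{ex:Vd}. Write $K^\bullet$ for the Higgs complex whose $k$-th term is $\Omega_X^k(\log D) \otimes Gr_{\bar F}^{p-k} \bar V$, placed in degree $k$, with differential $\theta$, so that $Gr_F^p DR(M) \cong K^\bullet[d]$ in $D^b_{coh}(X)$. The hypercohomology Euler characteristic of a bounded complex of coherent sheaves is additive along the stupid filtration, giving $\chi(X, K^\bullet) = \sum_k (-1)^k \chi(X, \Omega_X^k(\log D) \otimes Gr_{\bar F}^{p-k} \bar V)$; the shift by $[d]$ contributes a factor $(-1)^d$, and since $(-1)^{d+k} = (-1)^{d-k}$ this produces exactly the first displayed sum, now identified with $\chi(X, Gr_F^p DR(M))$. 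Computing the same Euler characteristic along the canonical filtration instead yields $(-1)^d \sum_k (-1)^k \chi(X, \mathcal{H}^k(K^\bullet))$, where $\mathcal{H}^k(K^\bullet)$ is the $k$-th cohomology sheaf of the Higgs complex; this is the second displayed sum. Both sums therefore equal $\chi(X, Gr_F^p DR(M)) \ge 0$, which is the asserted inequality.

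I expect no genuine obstacle here: the substance is supplied entirely by Proposition \ref{prop:mhm}, and what remains is bookkeeping. The only points needing care are the shift $[d]$ and the identity $(-1)^{d+k}=(-1)^{d-k}$, together with the remark that the two expressions for $\chi(X, K^\bullet)$ coincide because each computes the alternating trace on hypercohomology (equivalently, the two hypercohomology spectral sequences of $K^\bullet$ share the same Euler characteristic).
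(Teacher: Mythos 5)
Your proposal is correct and matches the paper's argument, which simply states that the proposition is a special case of Proposition \ref{prop:mhm} applied via the description of $Gr_F^p DR(M)$ in example \ref{ex:Vd}; your extra bookkeeping with the shift $[d]$, the sign identity $(-1)^{d+k}=(-1)^{d-k}$, and the equality of the two Euler characteristics (stupid versus canonical filtration) just makes explicit what the paper leaves implicit.
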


\begin{proof}
 This is a special case of the previous proposition.
\end{proof}

The next proposition, for arbitrary perverse sheaves, was proved by  Deng and Wang for Koll\'ar-hyperbolic manifolds with an (almost) faithful representation into a general linear group, and by Wang and the author \cite{aw}  with some additional assumptions. The methods are  completely different from what is used here.

\begin{prop}\label{prop:chiPge0}
 If $X$ is a  V-hyperbolic normal  projective variety and $P\in Perv_{MHM}(X)$, then 
 $$\chi(X, P)\ge 0$$
 If $X$ is also smooth of dimension $d$, then
 $(-1)^{d}\chi(X)\ge 0$. 
\end{prop}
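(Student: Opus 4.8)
The plan is to run the argument of Proposition \ref{prop:mhm} directly at the level of the perverse sheaf $P$, with the multiplicativity of the topological Euler characteristic under \'etale covers playing the role that Hirzebruch--Riemann--Roch played there. Fix a closed normal subgroup $H$ of infinite index witnessing the V-hyperbolicity of $X$, and let $\pi_n:X_n\to X$ be an $H$-tower. The key input is the identity
\begin{equation}\label{eq:chimult}
\chi(X_n,\pi_n^*P)=\deg(\pi_n)\cdot\chi(X,P),
\end{equation}
valid for any $P\in D_c^b(X)$, which replaces the coherent Riemann--Roch computation used previously.

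I would justify \eqref{eq:chimult} as follows. Since $\pi_n$ is finite \'etale of degree $\deg\pi_n$, the projection formula gives $\pi_{n*}\pi_n^*P\cong P\otimes_\Q \pi_{n*}\Q_{X_n}$, where $\pi_{n*}\Q_{X_n}$ is a local system of rank $\deg\pi_n$ on $X$. Tensoring a constructible complex by a local system of rank $r$ multiplies its Euler characteristic by $r$: one stratifies $X$ so that the local system is trivialized on each stratum, and the Euler characteristic is additive over strata and scales by the generic rank on each. Combining this with $\chi(X_n,\pi_n^*P)=\chi(X,\pi_{n*}\pi_n^*P)$ yields \eqref{eq:chimult}. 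This is the one step where the topological rather than coherent nature of $P$ enters, and it is the only point requiring genuine care.

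Dividing \eqref{eq:chimult} by $\deg\pi_n$ and expanding the left-hand side as an alternating sum of normalized Betti numbers gives
$$\chi(X,P)=\sum_i(-1)^i\,\frac{h^i(X_n,\pi_n^*P)}{\deg\pi_n}$$
for every $n$. By V-hyperbolicity, in the form extended to $Perv_{MHM}(X)$ established above, every term with $i\neq0$ tends to $0$ as $n\to\infty$. Since the left-hand side is a fixed integer independent of $n$, it follows that $h^0(X_n,\pi_n^*P)/\deg\pi_n\to\chi(X,P)$ with no need to pass to a subsequence; as each $h^0$ is nonnegative, the limit is nonnegative, so $\chi(X,P)\ge0$.

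For the final assertion, take $X$ smooth of dimension $d$ and apply the inequality to $P=\Q_X^H[d]\in Perv_{HM}(X)\subseteq Perv_{MHM}(X)$, whose underlying perverse sheaf is $\Q_X[d]$. A degree shift gives $\chi(X,\Q_X[d])=\sum_i(-1)^i h^{i+d}(X,\Q)=(-1)^d\chi(X)$, whence $(-1)^d\chi(X)=\chi(X,\Q_X[d])\ge0$. I do not expect any serious obstacle beyond \eqref{eq:chimult}: the structure simply mirrors Proposition \ref{prop:mhm}, trading Hirzebruch--Riemann--Roch for the \'etale multiplicativity of the constructible Euler characteristic, and the same mechanism covers the normal singular case without any smoothness hypothesis.
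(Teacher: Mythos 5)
Your proposal is correct and follows essentially the same route as the paper: both rest on the identity $\chi(X_n,\pi_n^*P)=(\deg \pi_n)\,\chi(X,P)$ (the paper's lemma \ref{lemma:eulercov}) combined with the vanishing property for $i\neq 0$, extended from $Perv_{HM}(X)$ to $Perv_{MHM}(X)$, and then apply the result to $\Q_X[d]$ in the smooth case. The only divergence is your justification of the multiplicativity identity via the projection formula and the rank-$(\deg\pi_n)$ local system $\pi_{n*}\Q_{X_n}$, where the paper instead uses a finite cover by open sets with simply connected intersections; both are standard, though note that a local system need not literally trivialize on strata---what your argument actually uses is that $\chi_c$ of a local system on a stratum depends only on its rank.
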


\begin{proof}
The proof is very similar to  the proof of proposition \ref{prop:mhm}. In outline, using the lemma given below and the theorem, we have
 $$\chi(X, P) = \frac{ \chi(X_n,\pi_n^*P)}{\deg \pi_n}  \to \liminf \frac{h^0(X_n, \pi_nP)}{\deg \pi_n}\ge 0$$
 When $X$ is smooth, $\Q_X[d]\in Perv_{HM}(X)$.
\end{proof}

\begin{lemma}\label{lemma:eulercov}
 If $P\in D_c^b(X)$ is an object in the bounded constructible derived category,
 $$\chi(X_n, \pi_n^* P) = (\deg \pi_n ) \chi(X, P)$$
\end{lemma}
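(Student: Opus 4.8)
The plan is to prove this multiplicativity by computing both Euler characteristics through a common algebraic stratification and exploiting that $\pi_n$ is a finite étale cover of degree $d := \deg \pi_n$. The two ingredients are: (i) for a constructible complex on a complex algebraic variety the ordinary and compactly supported Euler characteristics agree, and both are additive over locally closed algebraic pieces; and (ii) the compactly supported Euler characteristic is multiplicative for finite covering maps, so that $\chi_c(\pi_n^{-1}(S)) = d\,\chi_c(S)$ for any constructible $S \subset X$.

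First I would choose a finite algebraic (Whitney) stratification $X = \bigsqcup_\alpha S_\alpha$ such that every cohomology sheaf $\mathcal{H}^j(P)$ restricts to a local system on each stratum $S_\alpha$. Writing $e_\alpha = \sum_j (-1)^j \dim \mathcal{H}^j(P)_x$ for the (constant) stalk Euler characteristic of $P$ along $S_\alpha$, additivity gives $\chi(X,P) = \sum_\alpha \chi_c(S_\alpha)\, e_\alpha$, using that $\chi_c(S_\alpha, \mathcal{L}) = (\operatorname{rk}\mathcal{L})\,\chi_c(S_\alpha)$ for a local system $\mathcal{L}$. I would then pull the stratification back to $X_n = \bigsqcup_\alpha \pi_n^{-1}(S_\alpha)$. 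Because $\pi_n$ is étale, it is a local homeomorphism, so $\mathcal{H}^j(\pi_n^* P) = \pi_n^* \mathcal{H}^j(P)$ is locally constant on each $\pi_n^{-1}(S_\alpha)$ with the same stalk Euler characteristic $e_\alpha$. Applying the same formula on $X_n$ and invoking ingredient (ii) yields $\chi(X_n, \pi_n^* P) = \sum_\alpha \chi_c(\pi_n^{-1}(S_\alpha))\, e_\alpha = d \sum_\alpha \chi_c(S_\alpha)\, e_\alpha = d\,\chi(X,P)$, which is the claim.

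The main obstacle is really a matter of citing the correct foundational facts rather than any genuine difficulty: one must justify the additivity formula $\chi(X,P) = \sum_\alpha \chi_c(S_\alpha) e_\alpha$ and, implicitly, the identity $\chi = \chi_c$ for objects of $D_c^b(X)$ over $\C$. Both follow from the theory of constructible functions and the index theorem (Dubson--Kashiwara), or directly from the long exact sequences attaching the open and closed strata. As an alternative that avoids the stratification bookkeeping, I could argue that since $\pi_n$ is finite, $R\pi_{n*} = \pi_{n*}$ is exact, so $H^i(X_n, \pi_n^* P) \cong H^i(X, P \otimes \mathcal{L})$ with $\mathcal{L} = \pi_{n*}\Q_{X_n}$ a local system of rank $d$ by the projection formula; the claim then reduces to the statement that tensoring by a rank-$d$ local system multiplies the Euler characteristic by $d$, which is again ingredient (i). I expect the stratification argument to be the most transparent and self-contained.
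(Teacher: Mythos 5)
Your proof is correct, but it takes a genuinely different route from the paper's. The paper avoids stratification theory entirely: it chooses a finite open cover $\{U_i\}$ of $X$ all of whose multi-intersections $U_{i_0\ldots i_m}$ are simply connected, so that the \'etale cover trivializes over each of them and $(\pi_n^{-1}U_{i_0\ldots i_m}, \pi_n^*P)$ is a disjoint union of $\deg\pi_n$ copies of $(U_{i_0\ldots i_m}, P)$; it then sums via the \v{C}ech--Mayer--Vietoris formula $\chi(X_n,\pi_n^*P)=\sum_m(-1)^m\chi(\pi_n^{-1}U_{i_0\ldots i_m},\pi_n^*P)$. You instead localize at the level of a Whitney stratification adapted to $P$, using additivity of $\chi_c$ over locally closed pieces, the identity $\chi=\chi_c$ for constructible complexes on complex varieties, the rank-times-$\chi$ formula for local systems on strata, and multiplicativity of $\chi_c$ under finite covers. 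The paper's argument is more elementary and self-contained (only the hypercohomology \v{C}ech spectral sequence plus the existence of a good cover, which is itself a mild point one could fuss over), while yours invokes heavier but entirely standard foundational facts (essentially the Dubson--Kashiwara index picture: $\chi(X,P)$ depends only on the constructible function of local Euler characteristics $e_\alpha$, which pullback along the \'etale map preserves) and is arguably more robust, since it never needs simply connected intersections; your projection-formula variant, $\chi(X_n,\pi_n^*P)=\chi\bigl(X, P\otimes\pi_{n*}\Q_{X_n}\bigr)$ with $\pi_{n*}\Q_{X_n}$ a rank-$d$ local system, is the slickest reduction of all. Two small remarks: your closing claim that the rank-times-$\chi$ fact ``is again ingredient (i)'' is a mislabel --- it is really the stratification identity (tensoring multiplies each $e_\alpha$ by $d$), not the $\chi=\chi_c$ statement --- and in the paper's context $X$ is projective, so $X_n$ is compact and the global identity $\chi=\chi_c$ is trivial; the general theorem is genuinely needed only on the noncompact strata, where it is standard. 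Neither point is a gap.
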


\begin{proof}
Choose a finite open cover  $\{U_i\}$ of $X$, such that each intersection  $U_{i_0\ldots i_m} = U_{i_0}\cap \ldots  \cap U_{i_m}$ is simply connected.
Then the pair $(\pi_n^{-1}U_{i_0\ldots i_m},  \pi_n^* P)$ will decompose into disjoint  union of $\deg \pi_n$  copies of $(U_{i_0\ldots i_m},   P)$. Therefore
\begin{equation*}
\begin{split}
 \chi(X_n, \pi_n^* P) &= \sum_m (-1)^m\chi(\pi_n^{-1}U_{i_0\ldots i_m}, \pi_n^* P)\\
&=\sum_m (-1)^m (\deg \pi_n) \chi(U_{i_0\ldots i_m}, P)\\
&= (\deg \pi_n) \chi(X, P)
\end{split}
\end{equation*}
\end{proof}

\section{Vanishing for curves }

%The proof of  theorem \ref{thm:main} will use induction on dimension.
In this section, we analyze the one dimensional case.
Let $C$ be a smooth projective curve of genus $g= g(C)$.  Any $P\in Perv_{HM}(C)$, or more generally a semisimple perverse sheaf, is a direct sum of a skyscraper sheaf and a sheaf of the form $P= IC(L):= j_*L[1]$, where $L$ is a local system defined on $j:U\to C$. In the second case, we will say that $P$ has full support, and we define $r(P)$ to be the rank of $L$ and
  $s(P) = \#(C-U)$, where $U$ is chosen to be maximal. If $P$ is a skyscraper sheaf, we set $r(P)=0$.
%If $P$ skyscraper sheaf and $y\in C$, define $\ell_y(P)$ to be the dimension of the stalk at $y$. Set $\ell(P) = \sum_y \ell_y(P) = h^0(P)$. [CHANGE d to $\ell$ below]

\begin{lemma}\label{lemma:bnd}
 Let $P\in Perv(C)$ be simple. If $r(P)=0$, then
 $$h^{-1}(C,P)= h^1(C,P)=0$$
 $$ h^0(C,P)=1$$
 If $r(P)>0$, then
 $$h^{-1}(C,P)\le r(P), h^1(C,P)\le r(P)$$
 $$h^0(C, P) \le (2g+s(P)) r(P)$$

\end{lemma}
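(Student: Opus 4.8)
The plan is to prove the stated bounds on the cohomology of a simple perverse sheaf $P\in Perv(C)$ by separating into the two cases according to whether $r(P)=0$ or $r(P)>0$, and in each case computing the cohomology of the shifted complex directly from the structure of $P$.

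\medskip

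First I would dispose of the case $r(P)=0$. Here $P$ is a simple skyscraper sheaf, i.e. $P=\Q_x$ supported at a single point $x\in C$ (placed in degree $0$ by our perversity conventions). Then $\mathcal{H}^{-1}(P)=\mathcal{H}^{1}(P)=0$ and $H^*(C,P)=H^*(\{x\},\Q)$, so $h^0(C,P)=1$ while $h^{-1}(C,P)=h^1(C,P)=0$. This is immediate.

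\medskip

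Next I would treat the main case $r(P)>0$, where $P=IC(L)=j_*L[1]$ for $L$ an irreducible local system of rank $r=r(P)$ on $j:U\hookrightarrow C$, with $s=s(P)=\#(C-U)$. The key observation is that since $C$ is a curve, $IC(L)=Rj_*L[1]$ when $L$ is irreducible nontrivial, so the hypercohomology $H^k(C,P)=H^{k+1}(C,Rj_*L)=H^{k+1}(U,L)$, reducing everything to the ordinary cohomology of the local system $L$ on the open curve $U$. Thus $h^{-1}(C,P)=h^0(U,L)$, $h^0(C,P)=h^1(U,L)$, and $h^1(C,P)=h^2(U,L)$. Now I bound each term. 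For $h^0(U,L)=\dim H^0(U,L)$: global sections are monodromy-invariants, so $h^0(U,L)\le r$. Dually (using that $P$ is self-dual up to the involution $D$, or directly by Poincar\'e duality on $U$ compactified to $C$) one gets $h^1(C,P)=h^2(U,L)\le r$; since $U$ is an open curve $H^2(U,L)$ is computed by the compactly-supported or dual invariants, again of dimension at most $r$. For the middle term I would use the Euler characteristic formula for a local system on an open curve: $\chi(U,L)=r\cdot\chi(U)=r(2-2g-s)$, so $h^1(U,L)=h^0(U,L)+h^2(U,L)-\chi(U,L)\le r + r - r(2-2g-s)=r(2g+s)$, giving $h^0(C,P)\le(2g+s)r$ as claimed.

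\medskip

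The main obstacle I anticipate is pinning down the exact relationship between $IC(L)$ and $Rj_*L$ and the correct bounds on the boundary terms $h^0(U,L)$ and $h^2(U,L)$ when $L$ may have nontrivial local monodromy at the punctures. Care is needed because $j_!L$, $j_{!*}L[1]$, and $Rj_*L$ differ at the punctures, and the bound $h^{-1}(C,P)\le r$ requires knowing that the stalk cohomology of $IC(L)$ at a puncture contributes only the invariants of the local monodromy, of dimension at most $r$. The cleanest route is to argue that for the intermediate extension $j_{!*}$ the costalks and stalks at punctures are given by monodromy invariants and coinvariants, each of dimension $\le r$, and then to combine this with the Euler characteristic computation, which itself is insensitive to the choice of extension. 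Once these punctual bounds are in hand, the stated inequalities follow directly.
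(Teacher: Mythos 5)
Your overall architecture (split into skyscraper versus $IC(L)$, bound the outer cohomologies by monodromy invariants and duality, and get the middle bound from $\chi(U,L)=r(2-2g-s)$) matches the paper's, but your main route rests on a false key claim: it is \emph{not} true that $IC(L)=Rj_*L[1]$ when $L$ is irreducible and nontrivial. On a curve $IC(L)=j_*L[1]$ with the \emph{underived} pushforward, and $Rj_*L[1]$ differs from it by the skyscraper $R^1j_*L$, whose stalk at a puncture is the coinvariants of the local monodromy $T_x$. Global irreducibility of $L$ says nothing about these local invariants/coinvariants: for instance, rank-two irreducible (hypergeometric) local systems on $\PP^1\setminus\{0,1,\infty\}$ with unipotent nontrivial local monodromy have $R^1j_*L\neq 0$ --- and indeed unipotent local monodromy is exactly the situation of the paper's example \ref{ex:Vd}, so this case is typical, not exceptional. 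Consequently your claimed equality $h^0(C,P)=h^1(U,L)$ fails whenever some $T_x$ has invariants: the long exact sequence of the triangle $j_*L[1]\to Rj_*L[1]\to R^1j_*L\xrightarrow{[1]}$ gives only an injection $H^0(C,P)\hookrightarrow H^1(U,L)$, with cokernel seeing $\bigoplus_x (L_x)_{T_x}$.

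The argument is nonetheless repairable, and in fact your closing paragraph essentially contains the repair, which is also the paper's proof: use that $H^{-1}(C,P)=H^0(C,j_*L)=L_x^{\pi_1(U,x)}$ has dimension $\le r$ (this needs no comparison with $Rj_*$), get $h^1(C,P)\le r$ from Verdier self-duality of the intermediate extension, and for the middle term use only the \emph{injection} $H^0(C,P)\hookrightarrow H^1(U,L)$ from the distinguished triangle above, followed by $h^1(U,L)\le -\chi(U,L)+2r=(2g+s)r$. Since all the stated conclusions are upper bounds, replacing your equalities by these inequalities closes the gap; but as written, the ``key observation'' in your second paragraph is wrong, and a proof that leans on it cannot stand without that substitution.
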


\begin{proof}
Since $P$ is simple, it is either a skyscraper sheaf of the form $\Q_x$, or $P= j_*L[1]$, where $L$ is a local system defined on $j:U\to C$.
In the first case, we see immediately that 
$$h^{-1}(C,P)= h^1(C,P)=0, h^0(C,P)=1$$
In the second case,
$$H^{-1}(C,P) = L_x^{\pi_1(U,x)}$$
so that 
$$h^{-1}(C,P)\le r(P)$$
We have
$$ h^1(C,P)\le r(P)$$
by duality.
The distinguished triangle
$$ j_*L[1] \to Rj_* L[1]\to R^1j_*L\xrightarrow{[1]}$$
gives an injection
$$H^0(C,P)\hookrightarrow H^1(U, L)$$
Therefore
$$h^{0}(C,P)\le h^1(U, L) \le -\chi(U, L)+2r(P)=(2g+s(P)) r(P)$$
\end{proof}

The one dimensional case of  conjecture \ref{conj:main}  is easy, and it follows from next proposition.
It will be important to allow singular curves.

\begin{prop}\label{prop:curve}
 Let $C$ be a possibly singular and possibly reducible connected projective curve. 
 Let $H\subset \hat \pi_1(C)$ be a closed normal subgroup such that for any connected component $\tilde C^k$ of the normalization of $C$,
 the preimage of $H$ in $\hat \pi_1(\tilde C^k)$ has infinite index.
 %  the tower of covers associated to a sequence in
% $$ \varprojlim_n \pi_0(C_n\times_{\tilde C^j} C)$$
% is infinite.
  If $\pi_n: C_n\to C$ is an  $H$-tower of \'etale covers, and $P\in Perv(C)$, then
 $$\lim_{n\to \infty} \frac{h^i(C_n, P)}{\deg \pi_n} = 0$$
 for $i \not= 0$. In particular, $(C,H)$ has the vanishing property.
\end{prop}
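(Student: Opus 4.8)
The plan is to reduce the statement for an arbitrary $P\in Perv(C)$ to the case of simple objects, and then to bound the cohomology of the pulled-back simple pieces along the tower using the explicit estimates of Lemma \ref{lemma:bnd}. First I would observe that $\pi_n^*P$ need not equal $P$ pulled back in any naive sense unless we are careful, so the cleanest formulation is to compute $h^i(C_n,\pi_n^*P)$. By the long exact sequence in cohomology, the quantity $\sum_i |h^i(C_n,\pi_n^*P)|$ is subadditive in short exact sequences of perverse sheaves; since $Perv(C)$ is artinian, it suffices to prove the limit vanishes for $P$ simple. Thus I reduce to the two cases catalogued before Lemma \ref{lemma:bnd}: $P$ a skyscraper sheaf ($r(P)=0$), or $P=IC(L)=j_*L[1]$ of full support.

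Next I would handle the skyscraper case, which is trivial: if $P=\Q_x$ then $\pi_n^*P$ is a sum of $\deg\pi_n$ skyscrapers (one at each point of the fibre), so $h^{-1}=h^1=0$ and $h^0=\deg\pi_n$, giving limit $0$ in every degree $i\neq 0$. The substantive case is $P=j_*L[1]$ with $r=r(P)>0$. Here $\pi_n^*P$ is again of the form $IC$ of a local system on an open subset of $C_n$, namely the pullback local system $L_n$ on $U_n=\pi_n^{-1}(U)$. Writing $s_n=s(\pi_n^*P)=\#(C_n-U_n)$ and noting $r(\pi_n^*P)=r$ (the rank is preserved under pullback), I apply Lemma \ref{lemma:bnd} on each connected component of $C_n$ and sum. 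Since $\deg\pi_n\to\infty$ under the infinite-index hypothesis, the bounds $h^{-1},h^1\le r\cdot(\text{number of components})$ will need controlling: the number of connected components of $C_n$ is itself $O(\deg\pi_n)$ in the worst case, so these crude bounds are not automatically enough, which is the source of the main difficulty.

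The key step, and the main obstacle, is to show that the $h^{\pm 1}$ contributions grow strictly slower than $\deg\pi_n$, i.e. that the invariant subspaces $L_x^{\pi_1(U_n)}$ contribute negligibly in the normalized limit. The hypothesis that the preimage of $H$ in each $\hat\pi_1(\tilde C^k)$ has infinite index is exactly what is needed: passing to the normalization $\tilde C=\bigsqcup \tilde C^k$ and using that cohomology of $IC$-sheaves is computed from the smooth curve pieces (up to skyscraper corrections at the singular and boundary points, whose total count is $O(\deg\pi_n)$ but which contribute only to $h^0$), the infinite-index condition forces $\deg\pi_n/(\#\text{components of }C_n)\to\infty$ on each normalized component, so the per-component constant bound $r$ from Lemma \ref{lemma:bnd} gets washed out in the normalized limit. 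For $h^0$, I would use the bound $h^0(C_n,\pi_n^*P)\le (2g(C_n)+s_n)r$ together with the multiplicativity of Euler characteristics from Lemma \ref{lemma:eulercov}: since $\chi(C_n,\pi_n^*P)=(\deg\pi_n)\chi(C,P)$ grows linearly and $h^{\pm1}=o(\deg\pi_n)$, the normalized $h^0$ converges to $-\chi(C,P)$, a finite number, but crucially the degrees $i\neq 0$ are precisely $i=\pm 1$, so it is only the vanishing of the normalized $h^{\pm1}$ that must be established. I therefore expect the entire weight of the argument to rest on quantifying, via the infinite-index condition and a Gauss--Bonnet/Euler-characteristic count on the normalization, that the boundary point count $s_n$ and the monodromy-invariant dimensions are $o(\deg\pi_n)$.
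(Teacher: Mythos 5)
Your proposal is correct and follows essentially the same route as the paper: reduction to simple $P$ by artinian-ness, the trivial skyscraper case, and for $P=IC(L)$ the rank bound on monodromy invariants from lemma \ref{lemma:bnd} applied componentwise after pulling back to the normalization, where (as in lemma \ref{lemma:restHtower}) $C_n\times_C\tilde C^k$ splits into $r_n$ copies of a connected cover $\tilde C^k_n$ of degree $d_n$, the infinite-index hypothesis gives $d_n\to\infty$, and $\deg\pi_n=d_nr_n$ washes out the per-component bound $r(P)$, with $h^1$ handled by duality exactly as you suggest. Two harmless slips that do not affect the argument: $C_n$ is connected by the paper's definition of an $H$-tower, so the count that matters is $r_n$ (the copies over the normalized component), which is what you in fact use; and your $h^0$ discussion is superfluous since only $i=\pm1$ occur (also the normalized $h^0$ tends to $\chi(C,P)$, not $-\chi(C,P)$).
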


\begin{proof}
Since $Perv(C)$ is artinian,
 we can assume that $P$ is simple. Then $P$ is either a skyscraper sheaf, or $P= p_*j_*L[1]$, where $L$ is a local system defined on some
  $U\xrightarrow{j} \tilde C^k\xrightarrow{p} C$.
 In the first case, $H^{-1}(P) = H^1(P)=0$. Let us consider  the second case.
 %$h^{-1}(P) \le r(P)= \operatorname{rk} L$. 
By lemma \ref{lemma:restHtower} the fibre product $C_n\times_C {\tilde C^k} \to \tilde C^k$ decomposes into a disjoint union of say $r_n$ copies  of a connected \'etale cover $\tilde C_n^k\to \tilde C^k$ of degree $d_n$. Then $\deg \pi_n = d_n r_n$. Since the preimage of $H$ in $\hat \pi_1(\tilde C^k)$ has infinite index, $\tilde C_n^k\to \tilde C^k$ is unbounded i.e.  $d_n\to \infty$.
 Therefore
 $$ \frac{h^{-1}(C_n, \pi_n^*P)}{\deg \pi_n} \le \frac{h^{-1}(\tilde C_n^k, \pi_n^*P)}{d_n}  \le\frac{r(\pi_n^*P)}{d_n}=  \frac{r(P)}{d_n} \to 0$$
 By duality, we get a similar vanishing statement for $h^1$ (or we could just repeat the same argument).
 
 %%Dump
% Let  $\pi_n: C_n\to C$ be an infinite tower of \'etale covers, such that for any
% connected component $\tilde C^k$ of the normalization of $C$, there exists a connected unbounded tower of \'etale covers $\tilde C_n^k\to \tilde C^k$
% with a compatible family of  embeddings $\tilde C_n^k\hookrightarrow C_n\times_C {\tilde C^k}$.

\end{proof}

\begin{cor}
 A curve is V-hyberbolic if and only if it has positive genus.
\end{cor}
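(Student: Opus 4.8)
The plan is to prove the two implications separately: the forward direction (positive genus implies V-hyperbolic) is a direct application of Proposition \ref{prop:curve}, while the reverse direction (V-hyperbolic implies positive genus) follows from the already-established fact that V-hyperbolic varieties are Koll\'ar-hyperbolic.

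For the forward direction, suppose $C$ is a smooth projective curve of genus $g\ge 1$. Since a curve in the sense of this paper is smooth, projective, and connected, it is its own normalization, so the normalization has the single component $\tilde C^1 = C$ and the normalization map is the identity. I would take $H=\{1\}$, the trivial subgroup of $\hat\pi_1(C)$; it is closed (profinite groups are Hausdorff, so singletons are closed) and normal, and because $\hat\pi_1(C)$ is infinite for $g\ge 1$, it has infinite index. The only hypothesis of Proposition \ref{prop:curve} to check is that the preimage of $H$ in $\hat\pi_1(\tilde C^k)$ has infinite index for each component $\tilde C^k$; here there is one component, and the preimage of $\{1\}$ under the identity map is again $\{1\}$, of infinite index. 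Proposition \ref{prop:curve} then gives the vanishing property for every $P\in Perv(C)$, in particular for $P\in Perv_{HM}(C)$, so $(C,\{1\})$ has the vanishing property and, $C$ being normal, is V-hyperbolic.

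For the reverse direction, I would argue contrapositively. If $C$ has genus $0$, then $C\cong \PP^1$ and $\hat\pi_1(C)$ is trivial. The identity map $C\to C$ is nonconstant yet induces the trivial homomorphism $\hat\pi_1(C)\to \hat\pi_1(C)$, so $C$ is not Koll\'ar-hyperbolic; by the lemma that V-hyperbolic implies Koll\'ar-hyperbolic, $C$ is not V-hyperbolic. (One can see this even more directly: the only closed normal subgroup of the trivial group $\hat\pi_1(\PP^1)$ has finite index, so there is no admissible $H$ of infinite index at all, and the vanishing property is vacuously unattainable.)

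The substantive content of this corollary resides entirely in Proposition \ref{prop:curve}; what remains is a bookkeeping check that the definitions align. The only point requiring care is the selection of $H$ and the verification that it has infinite index, and this is precisely where the positive-genus hypothesis is used, through the infinitude of $\hat\pi_1(C)$. I therefore expect no genuine obstacle beyond confirming that the smooth curve is its own normalization and that $\{1\}$ satisfies the infinite-index condition of the proposition.
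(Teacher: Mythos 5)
Your proposal is correct and matches the argument the paper intends: the forward direction is exactly the specialization of Proposition \ref{prop:curve} to a smooth curve with $H=\{1\}$ (closed, normal, of infinite index since $\hat\pi_1(C)$ is infinite for $g\ge 1$), and the reverse direction via the lemma that V-hyperbolic implies Koll\'ar-hyperbolic, applied to the identity map of $\PP^1$, is the natural route the paper's framework supplies. Your bookkeeping checks (the smooth curve being its own normalization, $Perv_{HM}(C)\subset Perv(C)$, normality of $C$) are precisely the points that need verifying, and they are all handled correctly.
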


\section{Finite covers}

\begin{lemma}\label{lemma:genfin}
 If $f:X\to Y$  is a proper surjective map of normal quasiprojective varieties, then $f( \pi_1(X))\subseteq  \pi_1(Y)$ has finite index, and  $f( \hat \pi_1(X))\subseteq \hat  \pi_1(Y)$
is open. 
 
 \end{lemma}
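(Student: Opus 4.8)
The plan is to reduce everything to the discrete statement that $\im[\pi_1(X)\to\pi_1(Y)]$ has finite index, and then to split $f$ into a connected-fibre part and a finite part via Stein factorization. First I would dispose of the profinite assertion as a formal consequence of the discrete one. Since $\pi_1(X)$ is dense in the compact group $\hat\pi_1(X)$ and $f:\hat\pi_1(X)\to\hat\pi_1(Y)$ is continuous, the image $f(\hat\pi_1(X))$ is compact, hence closed, and contains $f(\pi_1(X))$; therefore it equals the closure of $f(\pi_1(X))$ in $\hat\pi_1(Y)$. A finite-index subgroup of $\pi_1(Y)$ contains a finite-index normal subgroup whose closure is open, so once $f(\pi_1(X))$ has finite index, $f(\hat\pi_1(X))$ is open. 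Thus it suffices to prove the finite-index claim for discrete fundamental groups.

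Next I would take the Stein factorization $X\xrightarrow{g}Y'\xrightarrow{h}Y$, where $g$ is proper with connected fibres and $h$ is finite, with $Y'=\Spec_Y f_*\OO_X$; here $Y'$ is normal because $X$ is. For $g$, since $Y'$ is normal and $g$ is proper surjective with connected fibres, the map $\pi_1(X)\to\pi_1(Y')$ is surjective. This is exactly the fact already used in the proof of Proposition \ref{prop:defkhyp}, so I would simply invoke it. It then remains to treat the finite case, namely to show that $\im[\pi_1(Y')\to\pi_1(Y)]$ has finite index.

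For the finite surjective map $h:Y'\to Y$ of normal varieties, I would pass to the étale locus. In characteristic $0$ the dominant finite map $h$ is generically étale, so there is a dense Zariski-open $V\subseteq Y$ with $V'=h^{-1}(V)$ such that $h:V'\to V$ is finite étale of some degree $d$; as $Y'$ is irreducible, $V'$ is connected, whence $\im[\pi_1(V')\to\pi_1(V)]$ has index $d$. I would combine this with the standard fact that the inclusion of a dense Zariski-open subset of a normal (hence locally irreducible) variety induces a surjection on $\pi_1$, giving surjections $\pi_1(V)\to\pi_1(Y)$ and $\pi_1(V')\to\pi_1(Y')$. Chasing the commutative square attached to $h$ on the pairs $(V',V)$ and $(Y',Y)$, the image of $\pi_1(V')$ in $\pi_1(Y)$ is finite index, being the image of a finite-index subgroup under the surjection $\pi_1(V)\to\pi_1(Y)$; since this image factors through $\pi_1(Y')$, the image of $\pi_1(Y')\to\pi_1(Y)$ is finite index. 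Composing with the surjectivity from $g$ gives the discrete statement.

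The main obstacle is this finite-map step over a possibly singular normal base, where two points need justification: that removing the branch and singular loci does not affect $\pi_1$-surjectivity, and that the Stein factorization $Y'$ is genuinely normal. For the first I would reduce to the smooth case by first passing to the smooth locus $Y-Y_{sing}$, whose complement has complex codimension $\ge 2$ by normality and so leaves $\pi_1$ unchanged, and similarly for $Y'$; the surjectivity for a dense Zariski-open then follows since the removed set has real codimension $\ge 2$. For the second, if one prefers to avoid asserting normality of $Y'$ directly, one can normalize $Y'$ and lift $g$ across the normalization using that $X$ is normal, and argue that the lifted map still has connected fibres over the normal target.
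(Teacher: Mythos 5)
Your argument is correct in substance, but it follows a genuinely different route from the paper's. The paper first reduces to the case where $f$ is generically finite by slicing $X$ with a complete intersection of hyperplane sections, and then passes to resolutions of singularities $\tilde X\to X$, $\tilde Y\to Y$ together with Zariski open sets $U\subset \tilde X$, $V\subset \tilde Y$ whose complements are simple normal crossing divisors, so that the induced map $\pi:U\to V$ is a finite \'etale cover; the finite-index conclusion then follows from the surjections $\pi_1(U)\twoheadrightarrow \pi_1(X)$ and $\pi_1(V)\twoheadrightarrow \pi_1(Y)$. You instead split $f$ by Stein factorization $X\xrightarrow{g} Y'\xrightarrow{h} Y$, handle $g$ by the connected-fibre surjectivity fact already used in the proof of proposition \ref{prop:defkhyp}, and handle the finite part $h$ via generic \'etaleness in characteristic zero together with surjectivity of $\pi_1$ of a dense Zariski open subset of a normal variety; your deduction of the profinite statement from the discrete one (closed image of a compact group, closure of a finite-index subgroup is open) matches the dictionary set up at the start of the paper's section 3. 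The trade-off: your route avoids both resolution of singularities and the slicing step, at the cost of needing the open-dense $\pi_1$-surjectivity statement on possibly singular normal varieties, whereas the paper reduces everything to the smooth SNC setting where the required surjectivity statements are classical.

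One justification you offer is wrong, although the statement it is meant to support is true and standard, so this is repairable rather than fatal. Removing the singular locus of a normal variety does \emph{not} leave $\pi_1$ unchanged: for the affine cone $Y$ over a smooth projective curve of positive genus in a projectively normal embedding, $Y$ is normal and contractible, yet its smooth locus $Y-\{\text{vertex}\}$ has infinite fundamental group, even though the removed set has complex codimension $2$ --- the codimension-two invariance of $\pi_1$ is valid only inside a smooth ambient variety. Fortunately your argument uses only the surjections $\pi_1(V)\to\pi_1(Y)$ and $\pi_1(V')\to\pi_1(Y')$, and these follow from the local irreducibility of normal complex varieties, which you yourself invoke: any connected covering space of $Y$ is again a normal complex space, hence its restriction over the dense Zariski open $V$ stays connected, and surjectivity follows from the covering-space criterion. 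With that substitution in place of the ``leaves $\pi_1$ unchanged'' step, your proof is complete.
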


\begin{proof}
By taking a complete intersection of hyperplane sections, we can find a closed normal subvariety $X'\subset X$ such that $f' = f|_{X'}$ is generically finite.
Since $f(\pi_1(X))$ (or  $f(\hat \pi_1(X))$) contains $f'(\pi_1(X))$ (or  $f'(\hat \pi_1(X))$), there is no loss of generality in assuming that $f$ is generically
finite.
We can construct a commutative  diagram
 $$
 \xymatrix{
 U\ar[r]^{\iota}\ar[d]^{\pi} & \tilde X\ar[r]^{p}\ar[d]^{\tilde f} & X\ar[d]^{f} \\ 
 V\ar[r]^{\iota'} & \tilde Y\ar[r]^{p'} & Y
}
 $$
 where the arrows labeled $p,p'$ are resolutions of singularities, $\iota,\iota'$ are open immersions whose complements are divisors with simple normal crossings, and $\pi$ is an \'etale cover.  Since the maps $p,p'$ are surjective with connected fibres, they induce surjections on $\pi_1$. If $U = \tilde X- D$, then $\pi_1(\tilde X)$ is the quotient of $\pi_1(U)$ by the normal subgroup generated by loops about components of $D$. In particular, $\pi_1(U)\to \pi_1(\tilde X)$ is surjective. Similarly for $\iota'$. Therefore we have  a commutative diagram
 $$
 \xymatrix{
 \pi_1(U)\ar[r]\ar[d]^{\pi} & \pi_1(X)\ar[d]^{f} \\ 
 \pi_1(V)\ar[r] & \pi_1(Y)
}
$$
where the horizontal maps are surjective and $\im \pi$ has finite index because $\pi$ is an \'etale cover. 
This implies that $f( \pi_1(X))\subseteq  \pi_1(Y)$ has finite index, and  therefore that $f( \hat \pi_1(X))\subseteq \hat  \pi_1(Y)$ is open.
\end{proof}

\begin{thm}\label{thm:finVhyp}
  Let  $X$ be  V-hyperbolic (with respect to $H$).  Let $Y$ be a normal projective variety and let $f:Y\to X$ be a map.
\begin{enumerate}
 \item[(a)] Suppose $f:Y\to X$ is finite, or more generally suppose
   that $f$ is finite over $f(Y)$ and $f(\pi_1(Y))\subseteq \pi_1(X)$ has finite index. Then $Y$ is   
  V-hyperbolic (with respect to the preimage of $H$). 
\item[(b)] Let $V$ be a VHS on a Zariski open subset of $Y$. If $f$ is surjective, then for any $f^{-1}(H)$-tower $\pi_n:Y_n\to Y$,
$$\lim_{n\to \infty} \frac{h^i(Y_n, \pi_n^* S_Y(V))}{\deg \pi_n} = 0$$
for $i> \dim Y - \dim X$.
\item[(c)] Let $f$ be generically finite. If  $V$ is as in (b), then  $\chi(S_Y(V))\ge 0$. In particular, $\chi(Y, \omega_{Y}) \ge 0$ when $Y$ has rational singularities.
\end{enumerate}
  \end{thm}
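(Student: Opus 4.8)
The plan is to treat (a) and (b) by a common device that transfers cohomology on a tower over $Y$ to cohomology on an associated tower over $X$, where the V-hyperbolicity hypothesis is available. Write $G = f^{-1}(H) \subseteq \hat\pi_1(Y)$ and let $\pi_n : Y_n \to Y$ be an arbitrary $G$-tower, corresponding to a descending chain of open subgroups $\Gamma^n \subseteq \hat\pi_1(Y)$ with $\bigcap_n \Gamma^n = G$. In both (a) and (b) the image $f(\hat\pi_1(Y))$ has finite index in $\hat\pi_1(X)$ (by hypothesis in (a), by Lemma \ref{lemma:genfin} in (b)), so each $f(\Gamma^n)$ is open, and I would set $\Gamma^n_X := f(\Gamma^n)\cdot H$. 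Since $\ker(f)\subseteq G\subseteq \Gamma^n$, a short compactness argument in these profinite groups gives $f^{-1}(\Gamma^n_X) = \Gamma^n$ and $\bigcap_n \Gamma^n_X = H$, so the $\Gamma^n_X$ define an $H$-tower $\pi^X_n : X_n \to X$. Lemma \ref{lemma:restHtower} then identifies $X_n\times_X Y$, as an étale cover of $Y$, with a disjoint union of $r_n$ copies of $Y_n$, where $r_n\deg\pi_n = \deg\pi^X_n$. Base change along the Cartesian square formed by $f$, $\pi^X_n$, $\pi_n$ and the projection $p_n:X_n\times_X Y\to Y$ (proper base change for constructible $F$, flat base change for coherent $F$, using that $\pi^X_n$ is finite étale and $f$ is proper) then yields the normalized identity
$$\frac{h^i(Y_n,\pi_n^*F)}{\deg\pi_n} = \frac{h^i(X_n,\pi_n^{X*}Rf_*F)}{\deg\pi^X_n}.$$

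For (a), $f$ is finite, so $Rf_* = f_*$, and $G$ has infinite index because $f(\hat\pi_1(Y))H/H$ is a finite-index subgroup of the infinite group $\hat\pi_1(X)/H$. Taking $F = Q\in Perv_{HM}(Y)$ and using that a finite pushforward of a Hodge module gives $f_*Q\in Perv_{HM}(X)$, the displayed identity together with V-hyperbolicity of $(X,H)$ applied to the $H$-tower $X_n$ and the perverse sheaf $f_*Q$ forces $\lim_n h^i(Y_n,\pi_n^*Q)/\deg\pi_n = 0$ for $i\neq 0$. As $Q$ and the $G$-tower were arbitrary and $Y$ is normal, $(Y,G)$ is V-hyperbolic, proving (a).

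Part (b) is the crux. Here $f$ is only surjective, and one cannot replace $Q$ by its perverse pushforward: the perverse cohomologies ${}^p\mathcal{H}^j(Rf_*IC^H(V))$ do \emph{not} vanish for $j>\dim Y-\dim X$ in general (for a blow-up they persist up to $j=\dim X-2$), so the naive bound fails. Instead I would apply the normalized identity to the coherent sheaf $F = S_Y(V) = Gr^{p_{\max}}_F DR(IC^H(V))$ of Example \ref{ex:kollar}. The decisive input is Saito's extension of Kollár's package \cite{saito3}: $Rf_*S_Y(V)$ splits as $\bigoplus_j R^jf_*S_Y(V)[-j]$, each $R^jf_*S_Y(V)$ is again the canonical sheaf $S_X(H_j)$ of a pure Hodge module $H_j$ on $X$, and $R^jf_*S_Y(V)=0$ for $j>\dim Y-\dim X$. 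Thus
$$\frac{h^i(Y_n,\pi_n^*S_Y(V))}{\deg\pi_n} = \sum_{0\le j\le \dim Y-\dim X}\frac{h^{i-j}(X_n,\pi_n^{X*}S_X(H_j))}{\deg\pi^X_n}.$$
By Theorem \ref{thm:strictness}, $h^{i-j}(X_n,\pi_n^{X*}S_X(H_j))$ is bounded by $\dim H^{i-j}(X_n,\pi_n^{X*}P_j)$ for the perverse sheaf $P_j$ underlying $H_j$, so V-hyperbolicity of $(X,H)$ makes every term with $i-j\neq 0$ contribute $0$ in the limit. When $i>\dim Y-\dim X$ each index $j$ in the sum satisfies $i-j>0$, and the whole limit vanishes, proving (b). This Kollár–Saito decomposition-and-vanishing is the main obstacle: it is exactly what replaces the false perverse-sheaf statement, while the profinite bookkeeping of the first paragraph is routine.

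Finally, (c) is the case $\dim Y=\dim X$ of (b), where the vanishing range becomes $i>0$. Mirroring Proposition \ref{prop:mhm}, I would use the coherent analogue of Lemma \ref{lemma:eulercov} (Hirzebruch–Riemann–Roch gives $\chi(Y_n,\pi_n^*S_Y(V))=\deg\pi_n\cdot\chi(Y,S_Y(V))$), so that $\chi(Y,S_Y(V))=\sum_i(-1)^i h^i(Y_n,\pi_n^*S_Y(V))/\deg\pi_n$; passing to a subsequence, the $i>0$ terms die by (b), negative degrees are absent since $S_Y(V)$ is a sheaf, and the $i=0$ term tends to a nonnegative liminf, whence $\chi(S_Y(V))\ge 0$. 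Taking $V$ to be the constant VHS and recalling that $S_Y(\Q)=\omega_Y$ when $Y$ has rational singularities (via $\pi_*S_{\tilde Y}(\Q)=\pi_*\omega_{\tilde Y}=\omega_Y$ for a resolution) gives $\chi(Y,\omega_Y)\ge 0$.
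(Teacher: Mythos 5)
Your proposal is correct and follows essentially the same route as the paper's own proof: the same transfer of a $G$-tower on $Y$ to an induced $H$-tower on $X$ with a base-change identity for normalized cohomology, finite pushforward of Hodge modules for (a), Saito's Koll\'ar-type decomposition $\R f_*S_Y(V)=\bigoplus_j S_X(V^j)[-j]$ with vanishing beyond the generic fibre dimension combined with Theorem \ref{thm:strictness} for (b), and the Riemann--Roch/Euler-characteristic argument of Proposition \ref{prop:mhm} for (c). The only deviations are cosmetic refinements: you set $\Gamma^n_X=f(\Gamma^n)\cdot H$ where the paper works with $\Xi^n=f(\Gamma^n)$, and you quote the vanishing $R^jf_*S_Y(V)=0$ for $j>\dim Y-\dim X$ directly from \cite{saito3}, where the paper deduces it from torsion-freeness of the summands and the generic fibre dimension.
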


\begin{proof}
Let $G\subset \hat \pi_1(Y)$ denote the preimage of $H$. Then $(Y, G)$ is Koll\'ar-hyperbolic by proposition \ref{prop:khyp}.
 Therefore $G$ has infinite index. Let $Y_n \to Y$ be a $G$-tower, and let
 $\Gamma^n \subset \hat \pi_1(Y)$ be the corresponding chain of open subgroups. Suppose
 that $f(\pi_1(Y))\subseteq \pi_1(X)$ has finite index either  by lemma \ref{lemma:genfin} or because
 we assumed it in  (a). Then $\Xi^n= f(\Gamma^n)\subset \hat \pi_1(X)$ is a chain of open subgroups. This corresponds to an $H$-tower $X_n\to X$.
 We have that $\Gamma^n\subseteq f^{-1} \Xi^n$, and we claim that these are equal. Since $\hat \pi_1(Y)/G\to \hat \pi_1(X)/H$ is injective, we can identify
 $$\Gamma^n/G =  (\Xi^n/H) \cap (\hat \pi_1(Y)/G)= f^{-1} \Xi^n/G$$
 Therefore $\Gamma^n= f^{-1} \Xi^n$ as claimed. Therefore we have a cartesian diagram
 $$
\xymatrix{
 Y_n\ar[r]^{\pi_n}\ar[d]^{f_n} & Y\ar[d]^{f} \\ 
 X_n\ar[r]^{p_n} & X
}
 $$
 By proper base change,  $\R f_{n*}\pi_n^*\F = p_n^* \R f_* \F$, for any bounded complex of sheaves $\F$ on $Y$. Therefore
\begin{equation}\label{eq:hiYneqXn}
  \frac{h^i(Y_n, \pi_n^* \F)}{\deg \pi_n} =  \frac{h^i(X_n, p_n^* \R f_*\F)}{\deg p_n}
\end{equation}

Suppose that the assumptions of (a) hold. In particular $f$ is finte over its image.
This implies that if  $P\in Perv_{HM}(Y)$, then $f_*P\in Perv_{HM}(X)$.  Finiteness, \eqref{eq:hiYneqXn} and the $V$-hyperbolicity of $X$  implies
 $$  \frac{h^i(Y_n, \pi_n^* P)}{\deg \pi_n} =  \frac{h^i(X_n, p_n^*  f_* P)}{\deg p_n}\to 0$$
  when $i\not= 0$. This proves (a).
 
 Let us turn to (b). Let $r = \dim Y -\dim X$ be   the dimension of the generic fibre.
  By \cite{saito3},
\begin{equation}\label{eq:decompS}
\R f_* S_Y(V) = \bigoplus_{i=0}^m S_X(V^i)[-i]  
\end{equation}
where $V^i$ are variations of Hodge structure defined on a smooth  open $U\subseteq X$, and $m\ge 0$ is some integer.
It follows that $S_X(V^i) = R^i f_* S_Y(V)$.
By shrinking $U$ if necessary, we can assume the dimension of all fibres of $f$ over $U$ are $r$. Therefore $S_Y(V^i)|_U=0$ for
$i> r$. Since $S_Y(V^i)$ is torsion free  (see example \ref{ex:Vd}), these sheaves vanish everywhere for $i>r$. Therefore, we can assume
that $m=r$. Consequently
$$\frac{h^i(Y_n, \pi_n^*S_Y(V) )}{\deg \pi_n} = \sum_{j=0}^r \frac{h^{i-j}(X_n, p_n^*S_X(V^i))}{\deg  p_n} = 0 $$
for $i>r$ by  \eqref{eq:hiYneqXn}, \eqref{eq:decompS}  and the fact that $X$ is $V$-hyperbolic.

When $f$  is generically finite, then  $\chi(S_Y(V))\ge 0$ follows from (b) (see the proof of proposition  \ref{prop:mhm}). Since  $S_X(\Q_Y^H[\dim Y]) = \omega_Y$ when $Y$ is smooth, 
we obtain $\chi(Y, \omega_{Y}) \ge 0$ in this case. When $Y$ has rational singularities, we get the same inequality by replacing $Y$  by a resolution of singularities.
 \end{proof}

\begin{rmk}
 Part (c) of the theorem gives evidence for a conjecture of Koll\'ar \cite[18.12.1]{kollar2} that $\chi(\omega_Y(V))\ge 0$ when $Y$ has generically large
 algebraic fundamental group. We will not recall what this means, but note that any variety satisfying the assumptions of (c) will satisfy the generic largeness condition.
Previous results in this direction were given by Jost-Zuo \cite{jz} and Deng-Wang \cite{ dw2}.
\end{rmk}
 
 \begin{cor}\label{cor:etaleVP}
Suppose that $(X,H)$ has the vanishing property. If $Y$ is an \'etale cover corresponding to
an open subgroup  $H\subset \hat \pi_1(Y)\subset \hat \pi_1(X)$, then $(Y,H)$ has the vanishing property.
%Let $X$ be a connected variety, and let $H\subset \hat \pi_1(X)$ be  closed normal subgroup of infinite index.
%\begin{enumerate}
% \item  ???? If   for some $H$-tower $\pi_n:X_n\to X$, any  $P\in Perv_{HM}(X)$ and $i\not=0$,
%$$ \frac{h^i(X_{n}, \pi_n^*P)}{\deg \pi_n}\to 0$$
%then $(X,H)$ has the vanishing property.
%
% \item If $X$ has the vanishing property with respect to $H$, then any connected \'etale cover has the vanishing property with respect to the preimage of $H$.
%\end{enumerate}
% 
\end{cor}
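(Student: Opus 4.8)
The plan is to exploit the finite étale covering map $\pi\colon Y\to X$ directly, reducing the vanishing property of $(Y,H)$ to that of $(X,H)$ by pushing forward along $\pi$. First I would dispose of the index condition. Since $\hat\pi_1(Y)$ is an open subgroup of $\hat\pi_1(X)$ it has finite index there, and as $[\hat\pi_1(X):H]$ is infinite while $[\hat\pi_1(X):\hat\pi_1(Y)]<\infty$, the subgroup $H$ must have infinite index in $\hat\pi_1(Y)$; moreover $H$ is normal in $\hat\pi_1(Y)$ because it is normal in the larger group. Next, given an arbitrary $H$-tower $\rho_n\colon Y_n\to Y$, corresponding to a descending chain of open subgroups $\hat\pi_1(Y)\supseteq\Gamma^1\supseteq\Gamma^2\supseteq\cdots$ with $\bigcap_n\Gamma^n=H$, I would observe that each $\Gamma^n$ is already open in $\hat\pi_1(X)$, so that the same varieties, viewed through $\sigma_n:=\pi\circ\rho_n\colon Y_n\to X$, form an $H$-tower of $X$, with $\deg\sigma_n=(\deg\pi)(\deg\rho_n)$.

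The heart of the argument is to compare cohomology on $Y_n$ for the two tower structures. Fix $Q\in Perv_{HM}(Y)$. Because $\pi$ is finite étale, $\pi_*$ is exact for the perverse $t$-structure and carries pure Hodge modules to pure Hodge modules without weight shift, so $\pi_*Q\in Perv_{HM}(X)$. Forming the cartesian square with $W_n=Y_n\times_X Y$ and projections $p\colon W_n\to Y_n$, $q\colon W_n\to Y$, proper base change along the finite map $\pi$ gives $\sigma_n^*\pi_*Q\cong p_*q^*Q$, whence $H^i(Y_n,\sigma_n^*\pi_*Q)\cong H^i(W_n,q^*Q)$. The relation $\sigma_n=\pi\circ\rho_n$ produces a section $s=(\mathrm{id},\rho_n)\colon Y_n\to W_n$ of the finite étale map $p$, and such a section is a clopen immersion identifying the connected variety $Y_n$ with a single connected component of $W_n$, on which $q^*Q$ restricts to $\rho_n^*Q$. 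Hence $\rho_n^*Q$ is a direct summand of $\sigma_n^*\pi_*Q$, and in particular
$$h^i(Y_n,\rho_n^*Q)\le h^i(Y_n,\sigma_n^*\pi_*Q).$$

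To finish, divide by $\deg\rho_n$ and use $\deg\sigma_n=(\deg\pi)(\deg\rho_n)$ to write
$$\frac{h^i(Y_n,\rho_n^*Q)}{\deg\rho_n}\le(\deg\pi)\,\frac{h^i(Y_n,\sigma_n^*\pi_*Q)}{\deg\sigma_n}.$$
Since $\pi_*Q\in Perv_{HM}(X)$ and $\{Y_n\to X\}$ is an $H$-tower of $X$, the vanishing property of $(X,H)$ forces the right-hand fraction to $0$ for $i\neq 0$; as $\deg\pi$ is a fixed constant independent of $n$, the left side also tends to $0$. Together with the index computation this verifies the vanishing property for $(Y,H)$ and completes the proof.

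The step I expect to require the most care is the claim that finite étale pushforward stays inside $Perv_{HM}$, i.e.\ preservation of purity with no weight shift, and the clean bookkeeping that an $H$-tower of $Y$ is literally an $H$-tower of $X$ with the degrees multiplying by the fixed factor $\deg\pi$; the geometric comparison via the section $s$ is then routine covering-space theory, and the rest is the same normalization-limit manipulation used in Proposition \ref{prop:mhm}.
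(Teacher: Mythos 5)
Your proof is correct and is essentially the paper's own argument: both compose the $H$-tower of $Y$ with the finite \'etale map to obtain an $H$-tower of $X$, note that $\pi_*Q\in Perv_{HM}(X)$, and exploit that $Q$ (hence $\rho_n^*Q$) is a direct summand of $\pi^*\pi_*Q$ --- a fact the paper simply asserts and you re-derive via the fiber-product section. Your additional bookkeeping (normality and infinite index of $H$ in $\hat\pi_1(Y)$, the base-change identification) just makes explicit what the paper leaves implicit.
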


Although this follows immediately from the theorem, we give an easier direct proof.
\begin{proof}
Let $f:Y\to X$ be the structure map. Choose an $H$-tower $\pi_n: Y_n\to Y$, then
$p\circ \pi_n: Y_n \to X$ is an $H$-tower of $X$.
If $P\in Perv_{HM}(Y)$, $f_*P\in Perv_{HM}(X)$ and $P$ is a summand of $f^*f_*P$.
Therefore given $i\not=0$,
$$ \frac{h^i(Y_n, \pi_n^*P)}{\deg \pi_n} \le (\deg f) \frac{h^i(Y_n, \pi_n^*f^*f_*P)}{\deg f\circ  \pi_n}\to 0$$

%Let $p:Y\to X$ be a connected \'etale cover, and let $H'= \pi_1(Y)\cap H$. Fix an $H$-tower $\pi_n:X_n\to X$. Then $Y_n= X_n\times_X Y\xrightarrow{q_n} Y$ is an $H'$-tower. Let $p_n:Y_n\to X_n$ denote the projections. If $P\in Perv_{HM}(Y)$, then $p_{n*} q_n^* P = \pi_n^* p_* P$.
%Therefore
%$$\frac{h^i(Y_n, q_n^*P)}{\deg q_n} = \frac{h^i(X_n, \pi_n^* p_* P)}{\deg \pi_n}\to 0$$
%when $i\not=0$.
\end{proof}

%\begin{prop}
% If $Y$ is normal projective and birational to a V-hyperbolic variety, and if $H$ is a VHS on a Zariski open subset of $Y$, then $\chi(S_Y(H))\ge 0$.
% In particular,  $\chi(Y, \omega_{Y}) \ge 0$ when $Y$ is smooth.
% If $Y$ is normal projective and it admits a generically finite map onto  a  V-hyperbolic variety, then $\chi(S_Y(H)) \ge 0$.
%\end{prop}
%
%\begin{proof}
% Suppose that there exists a birational morphism $\pi:Y\to X$, where $X$ is Koll\'ar-hyperbolic.
%We can assume that $H$ is defined on an open set $U$ over which $\pi$ is an isomorphism.
%%Let $d=\dim X$ and let $U = X-X_{sing}$. 
%% By Saito \cite{saito}, the trivial variation of Hodge structure on $\Q_U$  has a unique extension to a simple
%% object $IC(\Q_X)^H\in HM(X,d)$. 
% By \cite{saito3} $S_X(H) = \R \pi_*S_Y(H)$. Therefore $\chi(S_Y(H))=\chi(S_X(H))$. The same argument
% gives $\chi(S_Y(H))=\chi(S_X(H))$ when there is diagram of birational maps $X\leftarrow Z\to Y$.
% Therefore the 
%  inequality $\chi(S_Y(H))=\chi(S_X(H)) \ge 0$  now follows from proposition \ref{prop:mhm}.
% Since  $S_X(\Q_Y^H[\dim Y]) = \omega_Y$ when $Y$ is smooth, we obtain $\chi(Y, \omega_{Y}) \ge 0$.
%  
%  Suppose $f:Y\to X$ is generically finite and $X$ is a subvariety of   Koll\'ar-hyperbolic variety. Let $f:Y\to Z\to X^n$ be the Stein factorization.
%  Then $Z$ is Koll\'ar-hyperbolic by proposition \ref{prop:khyp}, and $Y\to Z$ is birational. Therefore $\chi(S_Y(H)) \ge 0$ by above.
%  
%
%\end{proof}
%

\section{Abelian varieties}

\begin{thm}\label{thm:av}
 Abelian varieties are V-hyperbolic.
\end{thm}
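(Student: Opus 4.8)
The plan is to reduce the vanishing property for an abelian variety $A$ to the generic vanishing theorem of Popa and Schnell. The key observation is that on an abelian variety, the natural tower of \'etale covers to use is the tower of multiplication-by-$n$ maps (or a tower whose intersection $H$ is trivial, so that $\tilde A_K = \tilde A$ is the universal cover). Let me take $H$ to be the trivial subgroup, so that the relevant $H$-towers are cofinal with multiplication maps $[n]:A\to A$, each of degree $n^{2d}$ where $d=\dim A$; since $\pi_1(A)\cong \Z^{2d}$ is residually finite and has trivial intersection of finite-index subgroups, $H=\{1\}$ has infinite index and such towers exist.

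The main point is that for a mixed Hodge module $M$ on $A$, the associated graded pieces $Gr^p_F DR(M)$ are coherent sheaves (or complexes) on $A$, and by Theorem \ref{thm:strictness} the cohomology $H^i(A_n,\pi_n^*P)\otimes\C$ decomposes as $\bigoplus_p H^i(A_n, \pi_n^* Gr^p_F DR(M))$, using that $\pi_n^* Gr^p_F DR(M)\cong Gr^p_F DR(\pi_n^*M)$ as in the proof of Proposition \ref{prop:mhm}. Thus it suffices to control $h^i(A_n, \pi_n^* \G)$ for each coherent complex $\G = Gr^p_F DR(M)$ and each $i\neq 0$. Here is where generic vanishing enters: the Popa--Schnell theorem \cite{ps} asserts that for a mixed Hodge module on an abelian variety, the graded de Rham complexes are GV-sheaves (more precisely, the relevant cohomology sheaves satisfy the generic vanishing condition with respect to the dual abelian variety $\hat A=\operatorname{Pic}^0(A)$), meaning the cohomology support loci $V^i(\G)=\{L\in \hat A : H^i(A, \G\otimes L)\neq 0\}$ have codimension $\ge i$ in $\hat A$ for $i>0$.

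First I would unwind $h^i(A_n,\pi_n^*\G)$ using the fact that pullback along $[n]$ (or the analogous isogeny) splits the cohomology according to characters. Concretely, $\pi_n:A_n\to A$ is an isogeny, and by the projection formula $\R\pi_{n*}\pi_n^*\G\cong \G\otimes \pi_{n*}\OO_{A_n}\cong \bigoplus_{L\in \ker(\hat\pi_n)} \G\otimes L$, where the sum runs over the $\deg\pi_n$ many line bundles $L$ in the kernel of the dual isogeny $\hat\pi_n:\hat A_n\to \hat A$. Therefore
\begin{equation}\label{eq:charsum}
h^i(A_n,\pi_n^*\G)=\sum_{L\in\ker(\hat\pi_n)} h^i(A, \G\otimes L).
\end{equation}
The count of $L$'s for which $H^i(A,\G\otimes L)\neq 0$ is the number of torsion points of $\hat A$ of the relevant order lying in the cohomology support locus $V^i(\G)$. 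By generic vanishing, $\dim V^i(\G)\le \dim\hat A - i = d-i$ for $i>0$, whereas the total number of points in $\ker(\hat\pi_n)$ grows like $\deg\pi_n\sim (\text{order})^{2d}$; the number of those torsion points landing in a subvariety of dimension $\le d-i$ grows at most like $(\text{order})^{2(d-i)}$, which is $o(\deg\pi_n)$ for $i>0$. Dividing \eqref{eq:charsum} by $\deg\pi_n$ and taking the limit therefore yields $0$.

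The hard part will be making the point-counting estimate precise and uniform across the tower: I need that the number of $L\in\ker(\hat\pi_n)$ with $L\in V^i(\G)$ grows strictly slower than $\deg\pi_n$, and that $h^i(A,\G\otimes L)$ is bounded independently of $L$ (the latter follows from semicontinuity and constructibility of the support loci, which cut $\hat A$ into finitely many strata). For towers not literally given by $[n]$ one must check the kernels of the dual isogenies become equidistributed enough that the fraction meeting a proper subvariety still tends to zero; this is where I expect the genuine work to lie, and it should follow from the fact that $V^i(\G)$ is a finite union of torsion translates of abelian subvarieties (a theorem of Popa--Schnell refining the structure of these loci), so that intersecting with the dense torsion subgroups coming from the tower gives a codimension bound on the count. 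Granting that structural input, the estimate is a clean asymptotic comparison and the vanishing property follows.
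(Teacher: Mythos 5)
Your overall strategy coincides with the paper's: split the cohomology of $\pi_n^*Gr^p_F DR(M)$ over the characters of the Galois group via the projection formula, pass between the perverse sheaf $P$ and the graded de Rham pieces using Theorem \ref{thm:strictness}, invoke the Popa--Schnell generic vanishing theorem (Theorem \ref{thm:ps}) to make the support loci $V^i_p$ proper, and then count torsion points of the tower lying on those loci. But two steps are genuinely missing, and one of your framing choices would send you down a harder road than necessary.

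First, you only ever treat $i>0$. The generic vanishing input you cite controls $V^i$ for positive $i$ (and this is all that Theorem \ref{thm:ps}, i.e.\ \cite[cor 2.7]{ps} as used here, provides), while the vanishing property demands the limit be zero for all $i\neq 0$; for the perverse normalization, cohomology in negative degrees is certainly present (already for $M=\Q^H[d]$). The paper closes this by Poincar\'e--Verdier duality, $H^i(X,P)\cong H^{-i}(X,DP)^*$, together with the fact that $D$ preserves $Perv_{HM}$, so the $i>0$ case applied to $DP$ yields $i<0$. This is a one-line fix, but your proposal does not contain it.

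Second, the counting step you defer as ``equidistribution'' is not an equidistribution problem, and your reduction to multiplication-by-$n$ towers does not work: $0$-towers are not cofinal with the $[n]$-towers (consider $\Gamma^n=2^n\Lambda$, whose dual character groups never contain characters of odd order), so the estimate must be proved for an arbitrary tower of finite-index subgroups $\Gamma^n\subset\Lambda=H_1(A,\Z)$ with $\bigcap\Gamma^n=0$. The structural input you invoke --- that $V^i$ is a finite union of torsion translates of abelian subvarieties --- is true but much stronger than needed. The paper instead uses the Sarnak--Adams result (Proposition \ref{prop:sa}): for \emph{any} real algebraic subset $W$ of a real torus, the torsion points of $W$ are captured by finitely many torsion translates of subtori; this applies to $V^i_p$ viewed merely as a real algebraic subset of $\hat A$, with no Hodge-theoretic structure theorem required. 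Once reduced to a translate of a subtorus $S_1$ with dual quotient lattice $p:\Lambda\to\Xi$ of strictly smaller rank, the count for an arbitrary tower is the elementary divisor computation of Lemma \ref{lemma:lattice}: $[\Xi:p(\Gamma^n)]/[\Lambda:\Gamma^n]\to 0$, which is exactly the statement your $(\mathrm{order})^{2(d-i)}$ heuristic proves only in the special case $\Gamma^n=n\Lambda$. With those two repairs (duality for $i<0$, and Sarnak--Adams plus the lattice lemma in place of equidistribution), your argument becomes the paper's proof; the boundedness of $h^i(A, Gr^p_F DR(M)\otimes L)$ over $L$, which you get from semicontinuity, is handled identically in the paper via the constants $m_p$.
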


The proof will require  the following facts starting with the an elementary lemma.

\begin{lemma}\label{lemma:lattice}
Let $\Lambda$ a lattice with a  sequence of finite index subgroups 
$$\Lambda \supset \Gamma^1\supset \ldots$$
such that $\bigcap \Gamma^i = 0$. If $p: \Lambda\to \Xi$ is a quotient lattice such that
$ \operatorname{rank} \Xi <  \operatorname{rank} \Lambda$,
then
$$\frac{[\Xi : p(\Gamma^n)]} {[\Lambda:\Gamma^n] }\to 0$$ 
\end{lemma}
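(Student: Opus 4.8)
The plan is to reduce everything to understanding the index of $\Gamma^n$ inside the kernel of $p$. Write $d = \operatorname{rank}\Lambda$ and identify $\Lambda$ with $\Z^d$, so that every subgroup is again free abelian. Set $K = \ker p$; since $p$ is surjective onto $\Xi$, we have $\operatorname{rank} K = d - \operatorname{rank}\Xi \ge 1$ by the rank hypothesis, so $K$ is a free abelian group of positive rank. For each $n$ put $K_n = \Gamma^n \cap K$ and $\Xi_n = p(\Gamma^n)$, noting that $\Xi_n$ has finite index in $\Xi$ because $\Gamma^n$ has finite index in $\Lambda$. The first and main step is the index factorization
$$[\Lambda : \Gamma^n] = [\Xi : \Xi_n]\,[K : K_n],$$
from which the claimed ratio is simply $[\Xi:\Xi_n]/[\Lambda:\Gamma^n] = 1/[K:K_n]$, reducing the whole statement to showing $[K:K_n]\to\infty$.

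To establish the factorization I would decompose $[\Lambda:\Gamma^n] = [\Lambda : \Gamma^n + K]\,[\Gamma^n + K : \Gamma^n]$. Applying the surjection $p$ (whose kernel $K$ is contained in $\Gamma^n + K$) identifies the first factor with $[\Xi : p(\Gamma^n)] = [\Xi:\Xi_n]$, while the second isomorphism theorem gives $(\Gamma^n + K)/\Gamma^n \cong K/(K\cap\Gamma^n) = K/K_n$, so the second factor equals $[K:K_n]$. This is the crux of the argument; once it is in place, the rest is elementary.

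It remains to show $[K:K_n]\to\infty$. Since the $\Gamma^n$ are nested, so are the $K_n$, hence $[K:K_n]$ is non-decreasing and it suffices to rule out boundedness. Observe that $\bigcap_n K_n = K \cap \bigcap_n \Gamma^n = 0$ by hypothesis. If $[K:K_n]\le M$ for all $n$, then each element of the finite group $K/K_n$ has order dividing $M!$, so $M!\,K \subseteq K_n$ for every $n$, whence $M!\,K \subseteq \bigcap_n K_n = 0$. As $K$ is torsion-free of positive rank, this forces $K=0$, a contradiction. Therefore $[K:K_n]\to\infty$ and the ratio tends to $0$. The only place the rank hypothesis enters is in guaranteeing $\operatorname{rank} K \ge 1$, i.e. that the kernel is nonzero; this is precisely what makes the reciprocal index blow up, so it is both the essential hypothesis and, together with the factorization, the heart of the proof.
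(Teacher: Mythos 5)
Your proof is correct, and it takes a genuinely different route from the paper's. The paper argues in coordinates: it identifies $\Lambda=\Z^\ell$ and $\Xi=\Z^k$ with $p$ the projection onto the first $k$ factors, invokes elementary divisor theory to put each $\Gamma^n$ in diagonal form $d_1(n)\Z\oplus\cdots\oplus d_\ell(n)\Z$ with each $d_i(n)\to\infty$, and reads off the ratio as $1/(d_{k+1}(n)\cdots d_\ell(n))$. You instead prove the basis-free identity $[\Lambda:\Gamma^n]=[\Xi:p(\Gamma^n)]\cdot[K:K\cap\Gamma^n]$, where $K=\ker p$, via the chain $\Gamma^n\subseteq\Gamma^n+K\subseteq\Lambda$ and the second isomorphism theorem, so the ratio equals exactly $1/[K:K\cap\Gamma^n]$; you then show this index is unbounded using $\bigcap_n(K\cap\Gamma^n)=0$ and torsion-freeness of $K\neq 0$. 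Your route buys more than elegance: the paper's reduction implicitly requires a basis simultaneously adapted to $p$ and diagonalizing $\Gamma^n$, and such a basis need not exist (e.g.\ $\Gamma=\Z(2,1)\oplus\Z(0,2)\subset\Z^2$ with $p$ the first projection admits no adapted diagonal basis: the forced values $a=b=2$ would make all second coordinates of $\Gamma$ even, contradicting $(2,1)\in\Gamma$); moreover, even when per-$n$ adapted diagonalization is available, the individual divisors need not tend to infinity (one can have $d_1(n)=1$ for all $n$ while $\bigcap\Gamma^n=0$). What makes the lemma true is that the ``vertical'' index $[K:K\cap\Gamma^n]$ blows up, which is precisely what your factorization isolates; so your argument not only reaches the same conclusion but supplies rigor that the paper's brisk normal-form sketch leaves implicit, while the paper's approach buys only a quicker, explicit coordinate formula.
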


\begin{proof}
Let $\ell = \operatorname{rank} \Lambda$ and $ k= \operatorname{rank} \Xi$.
 We can identify $\Lambda = \Z^\ell$, $\Xi = \Z^k$ where $p$ is the projection onto the first $k$ factors.  By elementary divisor theory, we can 
 also assume that 
 $\Gamma^n = d_1(n)\Z\oplus \ldots \oplus d_\ell(n) \Z$ where each $d_i(n)$ is a sequence of integers dividing each successor and going to $\infty$.
 Then 
 $$ \frac{[\Xi : p(\Gamma^n)]} {[\Lambda:\Gamma^n] }= \frac{1}{d_{k+1}(n)\ldots d_\ell(n)} \to 0$$
\end{proof}

Next, we have the following result of Sarnak and Adams \cite[prop 1.6]{sa}.

\begin{prop}\label{prop:sa}
 Let $S$ be a real torus.
 If $W \subset S$ is an (real) algebraic subset, then there exists a finite number  of subtori $S_i\subset T$ and torsion points $s_i\in S_{tors}$
 such that  $s_i+S_i\subseteq W$ and
 $$ S_{tors}\cap W = S_{tors}\cap \bigcup_i (s_i+S_i)$$
\end{prop}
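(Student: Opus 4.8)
Write $S=\R^n/\Z^n$, so that $S_{tors}=(\Q/\Z)^n$, the subtori $S_i$ are the images of rational linear subspaces of $\R^n$, and the sets $s_i+S_i$ are their translates by torsion points. It suffices to produce \emph{finitely many} torsion cosets $s_i+S_i$, each contained in $W$, whose union contains $S_{tors}\cap W$. Indeed, if $\bigcup_i(s_i+S_i)\subseteq W$ then $S_{tors}\cap\bigcup_i(s_i+S_i)\subseteq S_{tors}\cap W$, while $S_{tors}\cap W\subseteq\bigcup_i(s_i+S_i)$ gives the reverse inclusion after intersecting with $S_{tors}$; hence the two sides of the asserted identity coincide.

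The plan is to transport the problem to the algebraic torus. The map $x\mapsto(e^{2\pi i x_1},\dots,e^{2\pi i x_n})$ identifies $S$ with the compact subgroup $(S^1)^n\subset(\C^*)^n$, carries $S_{tors}$ isomorphically onto the group $\mu^n$ of torsion points of $(\C^*)^n$ (all of which already lie on $(S^1)^n$), and carries subtori and their torsion translates onto compact real forms of subtori of $(\C^*)^n$ and their torsion translates. Since $W$ is cut out by finitely many real trigonometric polynomials $f_j(x)=\sum_m c_m^{(j)}e^{2\pi i\langle m,x\rangle}$, the substitution $z_k=e^{2\pi i x_k}$ turns each $f_j$ into a Laurent polynomial $F_j$; I set $W^{\C}=\{z\in(\C^*)^n:F_j(z)=0\}$, a complex subvariety with $W=W^{\C}\cap(S^1)^n$. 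Because every torsion point of $(\C^*)^n$ already sits in $(S^1)^n$, one gets $S_{tors}\cap W=\mu^n\cap W^{\C}$: the torsion points of $S$ lying on $W$ are exactly the torsion points of $(\C^*)^n$ lying on $W^{\C}$.

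I would then invoke the Manin--Mumford theorem for the multiplicative group (Ihara--Serre--Tate for $n=1$, and Laurent in general): the torsion points on any subvariety $V\subseteq(\C^*)^n$ form a finite union of the torsion points of torsion cosets $\eta_i G_i\subseteq V$ of subtori $G_i$. The engine of that result is the Galois action $\zeta\mapsto\zeta^p$ on roots of unity together with Mann-type bounds on vanishing sums of roots of unity, which force the defining equations to be stable under multiplication by $p$ for infinitely many primes $p$ and hence to cut out unions of cosets. Applied to $W^{\C}$, this writes $\mu^n\cap W^{\C}$ as a finite union of $\mu^n\cap\eta_iG_i$ with $\eta_iG_i\subseteq W^{\C}$; intersecting back with $(S^1)^n$ yields torsion cosets $s_i+S_i=\eta_iG_i\cap(S^1)^n\subseteq W$ whose union contains $S_{tors}\cap W$, which is exactly what the first paragraph requires.

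The step I expect to be most delicate is the descent of this arithmetic input, because the coefficients $c_m^{(j)}$ of a general real algebraic $W$ are arbitrary complex numbers, whereas the Galois/Mann machinery needs equations over $\overline{\Q}$. This is dealt with by first replacing $W^{\C}$ with the Zariski closure of its torsion points $\mu^n\cap W^{\C}$: this closure is contained in $W^{\C}$ and has the same torsion points, and being the Zariski closure of a set of $\overline{\Q}$-points it is automatically defined over $\overline{\Q}$, so the theorem applies to it. The remaining bookkeeping---matching ``subtorus of $S$'' with ``compact form of a subtorus of $(\C^*)^n$'' and verifying the two torsion-intersection identities used above---is routine.
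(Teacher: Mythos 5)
\begin{rmk}
Your proposal is correct, and it matches the route behind the paper's treatment: the paper gives no internal proof of this proposition, quoting it from Sarnak--Adams \cite[prop.\ 1.6]{sa}, whose argument likewise rests on the torsion-coset theorem for the multiplicative group (Ihara--Serre--Tate, Laurent). Your reduction is sound at every step --- trigonometric equations become Laurent polynomials under $z_k=e^{2\pi i x_k}$, the torsion of $(\C^*)^n$ lies entirely on $(S^1)^n$ so $S_{tors}\cap W=\mu^n\cap W^{\C}$, and you correctly handle the only delicate point (arbitrary complex coefficients) by passing to the Zariski closure of $\mu^n\cap W^{\C}$, which is defined over $\overline{\Q}$ and has the same torsion points --- so nothing further is needed.
\end{rmk}
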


Let us  set up the notation for the corollary. Let
$$\Lambda \supset \Gamma^1\supset \ldots$$
be as in lemma \ref{lemma:lattice}. 
The Pontryagin dual $S = Hom(\Lambda, U(1))$ is a torus.
Then $G_n = \Lambda/\Gamma^n$ forms an inverse system
 $$\ldots G_3\to  G_2\to G_1$$
The duals $T_n = Hom(G_n, U(1))$  form a chain
$$T_1\subset T_2\subset T_3\ldots \subset  S_{tors}$$
such that $\bigcup T_n = S_{tors}$.

\begin{cor}\label{cor:sa}
With the above notation, let $W\subset S$ be a proper algebraic subset, then
 $$\frac{|T_n\cap W|}{|G_n|}\to 0$$
\end{cor}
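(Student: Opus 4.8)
The plan is to combine the Sarnak--Adams description of the torsion points lying on $W$ (Proposition~\ref{prop:sa}) with the elementary index bound of Lemma~\ref{lemma:lattice}, using the standard duality between subtori of $S$ and quotient lattices of $\Lambda$. First I would apply Proposition~\ref{prop:sa} to write
$$S_{tors}\cap W = S_{tors}\cap \bigcup_{i=1}^N (s_i+S_i),$$
with each $s_i+S_i\subseteq W$ and each $S_i\subseteq S$ a subtorus. Because $W$ is a \emph{proper} algebraic subset, no translate $s_i+S_i$ can equal all of $S$, so each $S_i$ is a proper subtorus. Under the duality $S=Hom(\Lambda,U(1))$, such an $S_i$ is the annihilator $S_i=L_i^\perp$ of a saturated sublattice $L_i\subseteq\Lambda$ with $\operatorname{rank}L_i\ge 1$, and $\Xi_i:=\Lambda/L_i$ is then a genuine quotient lattice with $\operatorname{rank}\Xi_i<\operatorname{rank}\Lambda$.

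Next I would reduce the counting problem to computing $|T_n\cap S_i|$. Since $T_n\subseteq S_{tors}$, intersecting the displayed identity with $T_n$ gives $T_n\cap W=\bigcup_i \bigl(T_n\cap(s_i+S_i)\bigr)$. For each $i$, the set $T_n\cap(s_i+S_i)$ is the intersection of the subgroup $T_n$ with a coset of the subgroup $S_i$; hence it is either empty or a single coset of $T_n\cap S_i$, so in all cases $|T_n\cap(s_i+S_i)|\le |T_n\cap S_i|$. Identifying $T_n=(\Gamma^n)^\perp$ inside $S$, I would then compute $T_n\cap S_i=(\Gamma^n)^\perp\cap L_i^\perp=(\Gamma^n+L_i)^\perp$, a group of order $[\Lambda:\Gamma^n+L_i]$, while $|T_n|=|G_n|=[\Lambda:\Gamma^n]$.

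Finally I would invoke Lemma~\ref{lemma:lattice} with the quotient map $p:\Lambda\to\Xi_i=\Lambda/L_i$. Since $p(\Gamma^n)=(\Gamma^n+L_i)/L_i$, the third isomorphism theorem gives $[\Xi_i:p(\Gamma^n)]=[\Lambda:\Gamma^n+L_i]$, so
$$\frac{|T_n\cap(s_i+S_i)|}{|G_n|}\le \frac{[\Lambda:\Gamma^n+L_i]}{[\Lambda:\Gamma^n]}=\frac{[\Xi_i:p(\Gamma^n)]}{[\Lambda:\Gamma^n]}\longrightarrow 0$$
by the lemma, the rank drop $\operatorname{rank}\Xi_i<\operatorname{rank}\Lambda$ being exactly the hypothesis needed. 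Summing over the finitely many indices $i$ then yields $|T_n\cap W|/|G_n|\to 0$, as desired. The main point to get right is the passage from the subtorus $S_i$ furnished by Sarnak--Adams to a quotient lattice to which Lemma~\ref{lemma:lattice} applies: one must check that a proper subtorus corresponds to a strict rank drop and handle the translation by $s_i$ through the coset-counting above. The remaining steps are formal.
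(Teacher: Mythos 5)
Your proof is correct and takes essentially the same route as the paper's: reduce via Proposition~\ref{prop:sa} to finitely many translated subtori, dualize each proper subtorus $S_i$ to a quotient lattice $\Xi_i=\Lambda/L_i$ of strictly smaller rank (the paper's $\Xi = Hom_{cont}(S_1,U(1))$), identify $|T_n\cap S_i|=[\Xi_i:p(\Gamma^n)]$, and conclude by Lemma~\ref{lemma:lattice}. The only (harmless) variation is that you dispose of the translate $s_i$ by the coset bound $|T_n\cap(s_i+S_i)|\le |T_n\cap S_i|$, whereas the paper notes instead that $s_i\in T_n$ for almost all $n$; both work, and you also make explicit the rank-drop check that the paper leaves implicit.
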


\begin{proof}
 By the proposition, it suffices to assume that $W=s+S_1$, where $s\in S_{tors}$ and $S_1$ is a subtorus.
 Then $\Xi = Hom_{cont}(S_1, U(1))$ is quotient of $\Lambda$.
 Since $s\in T_n$ for almost all $n$, we can assume that $W=S_1$.
We have an isomorphism
$$T_n\cap S_1 \cong  Hom(\Xi/p(\Gamma^n), U(1))$$
Therefore the corollary follows from lemma \ref{lemma:lattice}.
\end{proof}

Finally we have the generic vanishing theorem of Popa and Schnell \cite[cor 2.7]{ps}.

\begin{thm}\label{thm:ps}
 Let $M$ be a Hodge module on an abelian variety $A$. Then for each $i>0$ and $p$, the set
 $$V^i(Gr^p_F DR(M))  = \{L\in \hat A\mid H^i(A, Gr^p_F DR(M) \otimes L)\not=0\}$$
 is proper Zariski closed subset of the dual variety  $\hat A = Pic^0(A)$.
\end{thm}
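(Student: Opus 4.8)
The plan is to deduce the coherent generic vanishing statement from the topological generic vanishing theorem for perverse sheaves on abelian varieties, using Saito's Hodge-to-de Rham degeneration to pass between the full de Rham complex and its graded pieces. Fix the underlying filtered $D$-module $(\mathcal{M},F)$ of $M$ and write $g=\dim A$. First I would dispose of the easy half, that each $V^i=V^i(Gr^p_F DR(M))$ is Zariski closed: since $Gr^p_F DR(M)\in D^b_{coh}(A)$ and $A$ is proper, cohomology and base change make $L\mapsto \dim H^i(A,Gr^p_F DR(M)\otimes L)$ upper semicontinuous on $\hat A$, so its nonvanishing locus $V^i$ is closed. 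It then remains to produce a dense open set of $L\in\hat A$ at which $H^i(A,Gr^p_F DR(M)\otimes L)=0$ for all $i>0$ and all $p$ simultaneously, which forces every $V^i$ with $i>0$ to be proper.

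The mechanism for producing such $L$ is to feed in flat line bundles. Let $A^\natural\to\hat A$ be the universal vector extension, parametrizing line bundles $L$ with a flat connection $\nabla$, the fiber over $L$ being a torsor under $H^0(A,\Omega^1_A)$. For $(L,\nabla)\in A^\natural$ the twist $\mathcal{M}_\nabla:=\mathcal{M}\otimes_{\OO_A}(L,\nabla)$ is again holonomic; under Riemann–Hilbert its de Rham complex is $P\otimes L_\rho$, where $P=DR(\mathcal{M})$ is the underlying perverse sheaf and $L_\rho$ is the rank-one local system of $(L,\nabla)$. Crucially, the flat twist alters only the differentials and not the underlying $\OO$-modules or the symbol, so
$$Gr^p_F DR(\mathcal{M}_\nabla)\cong Gr^p_F DR(\mathcal{M})\otimes L.$$
Now I invoke the topological generic vanishing theorem of Gabber and Loeser: for the perverse sheaf $P$ on $A$, the characters $\rho$ with $\mathbb{H}^i(A,P\otimes L_\rho)\neq 0$ form a proper (high-codimension) subset of the character variety for each $i\neq 0$. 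Equivalently, for $(L,\nabla)$ outside a proper subset of $A^\natural$ the total de Rham cohomology $\mathbb{H}^i(A,DR(\mathcal{M}_\nabla))$ vanishes for all $i\neq 0$.

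The decisive step is to convert this vanishing of the total de Rham cohomology into vanishing of the individual graded pieces, and at the same time to descend from the affine bundle $A^\natural$ to the projective base $\hat A$. For this I would run the Rees-module degeneration: form the Rees module of $(\mathcal{M},F)$ over $A\times\mathbb{A}^1_z$, whose de Rham complex specializes to $DR(\mathcal{M}_\nabla)$ away from $z=0$ and to $\bigoplus_p Gr^p_F DR(M)\otimes L$ at $z=0$. Saito's strictness of the Hodge filtration on the cohomology of a projective variety (Theorem~\ref{thm:strictness}, in its $\nabla$-twisted form) forces the Hodge-to-de Rham spectral sequence to degenerate uniformly, giving
$$\sum_p \dim H^i(A,Gr^p_F DR(M)\otimes L)=\dim \mathbb{H}^i(A,DR(\mathcal{M}_\nabla))$$
for any flat $\nabla$ on $L$. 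Combined with the previous paragraph, for $L$ lying under a generic point of $A^\natural$ the left-hand side vanishes for every $i>0$, hence each summand vanishes; this supplies the required open set and completes the argument.

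I expect this last step to be the main obstacle. The cohomology support loci of the graded pieces live naturally on $\hat A=Pic^0(A)$, whereas the de Rham vanishing coming from Gabber–Loeser is indexed by the larger space $A^\natural$. The delicate point is precisely the $\nabla$-twisted strictness, equivalently the flatness over $\mathbb{A}^1_z$ of the Laumon–Rothstein Fourier–Mukai transform of the Rees module, which is what makes the specialization to $z=0$ compatible with the projection $A^\natural\to\hat A$. This is the technical heart of Popa–Schnell \cite{ps}, and it is exactly where the hypothesis that $M$ is a Hodge module, rather than an arbitrary filtered holonomic $D$-module, is genuinely used.
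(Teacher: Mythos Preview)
The paper does not supply its own proof of this statement: it is quoted as \cite[cor.~2.7]{ps} and then used as a black box in the proof of Theorem~\ref{thm:av}. So there is no argument in the paper to compare your sketch against; in effect you and the paper are doing the same thing, namely deferring to Popa--Schnell, except that you have unpacked part of their strategy.

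Your outline is recognizable and the crux is correctly located, but two points deserve flagging. First, the generic vanishing for perverse sheaves on complex abelian varieties is usually credited to Kr\"amer--Weissauer or Schnell rather than Gabber--Loeser (whose setting is $\ell$-adic); and in any case Popa--Schnell argue primarily on the $D$-module side via the Laumon--Rothstein transform rather than by importing the topological statement. Second, and more substantively, your displayed identity
\[
\sum_p \dim H^i(A,Gr^p_F DR(M)\otimes L)=\dim \mathbb{H}^i\bigl(A,DR(\mathcal{M}_\nabla)\bigr)
\]
cannot be read off from Theorem~\ref{thm:strictness} as stated, because after twisting by a non-unitary flat $(L,\nabla)$ the module $\mathcal{M}_\nabla$ is no longer a Hodge module, so Saito's strictness does not apply to it directly. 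You acknowledge this in your last paragraph, so you are not deceiving yourself; but then the earlier appeal to Theorem~\ref{thm:strictness} ``in its $\nabla$-twisted form'' is a placeholder rather than an argument. The actual content of \cite{ps} is precisely establishing this degeneration---equivalently, the flatness of the Fourier--Mukai transform of the Rees module over the $z$-line---and that is exactly where the Hodge-module hypothesis earns its keep. Your sketch is honest about its own gap but does not close it.
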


\begin{proof}[Proof of theorem \ref{thm:av}]
 Let $A$ be an abelian variety.
 Let $\Lambda = H_1(A,\Z)$, and let $$\Lambda \supset \Gamma^1\supset \ldots$$
 a  sequence of finite index subgroups  such that $\bigcap \Gamma^i = 0$. This gives a $0$-tower of \'etale covers
 $\pi_n: A_n\to A$. The Galois group $G_n= Gal(A_n/A) \cong \Lambda/\Gamma^n$.
 We can identify $\hat A$, as a real torus, with the Pontryagin dual of $\Lambda$ by sending a character $\chi\in Hom(\Lambda, U(1))$ to the line
 bundle $L_\chi$ with monodromy $\chi$. Therefore the character group
$T_n = Hom(G_n, U(1))$ embeds into $\hat A_{tors}$. 
%The inverse system
% $$\ldots G_3\to  G_2\to G_1$$
% gives rise to a inductive system
% $$T_1\to T_2\to T_3\ldots$$
% Note that we have a noncanonical isomorphism $T_n \cong  \Lambda/\Gamma^n$.
 The group $G_n$ acts on $\pi_{n_*}\OO_{A_n} $. We can decompose it into isotypic components
 $$\pi_{n_*}\OO_{A_n} = \bigoplus_{\chi\in T_n } L_\chi$$
%Each $L_\chi$ is a line bundle in the torsion subgroup $\hat A_{tors}$. The map $\chi\mapsto L_\chi$ allows
%us to view the above groups as 
%$$T_1\subset T_2\subset T_2\ldots \subset \hat T_{tors}$$

 Choose a Hodge module $M$ on $A$ and an integer $p$,
 and set
 $$V_p^i = V^i(Gr^p_F DR(M))$$
By the projection formula
$$H^i(A_n, \pi_n^* Gr^p_F DR(M)) \cong  \bigoplus_{\chi\in T_n }H^i(A,  Gr^p_F DR(M)\otimes L_\chi)$$
Therefore
\begin{equation*}
\begin{split}
 h^i(A_n, \pi_n^* Gr^p_F DR(M) &= \sum_{\chi\in T_n }h^i(A,  Gr^p_F DR(M)\otimes L_\chi)\\
 &\le m_p|T_n \cap V_p^i|
\end{split}
\end{equation*}
where $m_p$ is the maximum value attained by $h^i(A,  Gr^p_F DR(M)\otimes L)$ as $L$ varies over $V^i_p$.
Let $P$ be the perverse sheaf underlying $M$.
By theorem \ref{thm:strictness}, it follows that
$$\frac{h^i(A_n, \pi_n^* P)}{\deg \pi_n}\le \sum_p m_p\frac{|T_n \cap V_p^i|}{|G_n|}$$
By corollary \ref{cor:sa} and theorem \ref{thm:ps}, the right side goes to $0$ as $n\to \infty$ when $i>0$.
Therefore 
$$\lim_{n\to \infty}\frac{h^i(A_n, \pi_n^* P)}{\deg \pi_n} = 0$$
when $i>0$.
We deduce the vanishing of the same expression for $i<0$ by Poincar\'e-Verdier duality
$$H^i(X, P) \cong H^{-i}(X, DP)^*$$
and the fact  that $D:Perv(X)\to Perv(X)$  preserves $Perv_{HM}(X)$ by \cite[\S 2, \S5]{saito}.
\end{proof}

\begin{cor}
  If $f:X\to A$ is a map from a normal projective  variety to an abelian variety which is finite over its image, then $X$ is
  V-hyperbolic.
\end{cor}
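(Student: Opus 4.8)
The plan is to realize $X$ as a finite cover of a suitable abelian variety and then invoke Theorem \ref{thm:av} together with Theorem \ref{thm:finVhyp}(a). The only subtlety is that $f(X)$ need not generate $A$, so Theorem \ref{thm:finVhyp}(a) cannot be applied to $f$ directly; one must first pass to the abelian subvariety generated by the image.

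First I would set $Z = f(X)$ and let $B \subseteq A$ be the abelian subvariety generated by the differences $Z - Z$. After composing $f$ with a translation of $A$ we may assume $Z \subseteq B$; since a translation induces the identity on $\pi_1$ of an abelian variety, this does not affect the induced map on fundamental groups. We are thus reduced to proving the statement with $A$ replaced by $B$, where now $f : X \to B$ is finite over its image and $Z$ generates $B$. By Theorem \ref{thm:av}, $B$ is V-hyperbolic.

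The key step is to verify the hypothesis of Theorem \ref{thm:finVhyp}(a), namely that $f(\pi_1(X)) \subseteq \pi_1(B)$ has finite index. To this end consider, for each $N$, the morphism
$$g_N : (X \times X)^N \to B, \qquad ((x_i,y_i))_{i=1}^N \mapsto \sum_{i=1}^N \bigl(f(x_i) - f(y_i)\bigr).$$
Its image is the constructible set $N(Z-Z)$, and because $Z - Z$ generates $B$ and contains $0$, the increasing union $\bigcup_N N(Z-Z)$ is all of $B$; by irreducibility and a dimension count $g_N$ is surjective for $N \gg 0$. Fixing such an $N$, the source $(X \times X)^N$ is normal and projective, so $g_N$ is proper and surjective, and Lemma \ref{lemma:genfin} shows that $g_N(\pi_1((X\times X)^N))$ has finite index in $\pi_1(B)$. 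Since $\pi_1(B)$ is abelian and $g_N$ is, on $\pi_1$, a sum of copies of $\pm f_*$, its image is the subgroup generated by $f(\pi_1(X))$, which equals $f(\pi_1(X))$ itself. Hence $f(\pi_1(X))$ has finite index in $\pi_1(B)$.

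With this in hand the proof concludes at once: $f : X \to B$ is finite over its image and satisfies the finite-index condition, $B$ is V-hyperbolic by Theorem \ref{thm:av}, and therefore Theorem \ref{thm:finVhyp}(a) shows that $X$ is V-hyperbolic (with respect to the preimage of the relevant $H \subset \hat\pi_1(B)$). I expect the main obstacle to be precisely the finite-index verification of the previous paragraph, since $f(X)$ may be a proper subvariety of $A$; everything else is a direct application of the two cited theorems.
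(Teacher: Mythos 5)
Your proof is correct, and it reaches the same overall structure as the paper's --- pass to an abelian subvariety $B\subseteq A$ on which the image of $\pi_1(X)$ has finite index, then combine Theorem \ref{thm:av} with Theorem \ref{thm:finVhyp}(a) --- but the key finite-index verification is done by a genuinely different argument. The paper disposes of it in one line of Hodge theory: $f(H_1(X,\Q))\subset H_1(A,\Q)$ is a sub-Hodge structure, hence corresponds up to isogeny to an abelian subvariety $B\subseteq A$, and after replacing $A$ by $B$ the image $f(H_1(X,\Z))$ (which is all of $f(\pi_1(X))$, as $\pi_1(A)$ is abelian) has finite index. You instead take $B$ to be the abelian subvariety generated by $Z-Z$ with $Z=f(X)$, prove surjectivity of the difference-sum maps $g_N:(X\times X)^N\to B$ by the standard stabilization argument for the increasing chain $N(Z-Z)$ of irreducible closed subsets, and then apply Lemma \ref{lemma:genfin} to $g_N$, identifying $\im g_{N*}$ with $f_*\pi_1(X)$ because the latter is already a subgroup of the abelian group $\pi_1(B)$; this is a clean piece of group theory and your steps check out (in particular $(X\times X)^N$ is normal and projective, so the lemma applies, and the translation reducing to $0\in Z$ is harmless on $\pi_1$). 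What each approach buys: the paper's is shorter and identifies $B$ intrinsically as the image of the Albanese-induced map, whereas yours is more elementary --- it avoids invoking the Hodge-structure correspondence altogether, at the cost of needing Lemma \ref{lemma:genfin} and the surjectivity argument; of course the two subvarieties $B$ agree up to translation, since $f$ factors through $\mathrm{Alb}(X)$ and the subvariety generated by $Z-Z$ is exactly the image of $\mathrm{Alb}(X)\to A$, whose $H_1$ is $f_*H_1(X,\Q)$ up to finite index.
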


\begin{proof}
 The image $f(H_1(X,\Q))\subset H_1(A,\Q)$ is a sub Hodge structure. Therefore it corresponds up to isogeny to a sub abelian variety of $B\subseteq A$.
 After replacing $A$ by $B$, we can assume that $f(H_1(X,\Z))$ has finite index. The  previous theorem together with theorem \ref{thm:finVhyp} implies the corollary.
\end{proof}

\begin{cor}
 Given  a normal projective variety $X$, we can find a  V-hyperbolic variety $Y$ and  a finite map $Y\to X$. If $X$ is smooth,
$Y$ can be chosen smooth.
\end{cor}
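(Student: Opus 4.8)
The plan is to repeat the construction of Corollary~\ref{cor:finKhyp} essentially verbatim, feeding in V-hyperbolicity in place of Koll\'ar-hyperbolicity. Set $d=\dim X$; we may assume $d\ge 1$. Instead of a smooth Koll\'ar-hyperbolic variety of dimension $d$, I would take a $d$-dimensional abelian variety $A$: it is smooth, it has the required dimension, and it is V-hyperbolic by Theorem~\ref{thm:av}. (Any smooth V-hyperbolic variety of dimension $d$ would serve equally well; abelian varieties are convenient because they exist in every dimension.)

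Next I would set $Y\subset X\times A$ to be a complete intersection of $d$ very ample divisors in general position. Since $\dim(X\times A)=2d$ and we impose $d$ divisors, $\dim Y=d$. By Bertini, $Y$ is irreducible and normal, and it is smooth whenever $X$ is smooth, because in that case $X\times A$ is smooth; at each step the variety being cut has dimension $\ge d+1\ge 2$, so normality is preserved. Both projections $Y\to X$ and $Y\to A$ are then proper maps between $d$-dimensional varieties, and the general position of the very ample divisors forces all their fibres to be $0$-dimensional, so both are finite.

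With this in hand the conclusion is immediate. The projection $Y\to A$ is a finite map from a normal projective variety onto the V-hyperbolic variety $A$, so Theorem~\ref{thm:finVhyp}(a) shows that $Y$ is V-hyperbolic (with respect to the preimage of the subgroup witnessing the V-hyperbolicity of $A$). The other projection $Y\to X$ is the sought-for finite map, and $Y$ is smooth when $X$ is smooth, as noted above.

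The only step that is not a direct citation is the finiteness of both projections, which I expect to be the main (if standard) obstacle. It amounts to a relative Bertini statement: restricting a very ample line bundle on $X\times A$ to a fibre $\{x\}\times A\cong A$ or $X\times\{a\}\cong X$ again yields a very ample bundle on a $d$-dimensional variety, and $d$ sufficiently general sections cut each such fibre down to dimension $0$ simultaneously, so neither projection acquires a positive-dimensional fibre. This is exactly the finiteness already used in Corollary~\ref{cor:finKhyp}, so in the write-up I would simply invoke Bertini as there.
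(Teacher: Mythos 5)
Your proposal is correct and coincides with the paper's own proof: the paper simply says the argument of Corollary \ref{cor:finKhyp} should be repeated with $Z$ taken to be an abelian variety, which is exactly your construction (abelian variety via Theorem \ref{thm:av}, complete intersection in $X\times A$, Bertini, finiteness of both projections, and Theorem \ref{thm:finVhyp}(a) to transfer V-hyperbolicity along $Y\to A$). The details you supply about Bertini and the finiteness of the projections are precisely what the paper leaves implicit.
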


\begin{proof}
The argument is identical to the proof of  corollary  \ref{cor:finKhyp}, where $Z$ should be taken to be an abelian variety.
\end{proof}

 \section{The fibration theorem}

Our main theorem, that we call the fibration
theorem, gives evidence for conjecture \ref{conj:main}.

% let us say that an \'etale  tower $\pi_n:X_n\to X$ of a  possibly reducible but connected algebraic variety has the {\em vanishing property} if 
% it satisfies  the conclusion of theorem \ref{thm:main}, i.e. for any   $P\in Perv_{HM}(X)$ and $i\not=0$,
%$$ \frac{h^i(X_{n}, \pi_n^*P)}{\deg \pi_n}\to 0$$
%The proof of theorem \ref{thm:main} will proceed by induction with the help of the following key theorem.

%OLD VERSION:
%\begin{thm}\label{thm:key}
%Let  $f: X\to C$ be a surjective map, with connected fibres, of a normal projective variety onto a curve of  genus $g>0$.
%  Let $H\subset \hat \pi_1(X)$ be a closed normal subgroup. Suppose that
%  
%  \begin{enumerate}
%\item[(a)]    $\im (H)\subset \hat \pi_1(C)$ has infinite index,
%
%\item[(b)] for all $y\in C$, the preimage $H_y$ of $H$ in   $\hat \pi_1(X_y)$ has infinite index in $\hat \pi_1(X_y)$, 
%
%% \item[(b)] $H\cap \ker [\hat \pi_1(X)\to \hat \pi_1(C)]$
%%  has infinite index in $ \ker [\hat \pi_1(X)\to \hat \pi_1(C)]$, 
%%  and
%  \item[(c)]and for all $y\in C$, $(X_y, H_y)$ has the vanishing property.
%\end{enumerate}
%  Then $X$  has the vanishing property with respect to $H$.
%\end{thm}

\begin{thm}\label{thm:key}
Let  $f: X\to C$ be a surjective map, with connected fibres, of a normal projective variety onto a curve of  genus $g>0$.
%Assume that $f$ admits a section $\sigma$.
  Let $H\subset \hat \pi_1(X)$ be a closed normal subgroup. Suppose that
  
  \begin{enumerate}
\item[(a)]  the  image  $f (H)\subset \hat \pi_1(C)$, which is closed and normal, has infinite index,

\item[(b)] for all $y\in C$, the preimage $H_y$ of $H$ in   $\hat \pi_1(X_y)$ has infinite index in $\hat \pi_1(X_y)$, 

  \item[(c)]and for all $y\in C$, $(X_y, H_y)$ has the vanishing property.
\end{enumerate}
  Then $X$ is V-hyperbolic with respect to $H$.
\end{thm}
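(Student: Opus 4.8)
The plan is to fibre the computation over $C$ and play the sublinear growth coming from the base against that coming from the fibres. Since $Perv_{HM}(X)$ is semisimple and $h^i(X_n,\pi_n^*(-))$ is additive over direct sums, I may assume $P$ underlies a simple pure Hodge module $M$; and since $f$ has connected fibres the map $\hat\pi_1(X)\to\hat\pi_1(C)$ is surjective, so $H$ has infinite index because $f(H)$ does by (a). Fix an $H$-tower $\pi_n:X_n\to X$ given by open subgroups $\Gamma^n$ with $\bigcap\Gamma^n=H$. Setting $K=f(H)$, compactness of the profinite groups gives $\bigcap_n f(\Gamma^n)=f(H)=K$, so the open subgroups $f(\Gamma^n)$ define a $K$-tower $q_n:C_n\to C$; by (a) this tower is unbounded, of degree $r_n=\deg q_n\to\infty$. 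The map $f$ lifts to $g_n:X_n\to C_n$, and by lemma \ref{lemma:restHtower} the fibre $\pi_n^{-1}(X_y)$ is a disjoint union of $r_n$ copies of an $H_y$-tower cover $(X_y)_n$ of $X_y$, of degree $d_n=\deg\pi_n/r_n$; by (b) these fibrewise towers are unbounded, so $d_n\to\infty$.

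The central difficulty is that $g_n:X_n\to C_n$ is \emph{not} the base change of $f$ along $q_n$: over each $z\in C_n$ the fibre of $g_n$ is a nontrivial cover of the corresponding fibre of $f$, so proper base change does not compute $Rg_{n*}\pi_n^*P$ from $Rf_*P$. I therefore apply Saito's decomposition theorem directly to each projective map $g_n$: writing $Rg_{n*}\pi_n^*M=\bigoplus_j\mathcal{M}^{(n)}_j[-j]$ with $\mathcal{M}^{(n)}_j\in HM(C_n)$ and $\mathcal{P}^{(n)}_j$ the underlying perverse sheaf, one obtains
$$h^i(X_n,\pi_n^*P)=\sum_j h^{i-j}(C_n,\mathcal{P}^{(n)}_j),$$
with nonzero terms only for $i-j\in\{-1,0,1\}$. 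Each $\mathcal{P}^{(n)}_j$ splits as a full-support part $IC(\mathcal{L}^{(n)}_j)$, for a variation $\mathcal{L}^{(n)}_j$ on the open locus $C_n^\circ=q_n^{-1}(C^\circ)$ over which $g_n$ is smooth, plus a skyscraper part supported on $q_n^{-1}(\Sigma)$, where $\Sigma=C\setminus C^\circ$ is the finite critical locus.

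The proof then rests on two growth estimates, both fed by hypothesis (c). Over a general $z$ the fibre of $\mathcal{L}^{(n)}_j$ is a cohomology group in degree $r+j$ (with $r=\dim X-\dim C$) of the smooth fibrewise cover $(X_y)_n$ with its pulled-back Hodge-module coefficients, i.e.\ off-middle for $j\neq0$ and middle for $j=0$; applying the vanishing property of $(X_y,H_y)$ gives $\operatorname{rank}\mathcal{L}^{(n)}_j=o(d_n)$ for $j\neq0$, while for $j=0$ it is $O(d_n)$, the middle Betti number being bounded via lemma \ref{lemma:eulercov}. For the skyscrapers I use that the nearby and vanishing cycle functors $\psi_{g_n},\phi_{g_n}$ preserve Hodge modules (Saito): the skyscraper fibre at a point $z$ over $s\in\Sigma$ is a subquotient of the cohomology of a perverse sheaf underlying a Hodge module on the fibrewise cover of $X_s$, so (c) for $(X_s,H_s)$ bounds its nonzero-degree part by $o(d_n)$. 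Combining with the elementary estimates of lemma \ref{lemma:bnd} on the curve $C_n$ — namely $h^{\pm1}(C_n,IC(\mathcal{L}))\le\operatorname{rank}\mathcal{L}$ and $h^0(C_n,IC(\mathcal{L}))\le(2g(C_n)+s_n)\operatorname{rank}\mathcal{L}$, with $g(C_n),s_n=O(r_n)$ by Riemann--Hurwitz — every contribution to $h^i(X_n,\pi_n^*P)$ with $i\neq0$ is seen to be $o(\deg\pi_n)$: the skyscraper terms and the off-middle full-support terms give $O(r_n)\cdot o(d_n)=o(r_nd_n)=o(\deg\pi_n)$, while the only $O(d_n)$-sized contribution (the $j=0$ full-support class feeding $h^{\pm1}$) is divided by $\deg\pi_n=r_nd_n$ and killed by $r_n\to\infty$.

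The main obstacle, and the step requiring the most care, is precisely the skyscraper analysis at the singular fibres: one must identify the point contributions of the decomposition of $g_n$ with the cohomology of genuine Hodge modules on the singular fibres $X_s$ — this is where the stability of Hodge modules under vanishing cycles is essential — and verify that the induced fibrewise covers over $s$ are $H_s$-towers, so that (c) applies in nonzero perverse degree. Keeping the degree bookkeeping uniform in $s$, using that $d_n=\deg\pi_n/r_n$ is independent of the fibre, is what allows the base estimate (driven by (a), via $r_n\to\infty$) and the fibre estimate (driven by (b) and (c), via $d_n\to\infty$) to combine into the desired $o(\deg\pi_n)$ bound.
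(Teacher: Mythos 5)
Your proposal is correct and follows essentially the same route as the paper's proof: build the base tower $C_n$ from $f(\Gamma^n)$, apply Saito's decomposition theorem to the induced maps $X_n\to C_n$, split each perverse cohomology into a full-support part (rank controlled by fibre cohomology, i.e.\ nearby cycles) and skyscrapers (controlled by vanishing cycles, both preserving $Perv_{HM}$), bound curve cohomology by lemma \ref{lemma:bnd} with $g(C_n)$ estimated via Riemann--Hurwitz, handle the middle-degree rank by the Euler-characteristic trick of lemma \ref{lemma:eulercov}, and kill the residual $j=0$ contribution to $h^{\pm1}$ by the base degree tending to infinity, exactly as in lemmas \ref{lemma:tower}--\ref{lemma:key}. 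The only slips are cosmetic: the paper Stein-factors so that the fibres of $f_n$ are connected (lemma \ref{lemma:tower}), and the uniform bad locus $\Sigma\subset C$ must be the complement of the locus of stratified local triviality adapted to $P$ (lemma \ref{lemma:uniform}), not merely the critical values of $f$, since skyscrapers can occur over regular values where $P$ itself degenerates.
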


Of course (b) is redundant since it is included in (c), but we want to make the assumption  explicit. The proof will be given after establishing some basic lemmas. We fix the following notation for the remainder of this section.
Let  $f:X\to C$ and  H   be as in the  theorem, and let $\pi_n:X_n\to X$ be an $H$-tower.

\begin{lemma}\label{lemma:tower}

We have an unbounded connected tower of  \'etale covers
$$\ldots C_2\to  C_1\to C $$
For  each $n$, we have  a commutative diagram
 $$
\xymatrix{
 X_n\ar[r]_{q_n}\ar[rd]^{f_n}\ar@/^/[rr]^{\pi_n} & X_n'\ar[r]_{p_n'}\ar[d]^{g_n} & X\ar[d]^{f} \\ 
  & C_n\ar[r]^{p_n} & C
}
$$
such that all horizontal maps are \'etale,   the square is cartesian, and each fibre of
$f_n$ is  connected  with the vanishing property with respect to the preimage of $H$.
Furthermore
$$d_n = \deg p_n\to \infty$$
$$e_n= \deg q_n\to \infty$$
%[NEED assumption to ensure $e_n\to \infty$]
\end{lemma}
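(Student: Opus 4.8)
The plan is to build the tower on $C$ by pushing the given $H$-tower forward along $f$, form the fibre products over $C$, and read off the fibrewise behaviour from the restriction lemma. Write the $H$-tower $\pi_n:X_n\to X$ as a descending chain of open subgroups $\Gamma^n\subseteq\hat\pi_1(X)$ with $\bigcap_n\Gamma^n=H$. Since $f$ is proper, surjective and has connected fibres, $f_*:\hat\pi_1(X)\to\hat\pi_1(C)$ is surjective, so $\Xi^n:=f_*(\Gamma^n)$ is a descending, nested chain of open subgroups of $\hat\pi_1(C)$ defining a connected tower $p_n:C_n\to C$. By compactness of the $\Gamma^n$, $\bigcap_n\Xi^n=f_*(\bigcap_n\Gamma^n)=f_*(H)$, which has infinite index by hypothesis (a); a descending chain of open subgroups whose intersection has infinite index must have indices tending to infinity (otherwise the chain would stabilize at an open subgroup), so $d_n=\deg p_n=[\hat\pi_1(C):\Xi^n]\to\infty$. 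Setting $X_n':=X\times_C C_n$ gives the cartesian square, with $p_n'$ étale of degree $d_n$ and $g_n$ the base change of $f$; using surjectivity of $f_*$ again, $X_n'$ is connected and corresponds to $f_*^{-1}(\Xi^n)$. Since $f_*(\Gamma^n)=\Xi^n$ we have $\Gamma^n\subseteq f_*^{-1}(\Xi^n)=\hat\pi_1(X_n')$, which is the inclusion realizing the étale map $q_n:X_n\to X_n'$ with $\pi_n=p_n'\circ q_n$, and I set $f_n=g_n\circ q_n$. Thus $e_n:=\deg q_n=[f_*^{-1}(\Xi^n):\Gamma^n]$ and $\deg\pi_n=d_ne_n$.

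For the degree of $q_n$ I would work inside $K:=\ker f_*$. The identity $f_*^{-1}(\Xi^n)=\Gamma^n K$ gives $e_n=[\Gamma^n K:\Gamma^n]=[K:K\cap\Gamma^n]$, with $\bigcap_n(K\cap\Gamma^n)=K\cap H$. The key geometric input is the homotopy exact sequence of the fibration: for a connected fibre $X_y$ the map $\iota_y:\hat\pi_1(X_y)\to\hat\pi_1(X)$ has image exactly $K$, and $\iota_y^{-1}(H)$ is the group $H_y$ of hypotheses (b) and (c). Hence $[K:K\cap H]=[\hat\pi_1(X_y):H_y]$, which is infinite by (b), and the same chain argument as before gives $e_n=[K:K\cap\Gamma^n]\to\infty$.

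For the fibres of $f_n$ I would restrict the $H$-tower to a fibre. Applying Lemma \ref{lemma:restHtower} to $X_y\hookrightarrow X$ produces an $H_y$-tower $Z_n\to X_y$ with $X_n\times_X X_y$ a disjoint union of copies of the connected cover $Z_n$; a direct fibre-product computation identifies $X_n\times_X X_y$ with $\bigsqcup_{c\in p_n^{-1}(y)}f_n^{-1}(c)$, and each $f_n^{-1}(c)$ is an étale cover of $X_y$ of degree $e_n$. The cover $Z_n$ corresponds to $\iota_y^{-1}(\Gamma^n)$, so $\deg Z_n=[K:K\cap\Gamma^n]=e_n$ via the identification $\mathrm{im}\,\iota_y=K$; matching degrees forces each $f_n^{-1}(c)$ to be a single copy of $Z_n$, hence connected, and shows there are exactly $d_n$ of them. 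Since $Z_n\to X_y$ is an étale cover corresponding to an open subgroup containing $H_y$, hypothesis (c) together with Corollary \ref{cor:etaleVP} shows each fibre has the vanishing property with respect to the preimage of $H$, as required.

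The step I expect to be the main obstacle is the exactness $\mathrm{im}(\hat\pi_1(X_y)\to\hat\pi_1(X))=\ker(\hat\pi_1(X)\to\hat\pi_1(C))$, and in particular its validity for every, possibly singular, fibre. This identity is precisely what simultaneously forces $e_n=\deg Z_n$ (the connectivity of the fibres of $f_n$) and channels hypothesis (b) into the divergence $e_n\to\infty$. For a general smooth fibre it is the usual homotopy exact sequence of a locally trivial fibration; controlling the contribution of the special fibres (for instance ruling out multiple-fibre phenomena that would make $\mathrm{im}\,\iota_y$ a proper subgroup of $K$) is the delicate point, and is where I would concentrate the effort.
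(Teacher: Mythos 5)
The obstacle you flagged at the end is a genuine gap, and it is not one that can be filled: the exactness $\im[\hat\pi_1(X_y)\to\hat\pi_1(X)]=\ker[\hat\pi_1(X)\to\hat\pi_1(C)]$ is simply false under the hypotheses of Theorem \ref{thm:key}, because of multiple-fibre phenomena. Concretely, let $\mu_m$ act on $E\times\tilde C$ ($E$ an elliptic curve, $\tilde C$ a curve) by a translation of order $m$ on $E$ and by an automorphism of $\tilde C$ with a fixed point and quotient $C=\tilde C/\mu_m$ of positive genus. Then $X=(E\times\tilde C)/\mu_m$ is smooth, $f:X\to C$ has connected fibres (with a multiple fibre over each branch point), and hypotheses (a)--(c) hold with $H=\{1\}$; yet for \emph{every} fibre, smooth ones included, $\im \iota_y$ is a proper subgroup of $K=\ker f$. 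If one takes $\Gamma^1=\overline{\pi_1(E\times\tilde C)}$ as a layer of the tower, one checks $\Xi^1=f(\Gamma^1)=\hat\pi_1(C)$, so your $C_1=C$, $X_1'=X$, $q_1=\pi_1$ and $f_1=f\circ\pi_1$, whose general fibre is a disjoint union of $m$ copies of $E$: the connectedness of the fibres of $f_n$ fails outright for your choice of $C_n$. The failure is located exactly where you predicted: $\deg Z_n=[\im\iota_y:\im\iota_y\cap\Gamma^n]$ can be strictly smaller than $e_n=[K:K\cap\Gamma^n]$, so the matching-of-degrees step breaks. (By contrast, your divergence argument for $e_n$ is salvageable without exactness: the inclusion $\im\iota_y\subseteq K$ always holds, hypothesis (b) gives $[\im\iota_y:\im\iota_y\cap H]=\infty$, and this already forces $[K:K\cap H]=\infty$, hence $e_n\to\infty$.)

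The missing idea, and the paper's actual route, is Stein factorization. The paper keeps your tower only as an auxiliary tower $C_n'\to C$ defined by $\Xi^n=f(\Gamma^n)$, and then \emph{defines} $C_n$ by Stein factoring the composite $X_n\to X\times_C C_n'\to C_n'$, so that $f_n:X_n\to C_n$ has connected fibres by construction rather than by a fibration-exactness statement; in the example above this inserts the extra cover $\tilde C\to C$ that your construction misses. The divergence $d_n=\deg p_n\to\infty$ follows because $C_n$ dominates the tower $C_n'$, which is unbounded by (a) (together with the compactness argument $\bigcap\Xi^n=f(H)$ that you gave). The divergence $e_n\to\infty$ is proved fibrewise rather than through $K$: the fibres $X_{n,y_n}\to X_y$ form connected towers corresponding to the subgroups $\iota_y^{-1}(\Gamma^n)$, whose intersection is $\iota_y^{-1}(H)=H_y$ because preimages commute with intersections, so (b) makes these towers unbounded; the vanishing property of the fibres then comes from (c) and Corollary \ref{cor:etaleVP}, as in your last step. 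No identification of $\im\iota_y$ with $\ker f$ is used anywhere, which is why the delicate point you isolated can be sidestepped rather than proved.
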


\begin{proof}
Let $X_n$ correspond to the sequence of open subgroups
$$\hat \pi_1(X)\supseteq \Gamma^1 \supseteq\ldots$$
Then the sequence of subroups
$$\Xi^n = f(\Gamma^n)\subset \hat \pi_1(C)$$
are open because $f:\hat \pi_1(X)\to \hat \pi_1(C)$ is surjective.
Therefore $\Xi^n$ gives rise to an  $f(H)$-tower $C_n'\to C$. 
The  \'etale cover  of  $X$ corresponding to $f^{-1}(\Xi^n)$ can be identified with $X\times_C C_n'$.
The inclusion $\Gamma_n\subseteq f^{-1}(\Xi^n)$, gives rise to an \'etale map $q_n':X_n\to X_n\times_C C_n$. Set $f_n'$ to the composite of the projection
$g_n: X_n'\to C_n$ with $q_n$. Stein factor $f_n'$ to obtain $X_n\to C_n \to C_n'$. 
Setting $X_n' = X\times_C C_n$, we see that $q_n'$ factors
through an \'etale map $q_n: X_n\to X_n'$. Thus we have constructed the diagram above.
By assumption (a) of  theorem \ref{thm:key}, $C_n'\to C$ is unbounded, therefore $C_n\to C$ is unbounded, i.e.
$$d_n = \deg p_n\to \infty$$
Similarly assumption (b) of  theorem \ref{thm:key} implies that the tower $X_{n,y_n}\to X_{y}$ is unbounded for $y\in C$ and a family of points $(y_n)\in \varprojlim p_n^{-1}(y)$.
Therefore
$$e_n= \deg q_n\to \infty$$
 The fibres of $f_n$ have the vanishing property by (c) and corollary \ref{cor:etaleVP}.
%%%%%
\bigskip
%%%
%Let $X_n\xrightarrow{f_n} C_n\xrightarrow{p_n} C$ be the Stein factorization of $f\circ \pi_n:X_n\to C$.
%Then one obtains a diagram as above, where $X_n' = X\times_C C_n$.
%Since $\pi_n$ is \'etale, it follows that $p_n'$ and $q_n$ are \'etale. Since $p_n'$ is \'etale and the square is cartesian,
%we can conclude that $p_n$ is \'etale. 
%
%By  lemma \ref{lemma:restHtower}, 
%the fibre product $X_n\times_X C$ decomposes into a disjoint union of connected \'etale covers $C_n\to C$.
%Furthermore, by  assumption (a) of theorem \ref{thm:key}, $C_n\to C$ is unbounded.
% Therefore
%$$d_n = \deg p_n\to \infty$$
%Similarly
%$$e_n= \deg q_n\to \infty$$
% is a consequence of  assumption (b) of  theorem \ref{thm:key}.The fibres of $f_n$ have the vanishing property by (c) and lemma \ref{lemma:etaleVP}.

% The  tower of \'etale covers
%$$\ldots C_3\to C_2\to  C_1=C $$
%corresponds to the chain
%$$\pi_1(C)\supseteq f_* (\pi_1(X_2))\supseteq f_*(\pi_1(X_3))\ldots $$
%
%The  inclusions
%$$\pi_1(X_n)\subseteq  f_*^{-1}(\pi_1(C_n)) \subseteq \pi_1(X)$$
%gives the factorization
%$$X_n\to X_n'\to X$$
%ETC.
\end{proof}

%ALTERNATE---
%
%\begin{lemma}\label{lemma:alt}
%If $P\in Perv(C)$ is semisimple, then 
% $$h^{-1}(C,P)\le r(P), \, h^1(C,P)\le r(P)$$
% $$h^0(C,P)\le \chi(C, P) + 2r(P)$$
%\end{lemma}
%
%\begin{proof}
% It is enough to treat the the case where $P$ is simple.  it is either a skyscraper sheaf of the form $\Q_x$, or $P= j_*L[1]$, where $L$ is a local system defined on $j:U\to C$.
%In the first case, we see immediately that 
%$$h^{-1}(C,P)= h^1(C,P)=0, h^0(C,P)=1$$
%In the second case,
%$$H^{-1}(C,P) = L_x^{\pi_1(U,x)}$$
%for some $x\in U$.
%Therefore
%$$h^{-1}(C,P)\le r(P)$$
%We have
%$$ h^1(C,P)\le r(P)$$
%by duality. Consequently
% $$h^0(C,P)= \chi(C,P) + h^{-1}(C,P) + h^1(C,P)\le \chi(C, P) + 2r(P)$$
%\end{proof}
%)

We will use the notation from the  lemma \ref{lemma:restHtower} for the remainder of this section. 
Saito's version of the decomposition theorem \cite[cor. 3 of intro.]{saito}  implies that if $P\in Perv_{HM}(X)$, then
\begin{equation}\label{eq:decomp}
\R f_* P = \bigoplus_k {}^p R^k f_* P [-k]
\end{equation}
where
$${}^p R^k f_* P = {}^p \mathcal{H}^k( \R f_* P) \in Perv_{HM}(C)$$
Let us decompose this as
$${}^p R^k f_* P = F^k \oplus S^k$$
where $S^k$ is a finite sum
$$S^k = \bigoplus_{y\in C} S^k_y$$
with $S_y^k$ is supported on $y$,  and $F^k$ has full support.
Define
$$P_n = \pi_n^* P$$
$${}^p R^k f_{n*} P_n = F_n^k \oplus S_n^k$$
$$S^k_n = \bigoplus_{y\in C} S^k_{n,y}$$
as above. Then the decomposition theorem implies:

\begin{lemma}\label{lemma:decomp}
With the above notation,
 $$h^k(X_n, P_n) =  \sum_{i=-1}^1 h^{i}(C_n,  F_n^{k-i} \oplus S_n^{k-i})$$
\end{lemma}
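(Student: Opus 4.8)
The plan is to combine the decomposition theorem \eqref{eq:decomp} with the Leray spectral sequence (which degenerates, so cohomology splits as a direct sum over the perverse degree $k$), and then to compute the cohomology of a perverse sheaf on the \emph{curve} $C_n$ degree by degree. Since $C_n$ is a smooth projective curve, any perverse sheaf on it has cohomology concentrated in degrees $-1,0,1$ only; this is why the inner sum ranges over $i=-1,0,1$.

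First I would apply $\R f_{n*}$ to $P_n=\pi_n^*P$ and use the decomposition theorem for the proper map $f_n:X_n\to C_n$ (which applies because $P_n\in Perv_{HM}(X_n)$ by Saito's compatibility of $\pi_n^*$ with mixed Hodge modules, stated in the excerpt). This gives
$$\R f_{n*}P_n=\bigoplus_k {}^pR^kf_{n*}P_n\,[-k].$$
Taking hypercohomology over $C_n$ and using $H^k(X_n,P_n)=\mathbb{H}^k(C_n,\R f_{n*}P_n)$, the shift $[-k]$ turns the direct sum into
$$H^k(X_n,P_n)=\bigoplus_j H^{k-j}\!\bigl(C_n,{}^pR^{j}f_{n*}P_n\bigr).$$
The key point is that the summand $H^{k-j}(C_n,{}^pR^{j}f_{n*}P_n)$ vanishes unless $k-j\in\{-1,0,1\}$, because the hypercohomology of any object of $Perv(C_n)$ on a curve lives only in those three degrees (a perverse sheaf on a smooth curve is $j_*L[1]$ plus skyscrapers, whose cohomology is concentrated in degrees $-1,0,1$, exactly as recorded in Lemma \ref{lemma:bnd}). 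Reindexing by $i=k-j$, so $j=k-i$, and splitting each ${}^pR^{k-i}f_{n*}P_n=F_n^{k-i}\oplus S_n^{k-i}$ according to the stated decomposition into full-support and skyscraper parts, yields exactly
$$h^k(X_n,P_n)=\sum_{i=-1}^{1}h^{i}\bigl(C_n,\,F_n^{k-i}\oplus S_n^{k-i}\bigr).$$

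I expect the main (and really only) obstacle to be bookkeeping: one must be careful that the dimensions add rather than merely being subquotients, which is guaranteed precisely because the decomposition theorem gives a genuine \emph{direct sum} decomposition, not just a spectral sequence that could have nonzero differentials. Here $h^k=\dim H^k$ is additive over direct sums, so the splitting of hypercohomology is an equality of dimensions, not an inequality. The only fact one needs beyond the decomposition theorem is the elementary cohomological amplitude bound $[-1,1]$ for perverse sheaves on curves, which restricts the inner index to the three values $i=-1,0,1$; everything else is formal reindexing.
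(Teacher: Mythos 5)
Your proof is correct and is essentially the paper's own argument: the paper derives this lemma directly from Saito's decomposition theorem \eqref{eq:decomp} applied to $f_n$, with the inner range $i=-1,0,1$ coming from the cohomological amplitude $[-1,1]$ of perverse sheaves on a curve, exactly as you explain. Your remarks on why the dimensions genuinely add (a direct sum, not merely a degenerate spectral sequence) match the intended reading.
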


We will need the following uniformity property.

\begin{lemma}\label{lemma:uniform}
With the above notation,
 there exists a nonempty Zariski open set $U\subseteq C$, such that each $S_n^k$ is supported on $p_n^{-1}(C-U)$.
 For each pair  $y,y'\in C_n$ lying in the same fibre over $y_0\in C-U$, there exists neighbourhoods $y\in D, y'\in D'$ such that
 there exists compatible homeomorphisms $h:D\cong D'$, $\eta: f_n^{-1} D\cong f_n^{-1} D'$ such that $\eta^*P_n|_{D'} \cong P_n|_D$.
\end{lemma}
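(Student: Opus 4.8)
The plan is to treat the two assertions separately: first the location of the skyscraper summands $S_n^k$, which is really a statement about constructible sheaves, and then the local homeomorphism type of the pair $(f_n,P_n)$ over points of $C-U$, which I will extract from the deck‑transformation action on a Galois refinement of the tower.

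For the support statement I would forget the Hodge structure and argue purely topologically. Choose a Whitney stratification $\mathcal{S}$ of $X$ with respect to which $P$ is constructible. Since $C$ is a curve, generic smoothness of the restriction of $f$ to each of the finitely many strata produces a finite set $\Sigma\subset C$ such that over $U:=C-\Sigma$ the map $f$ is a stratified submersion; by Thom's first isotopy lemma $f\colon f^{-1}(U)\to U$ is then a locally trivial stratified fibre bundle, so the cohomology sheaves of $\R f_*P$ are local systems on $U$. Over a curve this forces each ${}^pR^kf_*P|_U$ to be a shifted local system, with no skyscraper summand supported in $U$; hence the skyscraper part $S^k$ is supported on $\Sigma$. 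I take this $U$ as the open set in the statement, the point being that it depends only on $(f,P)$ and not on $n$. To transport this to $X_n$, note that $p_n\colon C_n\to C$ is an unramified covering, so over a small disc $D\subset p_n^{-1}(U)$ mapping isomorphically to $D_0\subset U$ one has $g_n^{-1}(D)\cong f^{-1}(D_0)\cong D_0\times X_{y_0}$ compatibly with $(p_n')^*P$; because $D_0$ is contractible the further \'etale cover $q_n$ is trivial in the $D_0$–direction, so $f_n^{-1}(D)$ is again a product and $P_n$ is locally constant along the fibres. Thus $\R f_{n*}P_n$ has locally constant cohomology over $p_n^{-1}(U)$, whence ${}^pR^kf_{n*}P_n$ has no skyscraper summand there and $S_n^k$ is supported on $C_n-p_n^{-1}(U)=p_n^{-1}(C-U)$.

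For the uniformity statement I would first reduce to the case that the tower $X_n\to X$ is Galois: replacing each $\Gamma^n\subset\hat\pi_1(X)$ by its normal core yields another $H$-tower, using that $H$ is normal, so $H\subseteq\mathrm{core}(\Gamma^n)\subseteq\Gamma^n$ and the cores still intersect in $H$; for the vanishing estimates this refinement is harmless. Granting this, set $G_n=\mathrm{Gal}(X_n/X)$. By functoriality of Stein factorization $G_n$ acts on $C_n$ over $C$ with $f_n$ equivariant, and since $X_n$ is connected $G_n$ permutes the connected components of $\pi_n^{-1}(X_{y_0})$ transitively, i.e. it acts transitively on $p_n^{-1}(y_0)$. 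Given $y,y'\in p_n^{-1}(y_0)$ choose $\sigma\in G_n$ with $\sigma(y)=y'$, a small disc $D\ni y$, and set $D'=\sigma(D)$, $h=\sigma|_D$, $\eta=\sigma|_{f_n^{-1}(D)}$. Equivariance gives $f_n\eta=hf_n$, and since $\pi_n\circ\sigma=\pi_n$ we get $\sigma^*P_n=\sigma^*\pi_n^*P=\pi_n^*P=P_n$, so $\eta^*(P_n|_{f_n^{-1}(D')})\cong P_n|_{f_n^{-1}(D)}$, as required.

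I expect the main obstacle to be the base‑change step of the first assertion rather than the formal deck‑transformation argument: one must ensure that the single open set $U$ chosen downstairs on $C$ controls every cover in the tower simultaneously, which is precisely why I isolate the contractibility argument showing $q_n$ is trivial in the disc direction. The reduction to a Galois tower is a secondary point to be justified carefully, but once it is in place the transitivity of $G_n$ on the fibres and the identity $\pi_n\circ\sigma=\pi_n$ make the uniformity immediate.
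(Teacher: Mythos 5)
Your first half is sound and essentially the paper's argument: choose a Whitney stratification for $P$, take $U\subseteq C$ over which $f$ is a topologically locally trivial stratified map (the paper states this local triviality directly; your invocation of Thom's first isotopy lemma is the standard way to produce it), and transport the local product structure through the \'etale covers using contractibility of small discs to conclude that $R^kf_{n*}P_n$ is locally constant over $p_n^{-1}(U)$, so no skyscraper summand of ${}^pR^kf_{n*}P_n$ can live there. Your fleshing out of the paper's ``it follows easily'' step, making $U$ visibly independent of $n$, is correct.

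The second half has a genuine gap: the reduction to a Galois tower by passing to normal cores is not harmless. The group theory is fine ($H$ normal gives $H\subseteq\mathrm{core}(\Gamma^n)\subseteq\Gamma^n$, and the cores are open with intersection $H$), but this \emph{replaces} the tower, whereas the lemma is a statement about the given, arbitrary $H$-tower fixed at the start of the section, and the vanishing property in Theorem \ref{thm:key} quantifies over \emph{all} $H$-towers. Vanishing of normalized Betti numbers along the core tower does not imply it along the original one: if $\rho_n\colon X_n''\to X_n$ is the core refinement of degree $m_n=[\Gamma^n:\mathrm{core}(\Gamma^n)]$, the only available comparison is $h^i(X_n,\pi_n^*P)\le h^i(X_n'',\rho_n^*\pi_n^*P)$ (via the summand $\pi_n^*P\subseteq\rho_{n*}\rho_n^*\pi_n^*P$), and after dividing by degrees this picks up the factor $m_n$, which is unbounded in general; there is no L\"uck-type approximation statement for non-normal chains to fall back on. The paper circumvents exactly this: for each fixed $n$ it takes the Galois closure $\tilde X_n\to X$ of the single cover $X_n\to X$, with $G=\mathrm{Gal}(\tilde X_n/X)$ acting equivariantly on a Stein factorization $\tilde f_n\colon\tilde X_n\to\tilde C_n$ with $f_n=\tilde f_n/\mathrm{Gal}(\tilde X_n/X_n)$, and then \emph{descends}: given $y,y'\in p_n^{-1}(y_0)$, lift to $\tilde y$ and $g\tilde y$ in $\tilde C_n$, take a small disc $\tilde D$ on which the projection $p\colon\tilde C_n\to C_n$ is injective, and let $g$ induce the local homeomorphisms $D=p(\tilde D)\cong D'=p(g\tilde D)$ and $f_n^{-1}D\cong f_n^{-1}D'$. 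This is necessary because when $X_n\to X$ is not Galois there is typically no global automorphism of $X_n$ carrying $y$ to $y'$ --- only these local identifications exist. Your deck-transformation computation ($\pi_n\circ\sigma=\pi_n$ forces $\sigma^*P_n\cong P_n$, whence $\eta^*(P_n|_{f_n^{-1}D'})\cong P_n|_{f_n^{-1}D}$) is exactly the right mechanism and transcribes verbatim to the closure, so the repair is to run your argument on $\tilde X_n$ and push it down through $p$, rather than to modify the tower.
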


\begin{proof}
The complex $P$ is constructible with respect to a certain Whitney stratification $\{S_\dt\}$ of $X$ (see \cite{htt,verdier} for the relevant definitions). 
We can choose
a Zariski open set $U\subset C$ over which the  stratification becomes locally trivial over $U$, i.e. such
that  we can find a stratified space $(F, T_\dt)$, an open cover $\{U_i\}$ of $U$, and stratified  homeomorphisms $U_i\times (F, T_\dt) \cong (f^{-1}U_i, f^{-1}U_i\cap S_\dt)$
compatible with projection to $U_i$.
 Then it follows easily
that each $R^k f_* P_n $ is locally constant over $U$. Therefore $S_n^k$ is supported on $p_n^{-1}(C-U)$. 

For the last part, let $\tilde X_n\to X$ be the Galois closure of $X_n\to X$.  This corresponds to an open normal subgroup $\tilde \Gamma^n\subset \hat \pi_1(X)$.
Let $H= Gal(\tilde X_n/X_n) \subset G= Gal(\tilde X_n/X)$ denote
the Galois groups.
We  modify the construction of lemma \ref{lemma:tower} by defining $\tilde f_n:\tilde X_n\to \tilde C_n$ to be  the Stein factorization of the map to the \'etale cover of
$C$ corresponding to $f(\tilde \Gamma^n)$. We can see that $G$ acts to $\tilde f_n$, and $f_n = \tilde f_n/H$ and $f= \tilde f_n/G$. Let $p:\tilde C_n\to C_n$ denote
the projection.
Given $y_0\in C-U$, choose a lift $\tilde y\in \tilde C_n$ and a small disk $\tilde D\subset \tilde C$ around it. 
 Then $y = p(\tilde y) \in p_n^{-1}(y_0)$ and any other point in the fibre is given by $y'=p(g \tilde y)$, for $g\in G$.
 Setting $D= p(\tilde D)$ and $D' = p(g\tilde D)$, we can see that the action of $g$ defines homeomorphisms $D\cong D'$, $ f_n^{-1} D\cong f_n^{-1} D'$ 
 as claimed.
\end{proof}

Given $y\in C_n$, choose a small disk $D= D_y\subset C_n$  centered at $y$, and choose local coordinate  $t$ such that $t=0$
defines $y$.
Let ${}^p\psi_t$ and ${}^p\phi_t$ denote the  nearby and vanishing cycle
functors of \cite{deligne} 
shifted by $[-1]$, either on $D$ or its preimage in $X_n$. 
%The shift is important to keep in mind in the formulas and proofs below.
A theorem of Gabber tells us that these functors take perverse sheaves to perverse sheaves, and in fact 
they preserve $Perv_{HM}$ (essentially by the way   Saito  defines Hodge modules \cite{saito}).

\begin{lemma}\label{lemma:rbnd}
%Viewing ${}^p\psi_t F^k $ and ${}^p\phi_t F^k$ as a vector spaces, and choosing $z\in D_y-{0}$,
 With notation as above, we have that the rank
 $$r(F_n^k) = h^k(X_{n,y}, {}^p\psi_t P_n )$$
 $$\dim S^k_{n,y} \le   h^k(X_{n,y}, {}^p\phi_t P_n)$$
% (SEEM like $h^{k-1}({}^p\phi_t)$, which is OK)
\end{lemma}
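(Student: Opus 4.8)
The plan is to identify the rank of the full-support part $F_n^k$ and the dimension of the skyscraper part $S_{n,y}^k$ with cohomology of the nearby and vanishing cycle functors applied to $P_n$ on the fibre $X_{n,y}$. The geometric picture is that $F_n^k = {}^p R^k f_{n*}P_n$ restricted near a generic point is locally constant, and its rank is computed by the stalk of the direct image sheaf at a point of the open set $U$; on the other hand, $S_{n,y}^k$ measures the extra cohomology concentrated over the special point $y$. The nearby cycle ${}^p\psi_t P_n$ lives on the fibre $X_{n,y}$ and computes exactly the cohomology of a nearby smooth fibre, while the vanishing cycle ${}^p\phi_t P_n$ measures the difference between the cohomology of the special fibre and that of a nearby one.

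First I would fix $y\in C_n$ with local coordinate $t$, a disk $D = D_y$, and work with the restriction of the fibration $f_n$ over $D$. For the rank formula, the key is that over the punctured disk $D^* = D - \{y\}$ the local system underlying $F_n^k$ has fibre $H^k$ of a nearby fibre, and that by the standard compatibility of the nearby cycle functor with proper pushforward (so that $R(f_{n,y})_* {}^p\psi_t P_n \cong {}^p\psi_t Rf_{n*}P_n$ on the point, where $f_{n,y}\colon X_{n,y}\to \{y\}$), we get $H^k(X_{n,y}, {}^p\psi_t P_n) \cong ({}^p\psi_t\, {}^p R^k f_{n*} P_n)$ as the nearby-cycle stalk of the perverse direct image. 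Since ${}^p\psi_t$ applied to a perverse sheaf on the curve records the generic rank of its full-support part (and kills nothing of that part while the skyscraper $S_n^k$ contributes only via its own nearby cycles, which vanish away from the monodromy-invariant data), this stalk computes precisely $r(F_n^k)$. I would use Gabber's theorem, already invoked in the excerpt, to ensure everything stays perverse and the perverse-degree bookkeeping is clean, so that the $k$-th piece on $X_{n,y}$ matches the $k$-th perverse direct image $F_n^k$.

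For the inequality on $\dim S_{n,y}^k$, I would use the compatibility ${}^p\phi_t \, {}^p R^k f_{n*} P_n \cong H^k(X_{n,y}, {}^p\phi_t P_n)$ in the same manner, applying ${}^p\phi_t$ (rather than ${}^p\psi_t$) to the decomposition ${}^p R^k f_{n*}P_n = F_n^k \oplus S_n^k$. The skyscraper summand $S_{n,y}^k$ contributes its full dimension to ${}^p\phi_t({}^p R^k f_{n*}P_n)$ at $y$, whereas the full-support part $F_n^k$ may also contribute a nonnegative amount coming from its local monodromy; hence we only get the inequality $\dim S_{n,y}^k \le H^k(X_{n,y}, {}^p\phi_t P_n)$ rather than equality. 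Concretely, there is an exact sequence relating ${}^p\psi_t$, ${}^p\phi_t$, and the restriction/costalk at $y$ via the variation and canonical maps, and the skyscraper part is a direct summand of the vanishing cycles, which gives the bound.

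The main obstacle will be making the compatibility of nearby and vanishing cycles with the (perverse) direct image $Rf_{n*}$ precise at the level of individual perverse cohomology sheaves ${}^p R^k f_{n*}$, since a priori these functors commute with $Rf_{n*}$ only as an equality of complexes, and one must extract the $k$-th perverse piece correctly; this requires either invoking that ${}^p\psi_t, {}^p\phi_t$ are exact for the perverse t-structure (Gabber) and commute with proper pushforward, or arguing degree by degree using the decomposition \eqref{eq:decomp}. I expect the cleanest route is to apply ${}^p\psi_t$ and ${}^p\phi_t$ directly to \eqref{eq:decomp}, use that proper pushforward commutes with both functors, and then read off the $k$-th summand; the subtlety to handle carefully is precisely why $\dim S_{n,y}^k$ contributes exactly while $F_n^k$ forces an inequality in the vanishing-cycle case but equality of ranks in the nearby-cycle case.
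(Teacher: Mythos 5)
Your proposal is correct, and for the vanishing-cycle inequality it takes a genuinely different route from the paper. For the rank formula the two arguments essentially coincide: the paper identifies the local system underlying $F^k$ as $j^*R^{k-1}f_*P$, so $r(F^k)=h^{k-1}(X_z,P)$ for $z$ near $y$, and then invokes the identification of this with $h^k(X_y,{}^p\psi_t P)$ --- which is the same content as your commutation of ${}^p\psi_t$ with the proper pushforward $\R f_{n*}$ applied to \eqref{eq:decomp}. For the bound on $\dim S^k_{n,y}$, however, the paper never applies ${}^p\phi_t$ to the decomposition: it instead computes stalks, $H^k(X_y,P)\cong H^k(X_z,P)^{inv}\oplus S^k_y$, deduces $\dim S^k_y = h^k(X_y,P)-\dim H^{k+1}(X_y,{}^p\psi_t P)^{inv}$, and then extracts the inequality from the distinguished triangle $\iota^*P[-1]\to {}^p\psi_t P\xrightarrow{can}{}^p\phi_t P\xrightarrow{+1}$ together with the factorization $1-T=var\circ can$, which gives $\ker(can)\subseteq\ker(1-T)$. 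Your argument --- apply ${}^p\phi_t$ to \eqref{eq:decomp}, use commutation with proper pushforward plus Gabber's perverse $t$-exactness so that on the point only the $j=k$ summand contributes in degree $k$, and observe $H^k(X_{n,y},{}^p\phi_t P_n)\cong {}^p\phi_t F^k_n\oplus S^k_{n,y}$ because ${}^p\phi_t$ is the identity on skyscrapers supported at $y$ --- is cleaner and more structural: it exhibits the defect in the inequality explicitly as ${}^p\phi_t F^k_n\cong L^k_z/(L^k_z)^{T}$, the non-invariant part of the local monodromy of $F^k_n$, which the paper's $can$/$var$ argument leaves implicit in a kernel inclusion. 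What your route costs is a slightly heavier citation burden: you must invoke commutation of ${}^p\phi_t$ (not only ${}^p\psi_t$) with proper direct images, standard from \cite{deligne}, and you should record the two local computations on the curve with the perverse shift conventions, namely ${}^p\psi_t$ of a skyscraper at $y$ vanishes (so skyscrapers do not pollute the rank formula) while ${}^p\phi_t(j_*L[1])\cong L_z/L_z^{T}$ and ${}^p\phi_t$ of a skyscraper at $y$ is that skyscraper. Once those are in place, both halves of the lemma follow from the same short computation, so your version is a legitimate, arguably tidier, alternative to the paper's proof.
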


\begin{proof}
For simplicity, we treat the case when $n=1$. The argument is the same in general.
Let $j:U\to C$ be the inclusion of the set defined in lemma \ref{lemma:uniform}.
We can assume that $y\notin U$.
Since $F^k$ has full support,  $F^k = j_*L^k[1]$ for some local system $L^k$ on $U$.
Then  \eqref{eq:decomp}  gives
$$\R f_* P = \bigoplus_k  j_* L^{k+1}[-k] \oplus S^k[-k]$$
Therefore
\begin{equation}\label{eq:Lk}
 L^{k} =  j^*R^{k-1}  f_*P
\end{equation}
%and so
% $${}^pR^kf_* P \cong  j_*j^*R^{k-1} f_*P[1] \oplus S^k[0]$$
%
%%
% Restricting \eqref{eq:decomp} to $j:U\to C$ defined in lemma \ref{lemma:uniform} shows that
% $$\bigoplus L^k[1-k] = \bigoplus_i j^*R^i f_* P [-i]$$
% Therefore 
% $${}^pR^kf_* P \cong  j_*j^*R^{k-1} f_*P[1] \oplus S^k[0]$$
 This implies that
\begin{equation*}
\begin{split}
 r(F^k)  &= h^{k-1}(X_z, P)\\
  &= h^k(X_y, {}^p\psi_t P) 
\end{split}
\end{equation*}
 for some $z\in D-\{0\}$. (To get to the last line, it is  helpful to remember that ${}^p\psi_t$ has a shift by $-1$.)
From \eqref{eq:decomp} and \eqref{eq:Lk}, we obtain
 $$(\R f_*P)_y %= \bigoplus_k (j_*j^* R^{k-1}f_*P)_y[1-k] \oplus \bigoplus_k  S_y^k[k] 
 = \bigoplus_k ((j_*j^* R^k f_*P)_y \oplus S_y^k)[-k]$$
 From which we can conclude that
 $$H^k(X_y, P) = (j_*j^* R^k f_*P)_y \oplus S_y^k = H^k(X_z, P)^{inv} \oplus S_y^k$$
 The first summand on the right is the part of cohomology of the fibre $X_z$ over $z\in D-\{0\}$ invariant under the  local monodromy $T$.
 This can be rewritten more canonically in terms of nearby cycles,  
\begin{equation}\label{eq:dimS}
 \dim S_y^k = h^k(X_y, P) - \dim H^{k+1}({}^p\psi_t P)^{inv}
\end{equation}

 Next consider the diagram \cite{deligne}
 $$
 \xymatrix{
 \iota^* P [-1]\ar[r] & {}^p\psi_t P\ar[r]^{can}\ar[rd]^{1-T} & {}^p\phi_t P\ar[r]^{+1}\ar[d]^{var} &  \\ 
  &  & {}^p\psi_t P & 
}
 $$
 where $\iota: X_y\to X$ is the inclusion. The top line forms a distinguished triangle, therefore we have an exact sequence
 $$H^{k}(X_y,{}^p\phi_t P) \to H^{k}(X_y, P)\to H^{k+1}(X_y,{}^p\psi_t P)\xrightarrow{can} H^{k+1}(X_y,{}^p\phi_t P) $$
 From the diagram above, we have
 $$\ker can \subseteq \ker (1-T)$$
 When combined with \eqref{eq:dimS}, we obtain
$$\dim S_y^k \le   h^k({}^p\phi_t P)$$

\end{proof}

%Define
%$$P_n = \pi_n^* P$$
%$${}^p R^k f_{n*} P_n = F_n^k \oplus S_n^k$$
%as above.

\begin{lemma}\label{lemma:key}
 With the assumptions of  theorem \ref{thm:key} and lemma \ref{lemma:tower} in force,
 $$\frac{h^i(C_n, F_n^k\oplus S_n^k )}{\deg \pi_n}\to 0$$
 if $k<0$ or if $k=0$ and $i\not=0$.
\end{lemma}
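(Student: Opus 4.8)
The plan is to estimate each of the two summands separately, using the cohomology bounds for perverse sheaves on curves from Lemma \ref{lemma:bnd} together with the rank and dimension bounds from Lemma \ref{lemma:rbnd}, and to reduce everything on the fibre side to the vanishing property of the fibres established in Lemma \ref{lemma:tower}. First I would recall that by Lemma \ref{lemma:bnd}, for a perverse sheaf on the curve $C_n$ of genus $g(C_n)$, the cohomology groups $h^i(C_n, F_n^k)$ for $i = \pm 1$ are bounded by $r(F_n^k)$, while $h^0(C_n, F_n^k)$ is bounded by $(2g(C_n) + s(F_n^k)) \, r(F_n^k)$; for the skyscraper part $S_n^k$ only $h^0$ survives, and it equals $\dim S_n^k$. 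So the whole problem splits into controlling the growth of $r(F_n^k)$, of $\dim S_n^k$, and of the genus factor $g(C_n)$, all divided by $\deg \pi_n = d_n e_n$.

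The key step is to translate the rank and support-dimension of the decomposition-theorem pieces on $C_n$ into fibrewise cohomology via Lemma \ref{lemma:rbnd}. That lemma gives $r(F_n^k) = h^k(X_{n,y}, {}^p\psi_t P_n)$ and $\dim S_{n,y}^k \le h^k(X_{n,y}, {}^p\phi_t P_n)$, where the nearby and vanishing cycles ${}^p\psi_t P_n$ and ${}^p\phi_t P_n$ are again perverse sheaves underlying Hodge modules on the fibre $X_{n,y}$ (by Gabber's theorem and Saito's construction). Since each fibre $X_{n,y}$ of $f_n$ has the vanishing property with respect to the preimage of $H$, and the nearby/vanishing cycle complexes are pullbacks of the corresponding objects on a fixed fibre up to the local triviality supplied by Lemma \ref{lemma:uniform}, the fibrewise cohomology in degrees $k \ne 0$ grows sublinearly in $e_n$. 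Concretely, for $k < 0$ the perverse-sheaf bounds force $h^k(X_{n,y}, {}^p\psi_t P_n)/e_n \to 0$ and $h^k(X_{n,y}, {}^p\phi_t P_n)/e_n \to 0$ by the vanishing property applied to the fibre tower, which is unbounded precisely because $e_n \to \infty$.

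Assembling these, for $k < 0$ I would bound
\begin{equation*}
\frac{h^i(C_n, F_n^k \oplus S_n^k)}{d_n e_n} \le \frac{(2 g(C_n) + s(F_n^k)) \, r(F_n^k) + \dim S_n^k}{d_n e_n},
\end{equation*}
and then use that $g(C_n)$ grows at most like $d_n$ (by Riemann--Hurwitz, since $C_n \to C$ is étale so $2 g(C_n) - 2 = d_n (2g-2)$), that the number of singular points $s(F_n^k)$ and the number of fibres over $C - U$ are bounded by a constant times $d_n$ (as $C - U$ is finite and each point has at most $d_n$ preimages), while $r(F_n^k)$ and $\dim S_n^k$ are sums over the $d_n$ fibres of quantities that are $o(e_n)$. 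This yields a bound of the form $o(e_n) \cdot d_n / (d_n e_n) = o(1)$, giving the claim for $k < 0$. The case $k = 0$, $i \ne 0$ is handled by the same estimate, since for $i = \pm 1$ the relevant bounds involve $r(F_n^0)$ alone (without the genus factor), and $r(F_n^0) = h^0(X_{n,y}, {}^p\psi_t P_n)$ is again $o(e_n)$ by the vanishing property in the nonzero degree $k = 0$ of the fibre—wait, here one must be careful: the fibre vanishing property only controls nonzero fibre-degrees, so the genuine subtlety is that $i = \pm 1$ on $C_n$ shifts the relevant fibre degree appropriately. The main obstacle will be precisely this bookkeeping of degrees: ensuring that whenever $i \ne 0$ on the base the contributing fibre cohomology sits in a degree where the fibrewise vanishing property applies, and carefully checking that the excluded case $k = i = 0$ is the only one where a genuine nonzero limit can survive. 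Once the degree matching is pinned down, the genus and support-count growth are benign by Riemann--Hurwitz and finiteness of $C - U$.
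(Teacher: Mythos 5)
Your treatment of the case $k<0$ matches the paper's proof: split off $F_n^k$ from $S_n^k$, bound $h^0(C_n,F_n^k)$ by $(2g(C_n)+s(F_n^k))\,r(F_n^k)$ via Lemma \ref{lemma:bnd}, translate $r(F_n^k)=h^k(Y_n,{}^p\psi_t P_n)$ and $\dim S^k_{n,y}\le h^k(Y_n,{}^p\phi_t P_n)$ via Lemma \ref{lemma:rbnd}, control $2g(C_n)$ and $s(F_n^k)$ by $O(d_n)$ (Riemann--Hurwitz for the \'etale cover $C_n\to C$, and $\#(C-U)\cdot d_n$), and invoke the vanishing property of the fibre tower from Lemma \ref{lemma:tower}, using ${}^p\psi_t P_n\cong \pi_n^*\,{}^p\psi_t P$, to make the fibre terms $o(e_n)$. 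Up to minor bookkeeping (note that $r(F_n^k)$ is the cohomology of a single fibre, not a sum over the $d_n$ fibres; the role of Lemma \ref{lemma:uniform} is precisely to let you use one representative point per fibre of $p_n$ when summing $\dim S^k_{n,y}$) this is the published argument.

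However, the case $k=0$, $i=\pm1$ contains a genuine gap, which you flagged yourself but did not resolve. Here $h^{\pm1}(C_n,F_n^0)\le r(F_n^0)=h^0(Y_n,{}^p\psi_t P_n)$, and this fibre cohomology sits in degree $0$, where the vanishing property says nothing. There is no ``degree shift'' to hunt for: the base degree $i$ and the fibre degree $k$ are independent indices in Lemma \ref{lemma:decomp}, and for $i=\pm1$, $k=0$ the relevant fibre degree really is $0$, where $h^0(Y_n,{}^p\psi_t P_n)/e_n$ typically has a \emph{nonzero} limit. The paper's resolution is a different mechanism: since $h^k(Y_n,{}^p\psi_t P_n)/e_n\to 0$ for $k\neq 0$, one gets
$$\frac{h^0(Y_n,{}^p\psi_t P_n)}{e_n}\ \to\ \frac{\chi(Y_n,{}^p\psi_t P_n)}{e_n}\ =\ \chi(Y_1,{}^p\psi_t P_1)$$
by Lemma \ref{lemma:eulercov} (multiplicativity of Euler characteristics in \'etale covers), so the normalized $h^0$ is merely \emph{bounded}; the ratio then tends to $0$ only because the denominator is $\deg\pi_n=d_ne_n$ with $d_n\to\infty$ --- this is exactly where assumption (a) of Theorem \ref{thm:key} (unboundedness of the base tower) enters, and it is used nowhere in your sketch. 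Without this Euler-characteristic step your argument does not close.
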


\begin{proof}
First let $k<0$.
Choose $U$ as in lemma \ref{lemma:uniform}. Let $\sigma= \#(C-U)$, and
let $C-U=\{y_1,\ldots, y_\sigma\}$.
Pick a  point $y\in C$, and let $y(n)\in C_n$ be a point lying over it.
Choose a small disk $D_{y}$ centered at $y$, with local parameter $t$.
This can be identified with a small disk around $y(n)$. We let $t$ also
denote its pull back to the second disk. Let $Y= X_y$ and $Y_n=X_{n, y(n)}$ denote the fibres. 
With these identifications,
we get a tower of \'etale covers $\pi_n:Y_n\to Y$ each equipped with a perverse sheaf
${}^p\psi_t P_n \cong  \pi_n^*\,{}^p\psi_t P$  \cite[2.1.7.2]{deligne} in $Perv_{HM}$.  
Since we are assuming that $Y$ has the vanishing property with respect to  the preimage of $H$ (assumption (c) of theorem \ref{thm:key}),
\begin{equation}\label{eq:limY}
 \frac{h^k(Y_{n}, \pi_n^*Q)}{e_n}\to 0, \quad Q\in Perv_{HM}(Y)
\end{equation}
 By Riemann-Hurwitz and lemmas \ref{lemma:bnd}, \ref{lemma:uniform} and \ref{lemma:rbnd} 
\begin{equation*}
\begin{split}
  h^{0}(C_n, F_n^k) &\le (2g(C_n)+ s(F_n^k)) r(F^k_n)\\
   &\le  (2g(C_n)-2 +2d_n+ s(F_n^k)) h^k(Y_{n}, {}^p\psi_tP_n)\\
&\le (2g(C) + \sigma)d_n h^k(Y_{n}, {}^p\psi_t P_n)
\end{split}
\end{equation*}

Therefore since $\deg \pi_n= d_ne_n$,
$$0\le \frac{h^{0}(C_n, F_n^k)}{\deg \pi_n}\le (2g + \sigma) \frac{h^k(Y_{n},{}^p\psi_t P_n)}{e_n}$$
The right side converges to $0$ by \eqref{eq:limY}.

%Now choose $y=y_i$. 
For each $y_j \in C-U$, let $p_n^{-1}(y_j) = \{\tilde y_{j,1},\ldots, y_{j, d_n}\}$ denote the fibre, and let $Y_{n,j} = X_{n, \tilde y_{j,1}}$.
Choose small coordinate disks $D_{y_j}$, with local parameters $t_j$, about each point $y_j$.
With similar identifications to those above,  lemmas \ref{lemma:uniform} and
 \ref{lemma:rbnd} shows that 
 \begin{equation*}
\begin{split}
  h^0(C_n, S_n^k) &=  \sum_{j=1}^\sigma\sum_{\ell=1}^d \dim S_{n, \tilde y_{j,\ell}}^k\\
  &\le d_n \sum_{j=1}^\sigma h^k(Y_{n,j},{}^p\phi_{t_j} P_n) 
\end{split}
\end{equation*}
 Therefore
$$\frac{h^0(C_n,  S_n^k )}{\deg \pi_n}\le \sum_{j=1}^\sigma \frac{h^k(Y_{n,j},{}^p\phi_{t_j} P_n)}{e_n}$$
This goes to zero by \eqref{eq:limY}. This proves the lemma when $i=0$ and $k<0$.

Now suppose that  $i=\pm 1$. With the same  set up as in  the first paragraph,  lemmas \ref{lemma:bnd} and \ref{lemma:rbnd} show that
$$\frac{h^i(C_n, F_n^k\oplus S_n^k )}{\deg \pi_n} = \frac{h^i(C_n, F_n^k)}{\deg \pi_n}\le \frac{h^k(Y_{n},{}^p\psi_t P_n)}{d_ne_n}$$
We claim that the expression on the right converges to 0 for all $k\le 0$,
When $k<0$, $h^k(Y_{n},{}^p\psi_t P_n)/{e_n}\to 0$ by \eqref{eq:limY}.  When $k=0$, we see that
$$\frac{h^0(Y_{n},{}^p\psi_t P_n)}{e_n}\to \frac{\chi(Y_n, {}^p\psi_t P_n)}{e_n} = \chi(Y_1, {}^p\psi_t P_1)$$
by lemma \ref{lemma:eulercov}.
Therefore
$$\frac{h^0(Y_{n},{}^p\psi_t P_n)}{d_ne_n}\to 0$$
\end{proof}

\begin{proof}[Proof of theorem \ref{thm:key}]
 Lemmas \ref{lemma:decomp} and \ref{lemma:key} imply that
 $$\frac{h^i(X_n, P_n)}{\deg \pi_n} \to 0$$
 when $i<0$. We deduce the vanishing for $i>0$ by Poincar\'e-Verdier duality. 
\end{proof}

%\begin{cor}
% Suppose that $X$ is a Koll\'ar-hyperbolic surface and suppose that there exists a surjective map $f:X\to C$ to
% a curve of positive genus, then $X$ is $V$-hyperbolic.
%\end{cor}

We define  a generalized Kodaira fibration inductively as a Koll\'ar-hyperbolic variety $X$ which possesses a surjective morphism
$f:X\to C$ to a curve of positive genus, such that normalizations of irreducible components of all fibres are generalized Kodaira fibrations.
Examples include Kodaira fibrations as defined earlier, and surfaces with irrational pencils uniformized by the ball (which exist, see for example \cite{hirzebruch})

\begin{cor}
 A generalized Kodaira fibration is V-hyperbolic.
\end{cor}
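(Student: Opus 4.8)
The plan is to induct on $d=\dim X$. When $d=1$, a generalized Kodaira fibration is a curve admitting a nonconstant map to a curve of positive genus, hence itself has positive genus, and so is V-hyperbolic by the corollary to Proposition \ref{prop:curve}. For the inductive step, let $f:X\to C$ be the structural morphism onto a curve of genus $g>0$. After replacing $C$ by the Stein factorization of $f$ — whose base is again a smooth curve of positive genus, and whose connected fibres are unions of irreducible components of the original fibres, so that the inductive structure is preserved — I may assume $f$ has connected fibres. I would then apply the fibration theorem (Theorem \ref{thm:key}) with $H=1$ the trivial subgroup; its conclusion that $(X,1)$ is V-hyperbolic is precisely the assertion that $X$ is V-hyperbolic.

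Conditions (a) and (b) of Theorem \ref{thm:key} are immediate. For (a), $f(1)=1$ has infinite index in $\hat\pi_1(C)$ since $\hat\pi_1(C)$ is infinite when $g>0$. For (b), since $X$ is Koll\'ar-hyperbolic, Proposition \ref{prop:defkhyp}(b) shows that the image of $\hat\pi_1(X_y^n)$ in $\hat\pi_1(X)$ is infinite for each fibre $X_y$; this image factors through $\hat\pi_1(X_y)$, so the image of $\hat\pi_1(X_y)$ is infinite and $H_y=\ker[\hat\pi_1(X_y)\to\hat\pi_1(X)]$ has infinite index. Everything therefore reduces to condition (c): that each connected fibre $(X_y,H_y)$ has the vanishing property. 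Here I would imitate Proposition \ref{prop:curve}. As $Perv_{HM}(X_y)$ is semisimple, it suffices to treat a simple $P$, whose support is an irreducible subvariety $\bar W\subseteq X_y$; the cohomology of $P$ is computed on $\bar W$, and by Lemma \ref{lemma:restHtower} the $H_y$-tower restricts to a $G$-tower on $\bar W$ with $G=\ker[\hat\pi_1(\bar W)\to\hat\pi_1(X)]$, which has infinite index by Proposition \ref{prop:defkhyp}(b). When $\bar W$ is a whole component $Z_j$ of $X_y$, its normalization $Z_j^n$ is a generalized Kodaira fibration of dimension $<d$, hence V-hyperbolic by the inductive hypothesis, and the desired vanishing follows after passing to the normalization.

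The main obstacle is the case where $\bar W$ is a proper subvariety of a fibre component: such a $\bar W$ is not itself a generalized Kodaira fibration, so the inductive hypothesis does not apply to it directly. To deal with this I would strengthen the inductive statement to assert the vanishing property for $(W,\ker[\hat\pi_1(W)\to\hat\pi_1(X)])$ for \emph{every} irreducible subvariety $W$ of a generalized Kodaira fibration $X$, and run a secondary induction on $\dim W$. If $f|_W$ is constant, then $W$ lies in a single fibre component, hence in a generalized Kodaira fibration of smaller dimension, and one descends; if $f|_W$ is nonconstant, then $W^n\to C$ is again a fibration over a positive-genus curve whose fibres are subvarieties of the $X_y$, and one applies Theorem \ref{thm:key} recursively. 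The base cases $\dim W\le 1$ are supplied by Proposition \ref{prop:curve} together with the infinite-index conclusions of Proposition \ref{prop:defkhyp}(b).

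The delicate technical points I expect to confront are twofold. First, there is a compatibility issue for the subgroups: the inductive V-hyperbolicity of $Z_j^n$ is established with respect to the trivial subgroup, whereas the relevant subgroup inherited from the tower on $X_y$ is the coarser $\ker[\hat\pi_1(Z_j^n)\to\hat\pi_1(X)]$, so I must check that the vanishing property transfers to this coarser subgroup along the recursion. Second, the passage between $\bar W$ and its normalization via the decomposition theorem introduces correction terms supported on still smaller subvarieties, and the secondary induction must be organized so as to absorb them. I expect these bookkeeping issues, rather than any new geometric input, to constitute the essential difficulty.
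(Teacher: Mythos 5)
Your overall route is exactly the paper's: the paper disposes of this corollary in one sentence (``the theorem plus induction on dimension''), and your reduction to connected fibres via Stein factorization, your verification of hypotheses (a) and (b) of theorem \ref{thm:key} with $H=1$, your reduction of hypothesis (c) to simple objects of $Perv_{HM}(X_y)$ supported on irreducible $\bar W\subseteq X_y$ via lemma \ref{lemma:restHtower}, and your use of proposition \ref{prop:curve} at the bottom of the induction are all part of what that sentence compresses. You have also correctly located where the real content lies: hypothesis (c) concerns the full, possibly reducible fibre with the subgroup $H_y=\ker[\hat\pi_1(X_y)\to\hat\pi_1(X)]$, so the bare statement of the corollary must be strengthened before it can be inducted on, and carrying the kernel to the \emph{ambient} $\hat\pi_1(X)$ is the right choice in your nonconstant case: there hypothesis (a) does hold, because $\ker[\hat\pi_1(W^n)\to\hat\pi_1(X)]$ maps into $\ker[\hat\pi_1(C')\to\hat\pi_1(C)]$, which has infinite index since the image of $\hat\pi_1(C')\to\hat\pi_1(C)$ is of finite index in an infinite group (lemma \ref{lemma:genfin}).

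However, the step you defer as ``bookkeeping'' --- transferring the vanishing property to the coarser subgroup --- is a genuine gap, not bookkeeping, and your plan as written does not close it. Concretely: when $\bar W$ is, or lies in, a whole fibre component with normalization $Z^n$, the inductive hypothesis gives the vanishing property of $Z^n$ with respect to subgroups defined relative to $Z^n$ itself, whereas hypothesis (c) demands it with respect to the coarser $K'=\ker[\hat\pi_1(Z^n)\to\hat\pi_1(X)]$. The vanishing property is not monotone in the subgroup: a $K'$-tower has intersection exactly $K'$ and is not a $K$-tower for any $K\subsetneq K'$, so nothing transfers formally. The only available tool is to re-apply theorem \ref{thm:key} to the structural fibration $g:Z^n\to C_Z$ with $H=K'$, and then you must verify its hypothesis (a), namely that $g(K')\subset\hat\pi_1(C_Z)$ has infinite index. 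Unlike your nonconstant case, $Z^n$ sits inside a single fibre of $f$, so $f$ gives no control over $g(K')$, and Koll\'ar-hyperbolicity of $(Z^n,K')$ (which you do get for free) does not obviously imply this index condition; nor can (a) simply be dropped from the theorem, since for a purely vertical tower on a product $F\times C$ the normalized $h^{\pm 1}$ genuinely survive by K\"unneth when $\chi(F)\neq 0$ (there Koll\'ar-hyperbolicity fails as well, which is consistent with conjecture \ref{conj:main} but shows the condition has teeth). So your secondary induction must carry an (a)-type hypothesis at every level and you must prove it is inherited by the kernels $\ker[\hat\pi_1(Z^n)\to\hat\pi_1(X)]$; this is the one piece of actual mathematics your proposal leaves open, and it is the same point the paper's one-line proof leaves implicit. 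By contrast, your second anticipated difficulty, correction terms from the decomposition theorem under normalization, is a non-issue: for the finite birational map $\nu:\bar W^n\to\bar W$ one has $\nu_*IC(L)=IC(L)$ on the nose, with no extra summands, since $\nu_*$ is perverse t-exact and commutes with $i_T^*$ and $i_T^!$ for closed $T\subsetneq \bar W$, so the pushforward admits no subobject or quotient with smaller support.
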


\begin{proof}
 This follows from the theorem plus induction on dimension.
\end{proof}

\end{document}